\pgfplotsset{compat=1.14}
\DeclareMathOperator{\at}{\bigg\vert}
\newcommand{\vb}[1]{\mathbf{#1}}
\newcommand{\bm}[1]{\boldsymbol{#1}}
\DeclareMathOperator{\di}{\mathrm{div}}
\DeclareMathOperator{\Di}{\mathrm{Div}}
\DeclareMathOperator{\sym}{\mathrm{sym}}
\DeclareMathOperator{\skw}{\mathrm{skew}}
\DeclareMathOperator{\Le}{\mathit{L}^2}
\DeclareMathOperator{\Lez}{\mathit{L}^2_0}
\DeclareMathOperator{\Hone}{\mathit{H}^1}
\DeclareMathOperator{\Honez}{\mathit{H}_0^1}
\DeclareMathOperator{\Sym}{\mathrm{Sym}}
\DeclareMathOperator{\so}{\mathfrak{so}}
\DeclareMathOperator{\X}{\mathit{X}}
\DeclareMathOperator{\Po}{\mathit{P}}
\DeclareMathOperator{\C}{\mathit{C}}
\DeclareMathOperator{\Ned}{\mathcal{N}}
\DeclareMathOperator{\RT}{\mathcal{RT}}
\DeclareMathOperator{\BDM}{\mathcal{BDM}}
\DeclareMathOperator{\range}{\mathrm{range}}
\newcommand{\Hd}[1]{\mathit{H}(\mathrm{div}{#1})}
\newcommand{\HD}[1]{\mathit{H}(\mathrm{Div}{#1})}
\newcommand{\HDz}[1]{\mathit{H}_0(\mathrm{Div}{#1})}
\newcommand{\Hdz}[1]{\mathit{H}_0(\mathrm{div}{#1})}
\newcommand{\HDzd}[1]{\overset{\circ}{\mathit{H}}(\mathrm{Div}{#1})}
\newcommand{\Hdzdz}[1]{\overset{\circ}{\mathit{H}_0}(\mathrm{div}{#1})}
\newcommand{\HDzdz}[1]{\overset{\circ}{\mathit{H}_0}(\mathrm{Div}{#1})}
\newcommand{\Hcz}[1]{\mathit{H}_0(\mathrm{curl}{#1})}
\newcommand{\Hc}[1]{\mathit{H}(\mathrm{curl}{#1})}
\newcommand{\HC}[1]{\mathit{H}(\mathrm{Curl}{#1})}
\newcommand{\HCz}[1]{\mathit{H}_0(\mathrm{Curl}{#1})}
\DeclareMathOperator{\spa}{\mathrm{span}}
\DeclareMathOperator{\D}{\mathrm{D}\hspace{-0.1em}}
\DeclareMathOperator{\du}{\D \vb{u}}
\DeclareMathOperator{\curl}{\mathrm{curl}}
\DeclareMathOperator{\cof}{\mathrm{Cof}}
\DeclareMathOperator{\Curl}{\mathrm{Curl}}
\DeclareMathOperator{\Ce}{\mathbb{C}_{\mathrm{e}}}
\DeclareMathOperator{\Cc}{\mathbb{C}_{\mathrm{c}}}
\DeclareMathOperator{\Cm}{\mathbb{C}_{\mathrm{micro}}}
\DeclareMathOperator{\muma}{\mu_{\mathrm{macro}}}
\DeclareMathOperator{\mue}{\mu_{\mathrm{e}}}
\DeclareMathOperator{\mumi}{\mu_{\mathrm{micro}}}
\DeclareMathOperator{\muc}{\mu_{\mathrm{c}}}
\newcommand{\Lc}{L_\mathrm{c}}
\DeclareMathOperator{\one}{\bm{\mathbbm{1}}}
\DeclareMathOperator{\Sy}{\mathbb{S}}
\DeclareMathOperator{\A}{\mathbb{A}}
\newcommand{\RN}[1]{%
	\textup{\uppercase\expandafter{\romannumeral#1}}%
}
\newcommand{\dd}{\mathrm{d}}
\DeclareMathOperator{\Pm}{\bm{P}}
\DeclareMathOperator{\Ds}{\bm{D}}
\newtheorem{theorem}{Theorem}
\newtheorem{lemma}{Lemma}
\newtheorem{corollary}{Corollary}
\newtheorem{remark}{Remark}
\let\@fnsymbol\@arabic
\crefname{Problem}{Problem.}{Problem.}
\title{Primal and mixed finite element formulations for the relaxed micromorphic model}
\author{\normalsize{Adam Sky}\thanks{Corresponding author: Adam Sky, Institute of Structural Mechanics, Statics and Dynamics, Technische Universit\"at Dortmund, August-Schmidt-Str. 8, 44227 Dortmund, Germany, email: adam.sky@tu-dortmund.de}
	, \quad
	\normalsize{Michael Neunteufel}\thanks{Michael Neunteufel, Institute of Analysis and Scientific Computing, Technische Universit\"at Wien, Wiedner Hauptstr. 8-10 , 1040 Wien, Austria, email: michael.neunteufel@tuwien.ac.at}
	, \quad
	\normalsize{Ingo Muench}\thanks{Ingo Muench, Institute of Structural Mechanics, Statics and Dynamics, Technische Universit\"at Dortmund, August-Schmidt-Str. 8, 44227 Dortmund, Germany, email: ingo.muench@tu-dortmund.de}
	, \quad
	\normalsize{Joachim Sch\"oberl}\thanks{Joachim Schöberl, Institute of Analysis and Scientific Computing, Technische Universit\"at Wien, Wiedner Hauptstr. 8-10 , 1040 Wien, Austria, email: joachim.schoeberl@tuwien.ac.at}
	\\
	and \quad
	\normalsize{Patrizio Neff}\thanks{Patrizio Neff,  \ \ Chair for Nonlinear 
		Analysis and Modelling, Faculty of Mathematics, Universit\"{a}t Duisburg-Essen,
		Thea-Leymann Str. 9, 45127 Essen, Germany, email: patrizio.neff@uni-due.de}
}
\begin{document}

\maketitle

\begin{abstract}
The classical Cauchy continuum theory is suitable to model highly homogeneous materials. However, many materials, such as porous media or metamaterials, exhibit a pronounced microstructure. As a result, the classical continuum theory cannot capture their mechanical behaviour without fully resolving the underlying microstructure. In terms of finite element computations, this can be done by modelling the entire body, including every interior cell. The relaxed micromorphic continuum offers an alternative method by instead enriching the kinematics of the mathematical model. The theory introduces a microdistortion field, encompassing nine extra degrees of freedom for each material point. The corresponding elastic energy functional contains the gradient of the displacement field, the microdistortion field and its Curl (the micro-dislocation). Therefore, the natural spaces of the fields are $[\Hone]^3$ for the displacement and $[\Hc{}]^3$ for the microdistortion, leading to unusual finite element formulations.
In this work we describe the construction of appropriate finite elements using N\'ed\'elec and Raviart-Thomas subspaces, encompassing solutions to the orientation problem and the discrete consistent coupling condition. Further, we explore the numerical behaviour of the relaxed micromorphic model for both a primal and a mixed formulation. 
The focus of our benchmarks lies in the influence of the characteristic length $\Lc$ and the correlation to the classical Cauchy continuum theory.
\\
\vspace*{0.25cm}
\\
{\bf{Key words:}}  relaxed micromorphic continuum, \and N\'{e}d\'{e}lec elements, \and Raviart-Thomas elements, \and Piola transformations, \and the orientation problem, \and tetrahedral finite elements, \and consistent coupling condition, \and metamaterials, \and generalized continua.  \\

\end{abstract}

\tableofcontents

\section{Introduction}
A common problem in the computation of materials with a pronounced micro-structure is the internal complexity of the geometry. In order to fully capture the kinematics one might resolve the underlying micro-structure. This can be done either with multi-scale finite element methods \cite{Eidel2018} or by modelling the finite element mesh to fully incorporate the microstructure. In both cases, the computational cost increases, leading to longer computation times and decreasing applicability of such approaches. Alternatively, one may enrich the mathematical model in order to account for the increase in the kinematical complexity. This approach gives rise to generalized continuum theories such as higher order gradient methods \cite{Kirchner2006,Mindlin1964,Neff2009} or micromorphic continua \cite{Neff2007,Steigmann2012,Xiaozhe}. Micromorphic continuum theories extend the kinematics of the material point with additional degrees of freedom, the choice of which defines the specific theory. Common examples are micropolar Cosserat \cite{Jeong2010, Munch2011}, microstretch \cite{Romeo2020}, and microstrain models \cite{Forest2006, Hutter2016}. The latter represent sub-types of the micromorphic continuum. In its most general case, micromorphic continua assume an affine deformable microbody for each material point. As such, this deformation, called here the microdistortion $\Pm$, is fully captured by three-by-three matrices and introduces nine extra degrees of freedom. The full micromorphic continuum, as introduced by Eringen and Mindlin \cite{Mindlin1968,Eringen1999}, incorporates the gradient of the microdistortion $\D \Pm$ into the free energy functional. The resulting hyperstress term is a third order tensor. As such, it is unclear how this term is to be interpreted or applied. 
Typically, micromorphic continuum theories introduce a characteristic length scale parameter $\Lc$, which abstractly relates the dimension of the micro-body to that of the macro-body.
In the full micromorphic model, when the characteristic length $\Lc$ becomes very large, the microdistortion $\Pm$ must become constant in order for the theory to generate finite energies, which may lead to boundary layer problems \cite{Sky2021}.

The relaxed micromorphic continuum theory \cite{Neff2014} takes a different approach by instead incorporating only the Curl of the microdistortion $\Curl \Pm$ into the free energy function. The latter term, known as the micro-dislocation, remains a second order tensor and as such, induces a matrix-valued right-hand-side, known as the micro-moment $\bm{M}$. Further, large characteristic lengths $\Lc$ maintain finite energies \cite{Sky2021, Neff2010Finite}.
The theory aims to capture the mechanical behaviour of both highly homogeneous materials and materials with a pronounced microstructure by governing the relation to the classical Cauchy continuum using the characteristic length $\Lc$ \cite{Barbagallo2017} and shows great promise with respect to applications utilizing metamaterials, such as band-gap materials \cite{Madeo2016,Madeo2018,Agostino2020Band,BARBAGALLO2019148} and shielding against elastic waves \cite{Rizzi_shield, Rizzi2021Wave}. Furthermore, analytical solutions have been derived for bending \cite{Rizzi_bending}, torsion \cite{Rizzi_torsion}, shear \cite{Rizzi_shear}, and extension \cite{Rizzi_extension} kinematics. 
The inclusion of the Curl of microdistortion in the free energy function implies the existence of unique solutions \cite{GNMPR15,Neff2015} in the space $\X = [\Hone]^3 \times [\Hc{}]^3$, as shown in \cref{ssec:primal}.  While the construction of finite elements for the space $\Hone$ is well-known in the field of mechanics, $\Hc{}$-finite elements are commonly used for the Maxwell equations, for example in magnetostatics \cite{Joachim2005}. 
For the construction of finite elements for $\Hc{}$ one may use N\'ed\'elec subspaces \cite{Nedelec1980, Ned2}. The formulation of higher order elements is detailed in \cite{Zaglmayr2006}. 
For large characteristic length values $\Lc$ the computation with the primal formulation ($\mathit{X}^h \subset \mathit{X} = [\Hone]^3 \times [\Hc{}]^3$) becomes unstable \cite{Sky2021}. However, it can be re-stabilised by using a mixed formulation. The latter requires the employment of Raviart-Thomas- \cite{Raviart} or Brezzi-Douglas-Marini elements \cite{BDM85} and fully discontinuous finite elements as per the de Rham diagram \cite{Demkowicz2000, Arnold2021, DEMKOWICZ2005267}. 

In this work we demonstrate the existence and uniqueness of the primal formulation using the Lax-Milgram theorem, \cref{ssec:primal}. Further, we introduce a mixed formulation, which is stable for large characteristic length $\Lc$ values, \cref{ssec:mixed}. 
In \cref{ch:3} we derive the corresponding convergence rates for the discrete spaces. 
The construction of lower order finite elements for both formulations is explained in \cref{ch:4}, with focus on a solution to the orientation problem and application of the discrete consistent coupling condition.
\cref{ch:7} is devoted to numerical benchmarks of the finite element formulations but also features the convergence characteristics of higher order elements using NETGEN/NGSolve \cite{Sch1997,Sch2014}. 
Finally, we present our conclusions and outlook in \cref{ch:8}.

\section{The linear relaxed micromorphic continuum}
The linear relaxed micromorphic continuum \cite{Neff2019,Neff2014,Neff2015} is described by its free energy functional, incorporating the gradient of the displacement field, the microdistortion and its Curl
\begin{align}
    I(\vb{u}, \bm{P}) = \dfrac{1}{2} \int_{\Omega} &\langle \Ce \sym(\D \vb{u} - \bm{P}) , \, \sym(\D \vb{u} - \bm{P}) \rangle
		+  \langle \Cm \sym\bm{P} , \, \sym\bm{P} \rangle \label{eq:1} \\ 
		& + \langle \Cc \skw(\D\vb{u} - \bm{P}) , \, \skw (\D \vb{u} - \bm{P}) \rangle
		+ \muma \Lc^2 \, \| \text{Curl}\bm{P} \|^2 \, \dd X  - \int_{\Omega} \langle \vb{u} , \, \vb{f} \rangle  + \langle \bm{P} , \, \bm{M} \rangle \, \dd X \, , 
		\notag
\end{align}
with $\vb{u} : \Omega \subset \mathbb{R}^3 \to \mathbb{R}^3$ and $\bm{P}: \Omega \subset \mathbb{R}^3 \to \mathbb{R}^{3 \times 3}$ representing the displacement and the non-symmetric microdistortion, respectively. Here, $\Ce$ and $\Cm$ are standard fourth order elasticity tensors and $\Cc$ is a positive semi-definite coupling tensor for (infinitesimal) rotations. The macroscopic shear modulus is denoted by $\muma$ and the parameter $\Lc \geq 0$ represents the characteristic length scale motivated by the geometry of the microstructure. The body forces and micro-moments are denoted with $\vb{f}$ and $\bm{M}$, respectively.
The differential operators are defined as
\begin{align}
    \D \vb{u} = \begin{bmatrix}
		u_{1,1} & u_{1,2} & u_{1,3} \\
		u_{2,1} & u_{2,2} & u_{2,3} \\
		u_{3,1} & u_{3,2} & u_{3,3} 
	\end{bmatrix}
	\, , \quad
	\text{Curl}\bm{P} = \begin{bmatrix}
		(\text{curl} \begin{bmatrix}
			P_{11} & P_{12} & P_{13} 
		\end{bmatrix})^T \\[1ex]
		(\text{curl} \begin{bmatrix}
			P_{21} & P_{22} & P_{23} 
		\end{bmatrix})^T \\[1ex]
		(\text{curl} \begin{bmatrix}
			P_{31} & P_{32} & P_{33} 
		\end{bmatrix})^T
	\end{bmatrix} \, , \quad 
	\text{curl}\vb{v} = \nabla \times \vb{v} \, .
\end{align}
For isotropic materials the material tensors have the following structure
\begin{align}
	\Ce &= 2 \mu_{\textrm{e}} \Sy  + \lambda_{\textrm{e}} \one \otimes \one \, , & \Cm &= 2 \mu_{\textrm{micro}} \Sy + \lambda_{\textrm{micro}} \one \otimes \one  \, , &
	\Cc &= 2 \mu_c \A \, ,
\end{align}
where $\Sy:\mathbb{R}^{3 \times 3} \mapsto \Sym(3)$ and $\A:\mathbb{R}^{3 \times 3} \mapsto \so(3)$ are the fourth order symmetry and anti-symmetry tensors, respectively.
Taking variations with respect to the displacement $\vb{u}$
\begin{align}
    \delta_u I = \int_\Omega \langle \Ce \sym \D \delta  \vb{u} , \, \sym(\D \vb{u} - \Pm) \rangle + \langle \Cc \skw \D \delta  \vb{u} , \, \skw(\D \vb{u} - \Pm) \rangle - \langle \delta \vb{u} ,\, \vb{f} \rangle \, \dd X \, , 
    \label{eq:varu}
\end{align}
and the microdistortion $\bm{P}$
\begin{align}
    \delta_P I = \int_\Omega & - \langle \Ce \sym \delta \Pm , \, \sym(\D \vb{u} - \Pm) \rangle - \langle \Cc \skw \delta \Pm , \, \skw(\D \vb{u} - \Pm) \rangle + \langle \Cm \sym \delta \Pm , \, \sym\Pm \rangle \notag \\ 
    &+ \muma \Lc^2 \, \langle \Curl \delta \Pm , \, \Curl \Pm \rangle -\langle \delta \Pm, \, \bm{M} \rangle \, \dd X \, , 
    \label{eq:varp}
\end{align}
yields the symmetric bilinear form
\begin{align}
\label{eq:bf_primal}
	a(\{\delta \vb{u}, \, \delta \bm{P}\}, \{\vb{u}, \, \bm{P}\}) =   \int_{\Omega} & \langle \Ce \sym(\D \delta \vb{u} - \delta \bm{P}) , \, \sym(\D \vb{u} - \bm{P}) \rangle
	+  \langle \Cm \sym \delta \bm{P} , \, \sym\bm{P} \rangle \notag \\ 
	& + \langle \Cc \skw(\D \delta \vb{u} - \delta \bm{P}) , \, \skw (\D \vb{u} - \bm{P}) \rangle
	+ \muma \, \Lc^2 \, \langle \text{Curl} \delta \bm{P} , \,  \Curl \Pm \rangle \, \dd X \, ,
\end{align}
and the linear form for the load
\begin{align}
\label{eq:rhs_primal}
	l(\{\delta \vb{u}, \, \delta \bm{P}\}) = 
	\int_{\Omega} \langle \delta \vb{u} , \, \vb{f} \rangle  + \langle \delta \bm{P} , \, \bm{M} \rangle \, \dd X  \, .
\end{align}
The corresponding strong form follows from partial integration, see \cref{ap:A}
\begin{subequations}
\begin{align}
	-\Di[\Ce \sym (\D \vb{u} - \bm{P}) + \Cc \skw (\D \vb{u} - \bm{P})] &= \vb{f} && \text{in} \quad \Omega \, , \label{eq:strong_form_du}\\
	-\Ce  \sym (\D \vb{u} - \Pm) - \Cc  \skw(\D \vb{u} - \Pm) + \Cm \sym \Pm + \muma \, \Lc ^ 2  \Curl(\Curl\Pm) &= \bm{M} && \text{in} \quad \Omega \, ,\label{eq:strong_form_dP} \\
	\vb{u} &= \widetilde{\vb{u}} && \text{on} \quad \Gamma_D^u \, , \\
	\Pm \times \, \bm{\nu} &= \widetilde{\Pm} \times \bm{\nu} && \text{on} \quad \Gamma_D^P \, , \\
	[\Ce \sym (\D \vb{u}- \bm{P}) + \Cc \skw (\D \vb{u} - \bm{P})] \, \bm{\nu} &= 0 && \text{on} \quad \Gamma_N^u \, ,\\
	\Curl \Pm \times \, \bm{\nu}  &= 0  && \text{on} \quad \Gamma_N^P \, ,
\end{align}
\end{subequations}
where $\bm{\nu}$ denotes the outer unit normal vector, such that $\Pm \times \, \bm{\nu}$ is the projection to the tangent surface on the boundary. The terms $\widetilde{\vb{u}}$ and $\widetilde{\Pm}$ are the prescribed displacement and microdistortion fields on $\Gamma_D^u$ and $\Gamma_D^P$, respectively, see \cref{fig:domain}.
\begin{figure}
    \centering
    \hspace{2em}
    \definecolor{uuuuuu}{rgb}{0.26666666666666666,0.26666666666666666,0.26666666666666666}
    \definecolor{ududff}{rgb}{0.30196078431372547,0.30196078431372547,1}
    \definecolor{qqqqff}{rgb}{0,0,1}
    \definecolor{xfqqff}{rgb}{0.4980392156862745,0,1}
    \definecolor{qqzzcc}{rgb}{0,0.6,0.8}
    \begin{overpic}[width=0.8\linewidth]{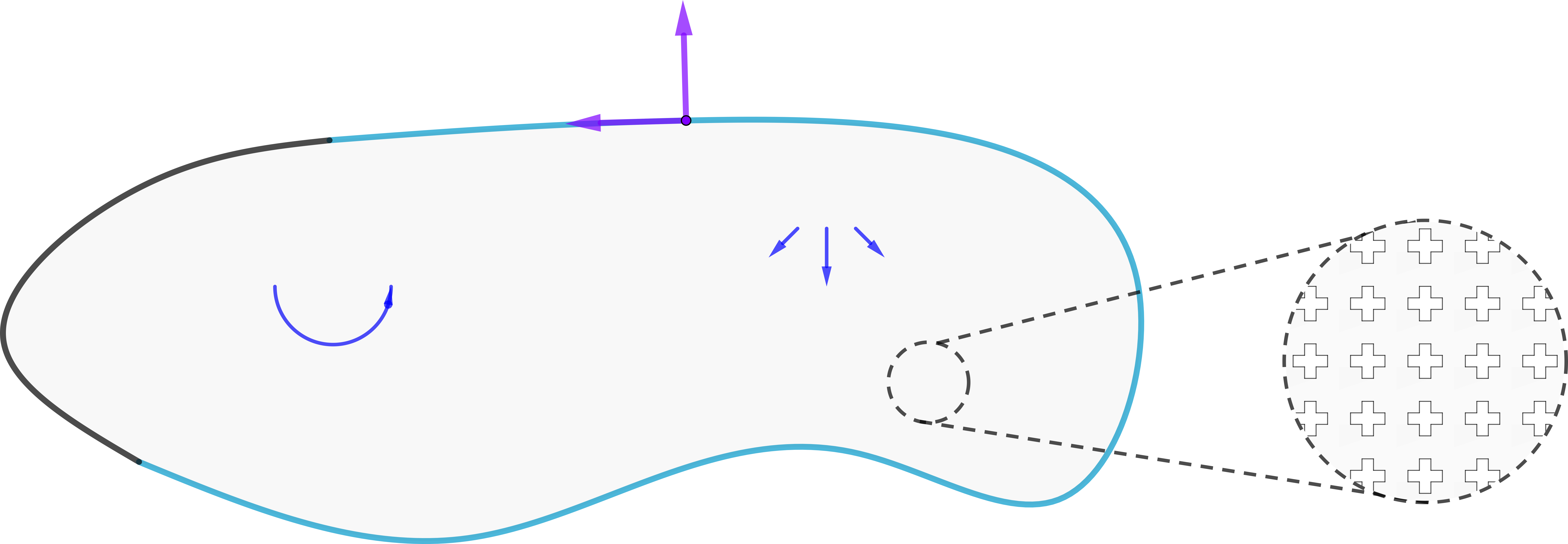}
    \put (36,15) {$\Omega$}
    \put (19.9,15.5) {\color{blue}$\bm{M}$}
    \put (52.4,21.5) {\color{blue}$\vb{f}$}
    \put (-14.5,5.5) {$\Gamma_D = \Gamma_D^u = \Gamma_D^P$}
    \put (-14.5,29.5) {$\Gamma_D^u: \; \vb{u} = \widetilde{\vb{u}}$}
    \put (-14.5,25.5) {$\Gamma_D^P: \; \Pm \times \, \bm{\nu} = \D \widetilde{\vb{u}} \times \bm{\nu}$}
    \put (41.5,30.5) {\color{xfqqff}$\bm{\nu}$}
    \put (38.5,24.9) {\color{xfqqff}$\bm{\tau}$}
    \put (66,26) {\color{qqzzcc}$\Gamma_N = \Gamma_N^u = \Gamma_N^P$}
\end{overpic}
\caption{Depiction of the reference domain $\Omega$ with an underlying micro-structure, the Dirichlet boundary $\Gamma_D$ according to the consistent coupling condition and the Neumann boundaries $\Gamma_N^u$ and $\Gamma_N^P$ for the displacement and the microdistortion, respectively. The vector $\bm{\nu}$ is the unit normal vector on the boundary. The unit tangent vectors are illustrated with $\bm{\tau}$. The body force is given by $\vb{f}$ and the micro-moment by $\bm{M}$.}
    \label{fig:domain}
\end{figure}

From a physical point of view, it is impossible to control the micro-movements of the material point on the Dirichlet boundary without also controlling the displacement. Consequently, the relaxed micromorphic theory introduces the so called \textbf{consistent coupling condition} \cite{dagostino2021consistent}
\begin{align}
    &\bm{P} \times \bm{\nu} = \D \widetilde{\vb{u}} \times \bm{\nu} \quad \text{on} \quad \Gamma_D^P \, ,
\end{align}
where the prescribed displacement $\widetilde{\vb{u}}$ on the boundary automatically prescribes the tangential component of the microdistortion $\Pm$ on the same boundary, effectively 
inducing the definition $\Gamma_D = \Gamma_D^P = \Gamma_D^u$. 

\section{Solvability and limit problems} \label{ch:2}

\subsection{Continuous case} 
In this section existence and uniqueness of the weak formulation of the relaxed micromorphic continuum for both a primal and a mixed method is discussed. 
In order to simplify the proof, homogeneous boundary conditions are assumed on the entire boundary, $\Gamma^u_D = \partial \Omega$. The proof can be easily adjusted for mixed or inhomogeneous boundary conditions as long as the Dirichlet boundary does not vanish $|\Gamma^u_D| > 0$.\\

For the following formulations, we define corresponding Hilbert spaces and their particular norms
\begin{subequations}
\begin{align}
	\Le(\Omega) &=  \{ u: \Omega \mapsto \mathbb{R} \, | \,  \|u\|_{\Le}^2 < \infty \} \, , && \|u \|_{\Le}^2 = \int_{\Omega} \|u\|^2 \, \dd X \,,\\
	\Lez(\Omega) &=  \left \{ u \in \Le(\Omega) \, | \, \int_{\Omega} u \, \dd X = 0 \right \} \, , \\[1ex]
	\Hone(\Omega) &= \{ u \in \Le(\Omega) \, | \,  \nabla u \in [\Le(\Omega)]^3 \} \, , && \|u \|_{\Hone}^2 = \|u\|_{\Le}^2 + \| \nabla u \|_{\Le}^2 \,,\\[2ex]
	\Honez(\Omega) &= \{ u \in \Hone(\Omega) \, | \, u=0 \text{ on }\partial \Omega \} \, ,\\[2ex]
	\Hc{,\, \Omega} &= \{ \vb{p} \in [\Le(\Omega)]^3 \, | \curl\vb{p} \in [\Le(\Omega)]^3   \} \, , && \| \vb{p} \|_{\Hc{}}^2 = \| \vb{p} \|_{\Le}^2 + \| \curl\vb{p} \|_{\Le}^2 \, ,\\[2ex]
	\Hcz{, \, \Omega} &= \{ \vb{p} \in \Hc{, \, \Omega} \, | \,  \vb{p} \times \bm{\nu}  = \vb{0}  \text{ on }\partial \Omega \} \, , \\[2ex]
	\Hd{,\, \Omega} &= \{ \vb{p} \in [\Le(\Omega)]^3 \, | \di\vb{p} \in \Le(\Omega)   \} \, , && \| \vb{p} \|_{\Hd{}}^2 = \| \vb{p} \|_{\Le}^2 + \| \di\vb{p} \|_{\Le}^2 \, ,\\[2ex]
	\Hdz{, \, \Omega} &= \{ \vb{p} \in \Hd{, \, \Omega} \, | \, \langle \vb{p} , \, \bm{\nu} \rangle = 0  \text{ on }\partial \Omega \} \, ,
\end{align}
\end{subequations}
from which we derive the corresponding spaces for our higher dimensional problem
\begin{align}
    &\HC{,\Omega} = [\Hc{,\Omega}]^3 \, , && \HD{,\Omega} = [\Hd{,\Omega}]^3 \, ,
\end{align}
where both spaces are to be understood as row-wise matrices of the vectorial spaces.

\begin{remark}
    In the following sections we assume a contractible domain $\Omega$ for all proofs.
\end{remark}

\subsubsection{Primal form}\label{ssec:primal}
The proof of existence and uniqueness has already been given in \cite{Neff2014} and subsequently generalized to the dynamic setting in \cite{GNMPR15}. For the reader's convenience and for later use we present a proof based on the Lax--Milgram theorem and Korn's inequalities for incompatible fields \cite{LewintanInc,LewintanInc2,Lewintan2021K,Starke2,NEFF20151267,Starke1,Neff2012}, similar to \cite{Neff2015}, where only the case $\muc=0$ together with $\Gamma_D^P=\partial\Omega$ has been considered. Therefore, we define the product space $\X = [\Honez(\Omega)]^3\times \HC{,\Omega}$ with its standard product norm $\|\{\vb{u},\bm{P}\}\|^2_{\X}=\|\vb{u}\|^2_{\Hone}+\|\bm{P}\|^2_{\HC{}}$ and examine the problem: Find $\{\vb{u},\bm{P}\}\in \X$ such that 
\begin{align}
\label{eq:primal_problem}
    a(\{\vb{u},\bm{P}\},\{\delta\vb{u},\delta\bm{P}\}) = l(\{\delta\vb{u},\delta\bm{P}\}) \quad  \forall\, \{\delta\vb{u},\delta\bm{P}\}\in\X,
\end{align}
where $a(\cdot,\cdot)$ and $l(\cdot)$ are defined as in \eqref{eq:bf_primal} and \eqref{eq:rhs_primal}, respectively.
\begin{theorem}
	\label{thm:ex_un_primal}
	Let $\muma,\Lc>0$, $\vb{f}\in [\Le(\Omega)]^3,\bm{M}\in[\Le(\Omega)]^{3\times 3}$, and $\Ce,\Cm$ be positive definite on $\Sym(3)$. Further, assume that $\Cc$ is positive definite on $\so(3)$, or positive semi-definite and $\Gamma_D^P\neq\emptyset$. Then Problem~\ref{eq:primal_problem} is uniquely solvable and there holds the stability estimate
	\begin{align*}
	\|\{\vb{u},\bm{P}\}\|_{\X}\leq c \left(\|\vb{f}\|_{\Le}+\|\bm{M}\|_{\Le}\right),\qquad c=c(\Ce,\Cc,\Cm,\muma,\Lc).
	\end{align*}
\end{theorem}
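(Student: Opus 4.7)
The plan is to invoke the Lax--Milgram theorem on the Hilbert space $\X=[\Honez(\Omega)]^3\times \HC{,\Omega}$ (with the tangential trace of $\Pm$ vanishing on $\Gamma_D^P$ understood as an essential condition whenever $|\Gamma_D^P|>0$). Continuity of $l$ is immediate from Cauchy--Schwarz. Continuity of $a(\cdot,\cdot)$ follows by bounding each summand via Cauchy--Schwarz, the boundedness of the tensors $\Ce,\Cm,\Cc$, and the triangle inequalities $\|\sym(\D\vb{u}-\Pm)\|_{\Le}\leq\|\D\vb{u}\|_{\Le}+\|\Pm\|_{\Le}$ and $\|\skw(\D\vb{u}-\Pm)\|_{\Le}\leq\|\D\vb{u}\|_{\Le}+\|\Pm\|_{\Le}$. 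Once coercivity is in place, the stability estimate is just the Lax--Milgram conclusion $\|\{\vb{u},\Pm\}\|_{\X}\leq c^{-1}(\|\vb{f}\|_{\Le}+\|\bm{M}\|_{\Le})$.

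The core of the argument is coercivity. The positive definiteness of $\Ce,\Cm$ on $\Sym(3)$ and the explicit $\muma\Lc^2\|\Curl\Pm\|^2$ term show that $a(\{\vb{u},\Pm\},\{\vb{u},\Pm\})$ controls, up to a constant,
\[
\|\sym(\D\vb{u}-\Pm)\|_{\Le}^2 + \|\sym\Pm\|_{\Le}^2 + \|\Curl\Pm\|_{\Le}^2,
\]
together with $\|\skw(\D\vb{u}-\Pm)\|_{\Le}^2$ whenever $\Cc$ is positive definite on $\so(3)$. The real work is to upgrade this to the full $\X$-norm, and it is exactly here that the dichotomy in the hypothesis enters.

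In the case $\Cc$ positive definite on $\so(3)$, I would exploit $\vb{u}\in[\Honez]^3$, which via integration by parts gives $\int_\Omega\D\vb{u}\,\dd X=0$, hence $\int_\Omega\skw\Pm\,\dd X=-\int_\Omega\skw(\D\vb{u}-\Pm)\,\dd X$. Setting $\bar{\vb{A}}:=\frac{1}{|\Omega|}\int_\Omega\skw\Pm\,\dd X$, Cauchy--Schwarz yields $\|\bar{\vb{A}}\|_{\Le}\leq\|\skw(\D\vb{u}-\Pm)\|_{\Le}$. Since $\bar{\vb{A}}$ is constant and skew, $\sym(\Pm-\bar{\vb{A}})=\sym\Pm$ and $\Curl(\Pm-\bar{\vb{A}})=\Curl\Pm$; the incompatible Korn inequality of Neff--Lewintan, whose kernel of constant skew matrices is killed by the zero-mean normalisation, then gives $\|\Pm-\bar{\vb{A}}\|_{\Le}^2\leq c(\|\sym\Pm\|_{\Le}^2+\|\Curl\Pm\|_{\Le}^2)$. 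A triangle inequality produces $\|\Pm\|_{\Le}$, after which $\|\D\vb{u}\|_{\Le}\leq\|\D\vb{u}-\Pm\|_{\Le}+\|\Pm\|_{\Le}$ and the Poincaré inequality on $[\Honez]^3$ complete the bound.

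In the case $\Cc$ merely positive semi-definite with $|\Gamma_D^P|>0$, the essential condition $\Pm\times\bm{\nu}=\vb{0}$ on $\Gamma_D^P$ already excludes the constant-skew kernel of $(\sym,\Curl)$, so the version of the incompatible Korn inequality with partial tangential boundary data directly yields $\|\Pm\|_{\Le}^2\leq c(\|\sym\Pm\|_{\Le}^2+\|\Curl\Pm\|_{\Le}^2)$; the displacement is then recovered from $\sym\D\vb{u}=\sym(\D\vb{u}-\Pm)+\sym\Pm$, the classical first Korn inequality on $[\Honez]^3$, and Poincaré. The genuinely hard ingredient in both cases is the incompatible Korn inequality of \cite{LewintanInc,LewintanInc2,Lewintan2021K,NEFF20151267}, which I would invoke as a black box; everything else is routine manipulation.
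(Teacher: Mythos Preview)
Your proof is correct, and the second case (semi-definite $\Cc$ with $\Gamma_D^P\neq\emptyset$) matches the paper's route almost verbatim. In the first case (positive definite $\Cc$), however, you and the paper diverge. The paper never invokes the incompatible Korn inequality there: it expands $\|\sym(\D\vb{u}-\Pm)\|_{\Le}^2$ and $\|\skw(\D\vb{u}-\Pm)\|_{\Le}^2$ via Young's inequality, applies the \emph{classical} Korn inequality to $\|\sym\D\vb{u}\|_{\Le}$, and then balances the Young parameters to make all coefficients positive. Your route instead exploits $\int_\Omega\D\vb{u}\,\dd X=0$ for $\vb{u}\in[\Honez]^3$ to bound the mean of $\skw\Pm$ by $\|\skw(\D\vb{u}-\Pm)\|_{\Le}$, and then appeals to the \emph{quotient} form of the incompatible Korn inequality (modulo constant skew matrices). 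Your argument is structurally cleaner and unifies the two cases under one mechanism, but it trades the paper's elementary Young/classical-Korn balancing for the heavier Lewintan--Neff machinery even when $\Cc$ is positive definite; the paper's point in separating the cases is precisely that this stronger tool is avoidable there. Note also that the quotient version you need (no tangential trace condition, normalised by subtracting the mean skew part) is not the form \eqref{eq:control_P_hc} actually quoted in the paper, though it is available in the same body of literature.
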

\begin{proof}
	We show continuity and coercivity of \eqref{eq:bf_primal}. During the proof we denote with $c>0$ a generic constant which may change from line to line. Continuity follows immediately with Cauchy-Schwarz inequality
	\begin{align*}
	|a(\{\vb{u},\bm{P}\},\{\delta\vb{u},\delta\bm{P}\})| &\leq c\Big((\|\vb{u}\|_{\Hone}+\|\bm{P}\|_{\Le})(\|\delta\vb{u}\|_{\Hone}+\|\delta\bm{P}\|_{\Le}) + \|\sym\bm{P}\|_{\Le}\|\sym\delta\bm{P}\|_{\Le} + \|\Curl\bm{P}\|_{\Le}\|\Curl\delta\bm{P}\|_{\Le}\Big)\\
	&\leq c\Big((\|\vb{u}\|_{\Hone}+\|\bm{P}\|_{\HC{}})(\|\delta\vb{u}\|_{\Hone}+\|\delta\bm{P}\|_{\HC{}}) + \|\bm{P}\|_{\HC{}}\|\delta\bm{P}\|_{\HC{}}\Big)\\
	&\leq c (\|\vb{u}\|_{\Hone}+\|\bm{P}\|_{\HC{}})(\|\delta\vb{u}\|_{\Hone}+\|\delta\bm{P}\|_{\HC{}})\leq c\|\{\vb{u},\bm{P}\}\|_{X}\|\{\delta\vb{u},\delta\bm{P}\}\|_{X}.
	\end{align*}
	For the coercivity we first consider the case of positive definiteness of the tensors
	\begin{align*}
	\langle \Ce \bm{S},\bm{S}\rangle \geq c_\mathrm{e}\|\bm{S}\|^2  \quad  \forall \bm{S}\in\Sym(3) \, ,
	&&\langle \Cm \bm{S},\bm{S}\rangle \geq c_\mathrm{micro}\|\bm{S}\|^2\quad\forall \bm{S}\in\Sym(3) \, , 
	&&\langle \Cc \bm{A},\bm{A}\rangle \geq c_\mathrm{c}\|\bm{A}\|^2\quad\forall \bm{A}\in \so(3),
	\end{align*}
	to deduce with $c_1=\min\{c_\mathrm{e},c_\mathrm{c}\}$
	\begin{align*}
	a(\{\vb{u},\bm{P}\},\{\vb{u},\bm{P}\}) &\geq c_1(\|\sym(\D \vb{u} - \bm{P})\|_{\Le}^2+\|\skw(\D \vb{u} - \bm{P})\|_{\Le}^2) + c_\mathrm{micro} \|\sym\bm{P}\|_{\Le}^2+\muma\Lc^2\|\Curl \bm{P}\|_{\Le}^2.
	\end{align*}
	First, we consider the symmetric terms. Analogously to \cite{Sky2021}, with Young's inequality\footnote{Young: $a\, b\leq \frac{\varepsilon a^2}{2}+\frac{b^2}{2\varepsilon},\quad\forall\, a,b\in \mathbb{R}$, $\varepsilon>0 \, .$} and Korn's inequality\footnote{Korn: $\|\sym \D \vb{u}\|_{\Le}\geq c_K \| \D \vb{u}\|_{\Le}\quad \forall\, \vb{u}\in [\Honez(\Omega)]^3 \, .$} \cite{neff_2002} we obtain
	\begin{align}
	   c_1\|\sym(\D \vb{u} - \bm{P})\|_{\Le}^2 + c_\mathrm{micro} \|\sym\bm{P}\|_{\Le}^2 &=c_1\left(\|\sym\D\vb{u}\|^2_{\Le} - 2\langle \sym\D\vb{u},\sym\Pm\rangle_{\Le}+\|\sym\bm{P}\|^2_{\Le}\right) + c_\mathrm{micro} \|\sym\bm{P}\|_{\Le}^2 \notag\\
	   &\hspace{-0.7em} \overset{\text{Young}}{\geq} c_1(1-\varepsilon)\|\sym\D\vb{u}\|^2_{\Le} + (c_1-\frac{c_1}{\varepsilon}+c_\mathrm{micro})\|\sym \bm{P}\|_{\Le}^2 \notag \\
	   &\hspace{-0.5em}\overset{\text{Korn}}{\geq} c_Kc_1(1-\varepsilon)\|\D\vb{u}\|^2_{\Le} + (c_1-\frac{c_1}{\varepsilon}+c_\mathrm{micro})\|\sym \bm{P}\|_{\Le}^2.
	\end{align}
	We can choose $\varepsilon=\frac{1}{2}(1+\frac{c_1}{c_1+c_\mathrm{micro}})$ such that both terms are positive. Next, we estimate the skew-symmetric part
	\begin{align}
	    \|\skw(\D \vb{u} - \bm{P})\|_{\Le}^2 &=\|\skw\D\vb{u}\|^2_{\Le} - 2\langle \skw\D\vb{u},\skw\bm{P}\rangle_{\Le}+\|\skw\bm{P}\|^2_{\Le} \notag\\
	    &\hspace{-0.7em} \overset{\text{Young}}{\geq} (1-\frac{1}{\delta})\|\skw\D\vb{u}\|^2_{\Le} + (1-\delta)\|\skw \bm{P}\|_{\Le}^2 \notag\\
	    &\hspace{-0.9em}\overset{1-\frac{1}{\delta}<0}{\geq} (1-\frac{1}{\delta})\|\D\vb{u}\|^2_{\Le} + (1-\delta)\|\skw \bm{P}\|_{\Le}^2.
	\end{align}
	With $0<\delta < 1$ only the second term is positive. By combining both estimates we conclude by choosing $\delta = \frac{1}{2}(1+\frac{1}{1+c_K(1-\varepsilon)})$
	\begin{align}
	a(\{\vb{u},\bm{P}\},\{\vb{u},\bm{P}\}) &\geq c_1(c_K-c_K\varepsilon+1-\frac{1}{\delta})\|\D\vb{u}\|^2_{\Le} + (c_1-\frac{c_1}{\varepsilon}+c_m)\|\sym \bm{P}\|_{\Le}^2 + (1-\delta)\|\skw \bm{P}\|_{\Le}^2+c\|\Curl \bm{P}\|_{\Le}^2 \notag\\
	&\geq c (\|\D\vb{u}\|^2_{\Le} + \|\bm{P}\|^2_{\HC{}}).
	\end{align}
	Thus, with Poincar\`e-Friedrich's inequality\footnote{Poincar\`e-Friedrich: $\|\vb{u}\|_{\Le}\leq c_F\|\D \vb{u}\|_{\Le}\quad\forall\,\vb{u}\in [\Honez(\Omega)]^3 \, .$} we obtain the coercivity 
	\begin{align}
	    a(\{\vb{u},\bm{P}\},\{\vb{u},\bm{P}\}) \geq c(\|\D \vb{u}\|^2_{\Le} + \|\bm{P}\|^2_{\HC{}})\geq C\|\{\vb{u},\bm{P}\}\|_{\X}^2.
	\end{align}
	
	In the case of a positive semi-definite $\Cc$ (even $\Cc = 0$ is allowed: absence of rotational coupling) together with $\Gamma_D^P\neq\emptyset$ we must use the generalized Korn's inequality for incompatible fields, cf. \cite{GNMPR15,LewintanInc,LewintanInc2,Starke1,NEFF20151267,Starke1,Neff2012}
	\begin{align}
	\label{eq:control_P_hc}
	    \|\sym \bm{P}\|^2_{\Le} + \|\Curl\bm{P}\|^2_{\Le}\geq c  \|\bm{P}\|^2_{\HC{}},\qquad \forall\, \bm{P} \in \HC{,\Omega} \text{ with } \bm{P}\times\bm{\nu} = \vb{0} \text{ on }\Gamma_D^P
	\end{align}
	and estimate
	\begin{align}
	a(\{\vb{u},\bm{P}\},\{\vb{u},\bm{P}\}) &\geq c_\mathrm{e}\|\sym(\D \vb{u} - \bm{P})\|_{\Le}^2+ c_\mathrm{micro} \|\sym\bm{P}\|_{\Le}^2+\muma\Lc^2\|\Curl \bm{P}\|_{\Le}^2 \notag\\
	&\hspace{-1.8em} \overset{\text{Young, \eqref{eq:control_P_hc}}}{\geq} (1-\varepsilon)\|\sym\D\vb{u}\|^2_{\Le} + (1-\frac{1}{\varepsilon})\|\sym \bm{P}\|^2_{\Le} + \frac{c_\mathrm{micro}}{2}\|\sym\bm{P}\|_{\Le}^2 + c\|\bm{P}\|_{\HC{}}^2 \notag\\
	&\hspace{-0.5em}\overset{\text{Korn}}{\geq}c_K(1-\varepsilon)\|\D \vb{u}\|^2_{\Le}+(1-\frac{1}{\varepsilon}+\frac{c_\mathrm{micro}}{2})\|\sym\bm{P}\|_{\Le}^2 + c\|\bm{P}\|_{\HC{}}^2 \notag\\
	&\geq c(\|\D \vb{u}\|^2_{\Le}+\|\bm{P}\|_{\HC{}}^2).
	\end{align}
	
	Continuity of the right-hand side $l$ is obvious and thus we can apply the Lax--Milgram theorem finishing the proof.
\end{proof}

\begin{remark}
	The proof above fails if we consider $\bm{P}\in [\Hone(\Omega)]^{3\times 3}$ instead of $\HC{,\Omega}$. In fact, $a(\cdot,\cdot)$ is then no longer coercive as we cannot bound $\|\bm{P}\|_{\Le}^2+\|\Curl\bm{P}\|_{\Le}^2\geq c\|\bm{P}\|_{\Hone}^2$ uniformly in $\bm{P}$. In \cite{Sky2021} and \cite{SSSN21} it is demonstrated that already in two spatial dimensions a loss of optimal convergence rates is obtained when using  $\bm{P}\in [\Hone(\Omega)]^{2\times 2}$.
\end{remark}

\begin{remark}
    Note that in \cite{Reg,Reg2} it is shown that for certain sufficiently smooth data, the regularity of $\Pm$ can yet be improved to $\Pm \in [\Hone(\Omega)]^{3\times 3}$.
\end{remark}

\subsubsection{Limit of vanishing characteristic length $\Lc\to 0$} \label{sec:lczero}
In the limit $\Lc\to 0$ the bilinear form \eqref{eq:bf_primal} reduces to
\begin{align}
\label{eq:bf_primal_lc0}
	a(\{\delta \vb{u}, \, \delta \bm{P}\}, \{\vb{u}, \, \bm{P}\}) =   \int_{\Omega} & \langle \Ce \sym(\D \delta \vb{u} - \delta \bm{P}) , \, \sym(\D \vb{u} - \bm{P}) \rangle
	+  \langle \Cm \sym \delta \bm{P} , \, \sym\bm{P} \rangle \notag \\ 
	& + \langle \Cc \skw(\D\delta \vb{u} - \delta \bm{P}) , \, \skw (\D \vb{u} - \bm{P}) \rangle
	 \, \dd X \,.
\end{align}
Therefore, we lose control over the Curl of $\bm{P}$ yielding a loss of regularity for $\bm{P}$ from $\HC{,\Omega}$ to $[\Le(\Omega)]^{3\times3}$. We emphasise that the proof of Theorem~\ref{thm:ex_un_primal} can be directly applied with the adapted product space $\X= [\Honez(\Omega)]^3\times [\Le(\Omega)]^{3\times3}$ together with the requirement of positive definite $\Cc$ on the set of skew-symmetric matrices. Otherwise, control over the skew-symmetric part of $\bm{P}$ is completely lost in the limit leading to unstable results. Note that in this case no prescription of boundary conditions for $\bm{P}$ is possible.

\cref{eq:strong_form_dP} can
be reformulated as
\begin{align}
    -\Ce  \sym (\D \vb{u} - \Pm) - \Cc  \skw(\D \vb{u} - \Pm) + \Cm \sym \Pm = \bm{M} \, ,
    \label{eq:cond}
\end{align} 
and used to express $\bm{P}$ algebraically
\begin{align}
\Pm = (\Ce+\Cm)^{-1}\Ce  \sym \du + \skw\du  + \bm{M}\, .\label{eq:expr_P_limit0}
\end{align}
Setting $\bm{M} = 0$ in \cref{eq:cond} implies $\Cc\skw(\du - \Pm)= 0$ and consequently
\begin{align}
    \Ce\sym(\du - \Pm) &= \Cm \sym \Pm  \, , &\sym \Pm &= (\Ce+\Cm)^{-1}\Ce  \sym \du \, .
\end{align}
Applying the latter to \cref{eq:strong_form_du} yields
\begin{align}
    -\Di(\mathbb{C}_\mathrm{macro} \sym \du) = \vb{f} \, , && \mathbb{C}_\mathrm{macro} = \Cm (\Ce+\Cm)^{-1}\Ce \, .
    \label{eq:tocauchy}
\end{align}
The upper definition is derived in \cite{Barbagallo2017} and relates the meso- and micro-elasticity tensors to the classical macro-elasticity tensor $\mathbb{C}_\mathrm{macro}$ of the Cauchy continuum, allowing to extract the macro material constants in the isotropic case
\begin{align}
    \muma &= \dfrac{\mu_\mathrm{e} \, \mu_\mathrm{micro}}{\mu_\mathrm{e} + \mu_\mathrm{micro}} \, , & 2 \muma + 3 \, \lambda_\mathrm{macro} &= \dfrac{(2\,\mu_\mathrm{e} + 3 \, \lambda_\mathrm{e})(2\,\mu_\mathrm{micro} + 3\, \lambda_\mathrm{micro})}{(2\,\mu_\mathrm{e} + 3\, \lambda_\mathrm{e}) + (2\,\mu_\mathrm{micro} + 3\, \lambda_\mathrm{micro})} \, .
    \label{eq:lame}
\end{align}
In fact, $\mathbb{C}_\mathrm{macro}$ contains the material constants that arise from classical homogenization for large periodic structures.


From \cref{eq:expr_P_limit0} we observe that in general $\bm{P}$ is not a gradient field, even if $\bm{M}=\bm{0}$. Therefore, by setting $\lambda_\mathrm{e}=\lambda_\mathrm{micro}=0$ we obtain that $\skw \bm{P} = \skw \du$ and $\sym \bm{P}= \frac{\mu_{\mathrm{e}}}{\mu_{\mathrm{e}}+\mu_{\mathrm{micro}}}\sym \du$. As $\mu_{\mathrm{micro}}>0$ we deduce that $\bm{P}$ is not a gradient field. This is a significant deviation from the two dimensional relaxed micromorphic model of antiplane shear analyzed in \cite{Sky2021}, where a gradient field as a right-hand side leads to a gradient field for the microdistortion.

\subsubsection{Mixed form}\label{ssec:mixed}
A major aspect of the relaxed micromorphic continuum is the relation to the classical Cauchy continuum theory. This relation is governed by the material constants, where the characteristic length $\Lc$ plays a significant role. We are therefore interested in robust computations with respect to $\Lc$. To that end, we reformulate the problem as a mixed formulation. The first step consists in introducing the new unknown
\begin{align}
\bm{D} = \muma\,\Lc^2 \Curl \bm{P} \,\in \HD{,\Omega} \, ,\label{eq:def_p}
\end{align}
reminiscent of the micro-dislocation, 
and examining its distribution with a test function
\begin{align}
\int_{\Omega} \langle \Curl \bm{P} , \delta \bm{D} \rangle - \dfrac{1}{\muma\,\Lc^2}\langle \bm{D} , \delta \bm{D} \rangle  \,\dd X = 0 \, .
\end{align} 

In fact, $\bm{D}$ must be solenoidal as
\begin{align}
\Di \bm{D} = \muma\,\Lc^2 \Di\Curl \bm{P} = \vb{0}
\end{align}
and thus the appropriate space for $\bm{D}$ is 
\begin{align}
\HDzd{, \Omega}=\HD{, \Omega} \cap \ker(\Di) = \{\bm{D}\in \HD{, \Omega}\,|\, \Di\bm{D}=\vb{0}\}.
\end{align}

We again assume that Dirichlet boundary conditions are prescribed on the whole boundary and thus, from $\bm{P}\in\HCz{, \Omega}$ there follows $\bm{D}\in\HDz{, \Omega}$ as $\bm{D}\bm{\nu} = \Curl\bm{P}\bm{\nu}=\vb{0}$.

The (bi-)linear forms are now given by
\begin{subequations}
\begin{align}
a(\{\vb{u},\bm{P}\},\{\delta \vb{u},\delta \bm{P}\}) &= \int_{\Omega} \langle\Ce \sym (\D \vb{u} - \bm{P}) , \sym(\D \delta \vb{u} -\delta \bm{P}) \rangle +\langle\Cm \sym\bm{P} ,  \sym\delta \bm{P} \rangle \nonumber\\
&\qquad +  \langle\Cc \skw (\D \vb{u} - \bm{P}) , \skw(\D \delta \vb{u} -\delta \bm{P}) \rangle  \,\dd X \, , \\
b(\{\vb{u},\bm{P}\}, \delta \bm{D}) &= \int_{\Omega} \langle \Curl\bm{P} , \delta \bm{D} \rangle\, \dd X \, , \\
d(\bm{D},\delta \bm{D}) &= \int_{\Omega} \langle \bm{D} , \delta \bm{D} \rangle\, \dd X \, , \\
l(\delta \vb{u}, \delta \bm{P}) &=  \int_{\Omega}   \langle \delta \vb{u},\, \vb{f} \rangle + \langle \delta \bm{P} , \, \bm{M} \rangle \, \dd X \, ,
\end{align}
\end{subequations}
and the resulting mixed formulation reads: Find $(\{\vb{u},\bm{P}\},\bm{D})\in X\times \HDzdz{, \Omega}$ such that
\begin{subequations}
	\label{eq:mixed_problem}
	\begin{alignat}{3}
	&a(\{\vb{u},\bm{P}\},\{\delta \vb{u},\delta \bm{P}\}) +  b(\{\delta \vb{u}, \delta \bm{P}\},  \bm{D}) &&= l(\delta \vb{u}, \delta \bm{P}) \, , 
	&&\quad \forall\, \{\delta \vb{u},\delta \bm{P}\} \in \X \, ,
	\label{eq:mixed_problem_a} \\
	&b(\{ \vb{u}, \bm{P}\}, \delta \bm{D})- \dfrac{1}{\muma\,\Lc^2}d(\bm{D},\delta \bm{D})  &&= 0 \, , &&\quad \forall\, \delta \bm{D} \in \HDzdz{, \Omega} \, , \label{eq:mixed_problem_b} 
	\end{alignat}
\end{subequations}

where the Lagrange multiplier $\bm{D}$ has the physical meaning of a hyperstress. Notice we now approximate the hyperstress directly with its own variable, therefore recovering lost precision due to differentiation.

For the following proofs we will make use of the Helmholtz decomposition \cite{GR86}, splitting a vector field into a curl and gradient potential. For all $\vb{u}\in [\Le(\Omega)]^3$ there exists $\vb{q}\in \Hcz{, \Omega}$ and $\Psi\in \Hone(\Omega)\backslash\mathbb{R}$ such that
\begin{align}
\label{eq:helmholtz}
    &\vb{u} = \curl \vb{q} + \nabla \Psi\,, \qquad \|\vb{q}\|_{\Hc{}}\leq c\|\vb{u}\|_{\Le} \, , \qquad  \|\Psi\|_{\Hone}\leq c\|\vb{u}\|_{\Le} \, .
\end{align}
Further, if $\vb{u}\in \Hdzdz{,\Omega}$ then there exists $\vb{q}\in \Hcz{, \Omega}$  such that
\begin{align}
\label{eq:helmholtz2}
    \vb{u} = \curl \vb{q}\,,\qquad \|\vb{q}\|_{\Hc{}}\leq c\|\vb{u}\|_{\Le}.
\end{align}
\begin{theorem}
\label{thm:ex_un_mixed1}
	Problem \ref{eq:mixed_problem} is uniquely solvable and there holds the stability estimate
	\begin{align*}
	\|\{ \vb{u},\bm{P} \}\|_{\X}+\|\bm{D}\|_{\HD{}} \leq c \, \|l\|_{\Le},\qquad c\neq c(\Lc).
	\end{align*}
\end{theorem}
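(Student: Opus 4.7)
The plan is to invoke Brezzi's theorem for saddle-point problems with a penalty term, which simultaneously delivers unique solvability of \eqref{eq:mixed_problem} and a stability constant independent of the penalty weight $1/(\muma\Lc^2)$. The ingredients to be verified are: continuity of the forms $a$, $b$, $d$, $l$; coercivity of $a$ on $\ker(b)$; and an inf-sup condition for $b$. Continuity of all forms is immediate from Cauchy-Schwarz, exactly as in the proof of Theorem~\ref{thm:ex_un_primal}.

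To characterise the kernel, I would first note that the full-Dirichlet hypothesis gives $\bm{P}\in\HCz{,\Omega}$, whence $\Curl\bm{P}\in\HDzdz{,\Omega}$ (because $\Di\Curl\bm{P}=\vb{0}$ and $\bm{P}\times\bm{\nu}=\vb{0}$forces $(\Curl\bm{P})\,\bm{\nu}=\vb{0}$). Testing \eqref{eq:mixed_problem_b} with $\delta\bm{D}=\Curl\bm{P}$ then shows $\ker(b)=\{\{\vb{u},\bm{P}\}\in\X : \Curl\bm{P}=\vb{0}\}$. On this kernel $\|\bm{P}\|_{\HC{}}^2=\|\bm{P}\|_{\Le}^2$, and \eqref{eq:control_P_hc} reduces to $\|\sym\bm{P}\|_{\Le}^2\geq c\|\bm{P}\|_{\Le}^2$, so the coercivity estimate $a(\{\vb{u},\bm{P}\},\{\vb{u},\bm{P}\})\geq c\|\{\vb{u},\bm{P}\}\|_{\X}^2$ on $\ker(b)$ follows by repeating the Young-Korn-Poincar\'e chain from the proof of Theorem~\ref{thm:ex_un_primal}.

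For the inf-sup condition, I would apply the Helmholtz decomposition \eqref{eq:helmholtz2} row-wise to a given $\bm{E}\in\HDzdz{,\Omega}$, producing $\bm{Q}\in\HCz{,\Omega}$ with $\bm{E}=\Curl\bm{Q}$ and $\|\bm{Q}\|_{\HC{}}\leq c\|\bm{E}\|_{\Le}=c\|\bm{E}\|_{\HD{}}$, the last equality using $\Di\bm{E}=\vb{0}$. The admissible test pair $\{\vb{0},\bm{Q}\}\in\X$ then produces $b(\{\vb{0},\bm{Q}\},\bm{E})=\|\bm{E}\|_{\HD{}}^2$, giving an inf-sup constant $1/c$ that is independent of $\Lc$.

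The principal difficulty is to make the stability constant $\Lc$-robust. A naive test with $(\vb{u},\bm{P},\bm{D})$ only controls $\frac{1}{\muma\Lc^2}\|\bm{D}\|_{\Le}^2$, which degenerates as $\Lc\to\infty$. The Brezzi-with-penalty argument overcomes this by decomposing $\bm{P}=\bm{P}_0+\bm{P}_s$, with $\Curl\bm{P}_0=\vb{0}$ and $\bm{P}_s$ the Helmholtz potential of $\Curl\bm{P}$, so that kernel coercivity applied to $\{\vb{u},\bm{P}_0\}$ together with the inf-sup estimate applied to $\bm{P}_s$ yields $\|\{\vb{u},\bm{P}\}\|_{\X}\leq c\|l\|_{\Le}$ uniformly in $\Lc$; inserting the inf-sup realiser into \eqref{eq:mixed_problem_a} then gives $\|\bm{D}\|_{\HD{}}\leq c(\|l\|_{\Le}+\|\{\vb{u},\bm{P}\}\|_{\X})\leq c\|l\|_{\Le}$, completing the proof.
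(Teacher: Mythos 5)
Your proposal is correct and follows essentially the same route as the paper: both proofs invoke Brezzi's theorem with penalty, characterize $\ker(b)=\{\{\vb{u},\bm{P}\}\in\X : \Curl\bm{P}=\vb{0}\}$ via the exactness $\Curl(\HCz{,\Omega})=\HDzdz{,\Omega}$, reuse the coercivity chain from Theorem~\ref{thm:ex_un_primal} on the kernel, and verify the LBB condition by taking $\vb{u}=\vb{0}$ and $\bm{P}$ a Helmholtz potential of $\bm{D}$ with $\|\bm{P}\|_{\HC{}}\leq c\|\bm{D}\|_{\Le}$. Your final paragraph merely unpacks the internal mechanism of the extended Brezzi theorem (splitting $\bm{P}$ into kernel and complement), which the paper instead leaves to the cited reference.
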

\begin{proof}
	We use the extended Brezzi-theorem \cite[Thm. 4.11]{Bra2013}. The continuity of $a(\cdot,\cdot)$, $b(\cdot,\cdot)$, $d(\cdot,\cdot)$ and non-negativity of $a(\cdot,\cdot)$ and $d(\cdot,\cdot)$ are obvious. 
	
	Therefore, we have to prove that $a(\cdot,\cdot)$ is coercive on the kernel of $b(\cdot,\cdot)$
	\begin{align}
	\ker (b) = \{ \{\vb{u}, \bm{P} \} \in \X \; | \; b(\{\vb{u}, \bm{P} \}, \delta \bm{D}) = 0 \, , \, \forall \,  \delta \bm{D} \in \HDzd{, \Omega} \} = \{ \{\vb{u}, \bm{P} \} \in \X \, | \, \Curl \bm{P} = \bm{0} \} \, .
	\end{align}
	The last equality follows from the
	exact property $\Curl\big(\HCz{, \Omega}\big) = \HDzdz{, \Omega}$.
	However, we already know that $a(\{\vb{u},\bm{P}\},\{\delta \vb{u},\delta\bm{P}\}) + \langle \Curl\bm{P}, \Curl\delta\bm{P}\rangle_{\Le}$ is coercive, compare the proof of Theorem~\ref{thm:ex_un_primal}. This leaves us with the Ladyzhenskaya--Babu{\v{s}}ka--Brezzi (LBB) condition to be satisfied
	\begin{align}
	\exists \beta_2 > 0 : \quad \sup_{\{\vb{u}, \bm{P}\} \in \X} \dfrac{b(\{\vb{u}, \bm{P}\},\bm{D})}{ \| \{\vb{u}, \bm{P}\} \|_{\X}} \geq \beta_2 \, \|\bm{D}\|_{\HD{}} \, , \quad \forall\, \bm{D} \in \HDzdz{, \Omega} \, .
	\end{align}
	We choose $\vb{u}=\vb{0}$ and $\bm{P}$ such that $\Curl \bm{P}=\bm{D}$ with $\|\bm{P}\|_{\HC{}}\leq c\|\bm{D}\|_{\Le}$ leading to
	\begin{align}
	\sup_{\{\vb{u}, \bm{P}\} \in \X}\dfrac{b(\{\vb{u}, \bm{P}\},\bm{D})}{ \| \{\vb{u}, \bm{P}\} \|_{\X}} &= \sup_{\{\vb{u}, \bm{P}\} \in \X}\dfrac{\int_\Omega \langle\bm{D}, \Curl\bm{P}\rangle\,\dd X}{ \| \bm{P}\|_{\Le}+\| \Curl\bm{P}\|_{\Le}}\geq c \, \dfrac{\|\bm{D}\|^2_{\Le}}{\|\bm{D}\|_{\Le}} = c\,\|\bm{D}\|_{\HD{}}.
	\end{align}
	The existence of such a potential $\bm{P}$ for $\bm{D}\in \HDzdz{, \Omega}$ follows directly from the theory of stable Helmholtz decompositions \eqref{eq:helmholtz2}.
	
	Thus, with Brezzi's theorem \cite{BBF13}, there exists a unique solution independent of $\Lc$ for $\frac{1}{\muma \Lc^2}\leq 1$ fulfilling the stability estimate
	\begin{align}
	\|\{ \vb{u},\bm{P} \}\|_{\X}+\|\bm{D}\|_{\HD{}} \leq c \, \|l\|_{\Le} \, ,\label{eq:mixed_method_stab_est}
	\end{align}
	where the constant $c$ does not depend on $\Lc$.
\end{proof}

\begin{remark}
	In the proof we made use of the exactness property of the de Rham complex by restricting the space of $\bm{D}$ to be divergence free. Otherwise we cannot prove the LBB condition in the limit $\Lc\to\infty$. Numerically we observed that this restriction is necessary, otherwise the resulting matrix becomes singular.
\end{remark}

The limit case $\lim \Lc \to \infty$ of \cref{eq:mixed_problem} is well-defined, resulting in the problem: Find $(\{\vb{u}_{\infty},\bm{P}_{\infty}\},\bm{D}_{\infty})\in \X\times \HDzdz{, \Omega}$ such that
\begin{subequations}
	\label{eq:mixed_problem_limit}
	\begin{alignat}{3}
	& a(\{\vb{u}_\infty,\bm{P}_\infty\},\{\delta \vb{u},\delta \bm{P}\}) +  b(\{\delta \vb{u}, \delta \bm{P}\},  \bm{D}_\infty) &&= l(\delta \vb{u}, \delta \bm{P}) \, , 
	&&\quad \forall\, \{\delta \vb{u},\delta \bm{P}\} \in \X \, ,\label{eq:mixed_problem_limit_a}
	\\
	& b(\{ \vb{u}_\infty, \bm{P}_\infty\}, \delta \bm{D})  &&= 0 \, , &&\quad \forall\, \delta \bm{D} \in \HDzdz{, \Omega} \, . \label{eq:mixed_problem_limit_b}
	\end{alignat}
	\label{eq:lcinf}
\end{subequations}

Consequently, at the limit $\lim \Lc \to \infty$ we have $\Curl \bm{P} = \bm{0}$, i.e. the rows of $\bm{P}$ are gradient fields. This can be observed by considering that $\delta \bm{D}$ only tests functions in $\range(\Curl)$, where the range is fully given by $\range(\Curl) = \HDzd{}$. As such, by the Helmholtz decomposition only the gradient part of the microdistortion $\Pm$ remains. The latter is also clearly observable through the minimization of the energy function, as $\Pm = \D \bm{\Psi}$ is needed for finite energies. 
Therefore, the consistent coupling condition is crucial, prescribing only gradient fields as Dirichlet boundary conditions for $\bm{P}$. From the theory of mixed methods we obtain, analogously to the 2D case in \cite{Sky2021}, quadratic convergence in $\Lc$ towards the limit case
\begin{align}
\| \{\vb{u}_\infty - \vb{u},\bm{P}_\infty\ - \bm{P} \} \|_{\X} + \| \bm{D}_\infty - \bm{D} \|_{\HD{}}  \leq \dfrac{c}{\Lc^2} \|l \|_{\Le},\quad c\neq c(\Lc) \, .\label{eq:mixed_method_conv_Lc}
\end{align}

\begin{remark}\label{re:tocmicro}
    Note that for the limit $\Lc \to \infty$ one finds $\Pm \to \D \bm{\Psi}$, inducing finite energies due to $\Curl \D \bm{\Psi} = 0$. Since $\Lc$ defines a zoom into the microstructure, the latter can be interpreted as the entire domain being the micro-body. Consequently, setting $\vb{f} = \vb{0}$ yields
    \begin{align}
        -\Di[\Ce \sym (\D \vb{u} - \bm{P}) + \Cc \skw (\D \vb{u} - \bm{P})] &= \vb{0} \,,
        \label{eq:cmicro}
    \end{align}
    and as such, taking the divergence of \cref{eq:strong_form_dP} results in
    \begin{align}
        \Di ( \Cm \sym \D \bm{\Psi}) &= \Di \bm{M} \, .
    \end{align}
    The divergence of the micro-moment $\Di\bm{M}$ can be interpreted as the micro body-force. The latter implies that the limit $\Lc \to \infty$ defines again a classical Cauchy continuum theory with a finite stiffness $\Cm$, representing the upper limit of the stiffness for the relaxed micromorphic continuum. Further, due to the consistent coupling condition and \cref{eq:cmicro} there holds $\D \bm{\Psi} = \D \vb{u}$ (for a thorough derivation see \cite{Barbagallo2017}).
\end{remark}

\subsubsection{Reformulation of the mixed divergence free constraint}
	The construction of finite elements which are exactly divergence free, $\overset{\circ}{V^h}\subset \HDzd{, \Omega}$, is possible for higher polynomials, \cite{Zaglmayr2006}. However, at least for the lowest order shape functions, one has to use an additional Lagrange multiplier $\vb{q}\in [\Lez(\Omega)]^3$, compare Remark~\ref{rem:zero_mean} below, forcing $\Di \bm{D}=\vb{0}$. We now show how the mixed formulation has to be adapted leading to a method which can be directly implemented. By defining
	$\widetilde{\X}=\X\times [\Lez(\Omega)]^3$  we introduce the adapted bilinear forms
	\begin{subequations}
	\begin{align}
	\widetilde{a}(\{\vb{u},\bm{P},\vb{q}\},\{\delta \vb{u},\delta \bm{P},\delta \vb{q}\}) &= a(\{\vb{u},\bm{P}\},\{\delta \vb{u},\delta \bm{P}\}) \, , \\
	\widetilde{b}(\{\vb{u},\bm{P},\vb{q}\}, \delta \bm{D}) &= \int_{\Omega} \langle \Curl\bm{P} , \delta \bm{D} \rangle + \langle\vb{q},\Di \delta \bm{D}\rangle \, \dd X \,
	\end{align}
	\end{subequations}
	and the problem reads: find $(\{\vb{u},\bm{P},\vb{q}\},\bm{D})\in \widetilde{\X}\times \HDz{, \Omega}$ such that
	\begin{subequations}
	\label{eq:mixed_problem2}
		\begin{alignat}{3}
		&\tilde{a}(\{\vb{u},\bm{P},\vb{q}\},\{\delta \vb{u},\delta \bm{P},\delta \vb{q}\}) +  \tilde{b}(\{\delta \vb{u}, \delta \bm{P}, \delta \vb{q}\},  \bm{D}) &&= l(\delta \vb{u}, \delta \bm{P}) \, , 
		&&\quad \forall\, \{\delta \vb{u},\delta \bm{P},\delta \vb{q}\} \in \tilde{\X} \, ,
		\\
		&\tilde{b}(\{ \vb{u}, \bm{P},\vb{q}\}, \delta \bm{D})- \dfrac{1}{\muma\,\Lc^2}d(\bm{D},\delta \bm{D})  &&= 0 \, , &&\quad \forall\, \delta \bm{D} \in \HDz{, \Omega} \, .
		\end{alignat}
	\end{subequations}

\begin{theorem}
\label{thm:ex_un_mixed2}
	Problem~\ref{eq:mixed_problem2} is uniquely solvable and the solution solves also Problem~\ref{eq:mixed_problem}.
\end{theorem}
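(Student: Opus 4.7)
The plan is to apply the extended Brezzi theorem to Problem~\ref{eq:mixed_problem2} in close analogy with Theorem~\ref{thm:ex_un_mixed1}, and then to deduce equivalence with Problem~\ref{eq:mixed_problem} by testing the saddle-point equations with suitable variations. Continuity of $\widetilde{a}$, $\widetilde{b}$ and non-negativity of $\widetilde{a}$ and $d$ are immediate, so the interesting work is to characterise $\ker(\widetilde{b})$, to transfer the kernel-coercivity, and to upgrade the LBB inequality to the larger multiplier space $\HDz{,\Omega}$.

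For the kernel I would first restrict the test $\delta\bm{D}$ to the divergence-free subspace $\HDzdz{,\Omega}$, which kills the $\vb{q}$-contribution and leaves $\int_\Omega\langle\Curl\bm{P},\delta\bm{D}\rangle\,\dd X=0$. Since $\bm{P}\in\HCz{,\Omega}$ forces $\Curl\bm{P}\in\HDzdz{,\Omega}$ (the vanishing tangential trace of $\bm{P}$ transfers, through Stokes' theorem, to a vanishing normal trace of $\Curl\bm{P}$), testing against $\Curl\bm{P}$ itself gives $\Curl\bm{P}=\vb{0}$. Plugging this back leaves $\int_\Omega\langle\vb{q},\Di\delta\bm{D}\rangle\,\dd X=0$ for all $\delta\bm{D}\in\HDz{,\Omega}$; since $\Di\colon\HDz{,\Omega}\to[\Lez(\Omega)]^{3}$ is surjective (solve componentwise a Neumann Laplace problem) this forces $\vb{q}=\vb{0}$. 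Hence $\ker(\widetilde{b})=\ker(b)\times\{\vb{0}\}$, and coercivity of $\widetilde{a}$ on this kernel reduces to the kernel-coercivity of $a$ already established in Theorem~\ref{thm:ex_un_mixed1}.

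The main obstacle is the new LBB condition, which now has to control $\|\bm{D}\|_{\Le}$ and $\|\Di\bm{D}\|_{\Le}$ simultaneously with a single test triple. Given $\bm{D}\in\HDz{,\Omega}$ I would first construct a bounded lifting $\bm{\Phi}\in\HDz{,\Omega}$ with $\Di\bm{\Phi}=\Di\bm{D}$ and $\|\bm{\Phi}\|_{\Le}\le c\|\Di\bm{D}\|_{\Le}$ by solving rowwise $-\Delta\phi_i=-\Di\bm{D}|_i$ with homogeneous Neumann data and setting $\bm{\Phi}=\nabla\bm{\phi}$. Then $\bm{D}-\bm{\Phi}\in\HDzdz{,\Omega}$, and the Helmholtz decomposition~\eqref{eq:helmholtz2} supplies $\bm{P}\in\HCz{,\Omega}$ with $\Curl\bm{P}=\bm{D}-\bm{\Phi}$ and $\|\bm{P}\|_{\HC{}}\le c\|\bm{D}\|_{\HD{}}$. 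Probing $\widetilde{b}$ with the triple $(\vb{0},\bm{P},\alpha\,\Di\bm{D})$ yields
\begin{align*}
\widetilde{b}(\{\vb{0},\bm{P},\alpha\,\Di\bm{D}\},\bm{D})=\|\bm{D}\|_{\Le}^{2}-\int_\Omega\langle\bm{\Phi},\bm{D}\rangle\,\dd X+\alpha\,\|\Di\bm{D}\|_{\Le}^{2} \, ,
\end{align*}
and Young's inequality absorbs the cross term into the two squared norms provided $\alpha$ is chosen large enough, giving a lower bound $c\|\bm{D}\|_{\HD{}}^{2}$ while $\|(\vb{0},\bm{P},\alpha\,\Di\bm{D})\|_{\widetilde{\X}}\le c\|\bm{D}\|_{\HD{}}$. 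The resulting inf-sup constant is independent of $\Lc$, and Brezzi's theorem delivers unique solvability of Problem~\ref{eq:mixed_problem2}.

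Equivalence with Problem~\ref{eq:mixed_problem} comes out of one observation: since neither $\widetilde{a}$ nor $l$ depends on the multiplier $\vb{q}$, testing the first equation with $(\vb{0},\vb{0},\delta\vb{q})$ for arbitrary $\delta\vb{q}\in[\Lez(\Omega)]^{3}$ leaves $\int_\Omega\langle\delta\vb{q},\Di\bm{D}\rangle\,\dd X=0$, and as $\Di\bm{D}$ already has zero mean (from $\bm{D}\in\HDz{,\Omega}$) this enforces $\Di\bm{D}=\vb{0}$, i.e.\ $\bm{D}\in\HDzdz{,\Omega}$. Restricting the remaining equations to $\delta\bm{D}\in\HDzdz{,\Omega}\subset\HDz{,\Omega}$ eliminates the $\vb{q}$-term in $\widetilde{b}$ and reproduces Problem~\ref{eq:mixed_problem} exactly; uniqueness of the two formulations then identifies their solutions.
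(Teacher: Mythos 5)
Your proof is correct and follows the same overall strategy as the paper: apply the extended Brezzi theorem, characterise $\ker(\widetilde{b})$ by testing first against divergence-free $\delta\bm{D}$ and then against a $\delta\bm{D}$ with prescribed divergence to get $\Curl\bm{P}=\bm{0}$ and $\vb{q}=\vb{0}$, inherit kernel coercivity from Theorem~\ref{thm:ex_un_mixed1}, and close by choosing $\delta\vb{q}$ and $\delta\bm{D}\in\HDzdz{,\Omega}$ to recover Problem~\ref{eq:mixed_problem}. The only substantive deviation is in the LBB construction. The paper applies the full Helmholtz decomposition $\bm{D}=\Curl\bm{Q}+\D\vb{\Psi}$ and then picks the multiplier component as $\vb{q}=\Di\bm{D}-\vb{\Psi}$; with that choice every cross term vanishes exactly after integration by parts, so the numerator becomes precisely $\|\bm{D}\|_{\HD{}}^2$ with no absorption needed. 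You instead build the gradient part explicitly via a rowwise Neumann Laplace solve (your $\bm{\Phi}=\nabla\bm{\phi}$ is exactly $\D\vb{\Psi}$), subtract it, Helmholtz-decompose the remainder, and take the simpler $\vb{q}=\alpha\Di\bm{D}$; this leaves the cross term $\int\langle\bm{\Phi},\bm{D}\rangle$ which you then absorb by Young's inequality with $\alpha$ sufficiently large. Both give an $\Lc$-independent inf-sup constant; the paper's choice is slightly slicker since the estimate becomes an identity, whereas yours is a touch more mechanical but perhaps more transparent about where the $\Di\bm{D}$-control comes from.
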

\begin{proof}
The kernel of the bilinear form $b$ is now given by
	\begin{align}
	\ker (b) = \{ \{\vb{u}, \bm{P},\vb{q} \} \in \tilde{\X} \; | \; b(\{\vb{u}, \bm{P},\vb{q} \}, \delta \bm{D}) = 0 \quad \forall \delta \bm{D} \in \HDz{, \Omega} \} = \{ \{\vb{u}, \bm{P},\vb{q} \} \in \tilde{\X} \; | \; \Curl \bm{P} = \bm{0} \text{ and } \vb{q}=\vb{0} \} \,.
	\end{align}
	The last equality follows by choosing on the one hand $\bm{D}=\Curl \bm{P}\in\HDz{,\Omega}$ yielding $\Curl\bm{P}=\bm{0}$ and on the other hand from the de Rham sequence we find that $\Di \left(\HDz{, \Omega}\right)=\Lez(\Omega)$ such that we can choose $\bm{D}\in \HDz{, \Omega}$ such that $\Di\bm{D}=\vb{q}$. With the same argument as in the proof of Theorem~\ref{thm:ex_un_mixed1} the kernel coercivity follows.
	For the LBB condition we use the Helmholtz decomposition \eqref{eq:helmholtz} $\bm{D}=\Curl\bm{Q}+\D \vb{\Psi}$, $\bm{Q}\in\HCz{,\Omega}$ and $\vb{\Psi}\in[\Hone(\Omega)\backslash\mathbb{R}]^3$, and define $\vb{u}=\vb{0}$, $\bm{P}=\bm{Q}$, and $\vb{q}=\Di\bm{D}-\vb{\Psi}\in [\Lez(\Omega)]^3$. Then there holds
	\begin{align*}
	\sup\limits_{\{\vb{u},\bm{P},\vb{q}\}\in\tilde{\X}}\dfrac{b(\{\vb{u}, \bm{P}, \vb{q}\},\bm{D})}{ \| \{\vb{u}, \bm{P}, \vb{q}\} \|_{\tilde{\X}}} &= \sup\limits_{\{\vb{u},\bm{P},\vb{q}\}\in\tilde{\X}}\dfrac{\int_{\Omega} \langle\bm{D}, \Curl\bm{P}\rangle + \langle\vb{q}, \Di\bm{D}\rangle\,\dd X}{\| \bm{P}\|_{\Le}+\| \Curl\bm{P}\|_{\Le}+\|\vb{q}\|_{\Le}}\\
	&\geq  \dfrac{\int_{\Omega} \langle\Curl\bm{Q}+\D \vb{\Psi},\Curl\bm{Q}\rangle - \langle\vb{\Psi}, \Di\D\vb{\Psi}\rangle+\|\Di \bm{D}\|^2\,\dd X}{ \| \bm{Q}\|_{\Le}+\| \Curl\bm{Q}\|_{\Le}+\|\Di \bm{D}-\vb{\Psi}\|_{\Le}}\\
	&\geq\dfrac{\int_{\Omega} \|\Curl\bm{Q}\|^2 + \|\Di \bm{D}\|^2+\|\D \vb{\Psi}\|^2\,\dd X}{ \| \bm{Q}\|_{\Le}+\| \Curl\bm{Q}\|_{\Le}+\|\Di \bm{D}\|_{\Le}+\|\vb{\Psi}\|_{\Le}}\geq c \|\bm{D}\|_{\HD{}}.
	\end{align*}
	Now, with Brezzi's theorem we conclude that Problem~\ref{eq:mixed_problem2} is uniquely solvable with a stability constant independent of $\Lc$.
	
	Due to the exactness property of the de Rham complex the additional Lagrange multiplier $\vb{q}$ enforces that $\bm{D}\in\HDzdz{,\Omega}$. Therefore,  taking $(\{\vb{u},\bm{P}\},\bm{D})\in \X\times \HDzdz{,\Omega}$ from the solution solves also Problem~\ref{eq:mixed_problem}.
\end{proof}

\begin{remark}
\label{rem:zero_mean}
    In case of prescribed Dirichlet boundary conditions on $\Gamma_D^P$ for $\Pm \in \HC{,\Omega}$, due to the de Rham complex, compatible Dirichlet conditions have to be used for $\bm{D}\in \HD{,\Omega}$. If $\Gamma_D^P=\partial\Omega$ then from $\Pm \in \HCz{,\Omega}$ there follows $\bm{D}\in \HDz{,\Omega}$. Thus, with $\Di(\HDz{,\Omega})=[\Lez(\Omega)]^3 $ the space for $\vb{q}$ has a zero mean. Consequently, another vector valued variable in $\mathbb{R}^3$ is needed for numerics to ensure the zero average over the domain of $\vb{q}$, since elements of $\Le$ cannot be prescribed on the boundary.
\end{remark}

\begin{figure}
	\centering
	\begin{tikzpicture}[scale = 0.6][line cap=round,line join=round,>=triangle 45,x=1.0cm,y=1.0cm]
		\clip(2.9,7) rectangle (24.5,10);
		\draw (2.9,9) node[anchor=north west] {$[\mathit{H}^1(\Omega)]^3$};
		\draw (8.7,9) node[anchor=north west] {$\HC{, \Omega}$};
		\draw [->,line width=1.5pt] (5.7,8.5) -- (8.5,8.5);
		\draw (6.5,9.5) node[anchor=north west] {$\D$};
		\draw [->,line width=1.5pt] (12.2,8.5) -- (15.2,8.5);
		\draw (12.8,9.5) node[anchor=north west] {$\Curl$};
		\draw (15.5,9) node[anchor=north west]  {$\HD{,\Omega}$};
		\draw [->,line width=1.5pt] (18.8,8.5) -- (21.8,8.5);
		\draw (19.4,9.5) node[anchor=north west] {$\Di$};
		\draw (22,9) node[anchor=north west] {$[\Le(\Omega)]^3$};
	\end{tikzpicture}
	\caption{The classical de Rham exact sequence.
		The range of each operator is exactly the kernel of the next operator in the sequence.}
	\label{fig:rham}
\end{figure} 

\subsection{Discrete case}\label{ch:3}
We note that existence and uniqueness in the primal formulation is given by the Lax-Milgram theorem. As a result, the use of a commuting diagram for the relation between the displacement $\vb{u}$ and the microdistortion $\Pm$ is not necessary. However, the relaxed micromorphic model introduces the so called consistent coupling condition on the Dirichlet boundary, which can be satisfied exactly in the general case, if commuting interpolants are employed. Further, the mixed formulation requires the coercivity on the kernel of the bilinear form $b(\{\vb{u}, \Pm\}, \Ds)$ to also be satisfied in the discrete case. Consequently, we rely on the commuting de Rham diagram for the construction of our finite elements. Specifically, for the lower order elements we make use of linear and quadratic tetrahedral Lagrangian elements, linear elements from the first and second N\'ed\'elec spaces and lowest-order Raviart-Thomas elements, see \cref{fig:comm}.

We denote in the following
\begin{align*}
    \vb{u}^h,\delta \vb{u}^h &\in V^h\subset[\Hone(\Omega)]^3\,,& \bm{P}^h,\delta \bm{P}^h&\in U^h\subset \HC{,\Omega}\,,\\
    \bm{D}^h,\delta \bm{D}^h &\in\Sigma^h\subset \HD{,\Omega}\,,& \vb{q}^h,\delta \vb{q}^h &\in Q^h\subset [\Le(\Omega)]^3\,.
\end{align*}
On each tetrahedral element $T$ we define $P^p(T)$ as the set of all polynomials up to order $p$, $P^p(T)=\{ x^iy^jz^l\,\big|\, i+j+l\leq p,\, i,j,l\geq 0 \}$ and denote with $P^p(\Omega)$ the set of piece-wise polynomials of order $p$ on the triangulation of $\Omega$.

\begin{remark}
	The Raviart-Thomas element from the space $\RT^0$ can only produce constant normal projections on an element's outer surface. Further, the dimension of the space is smaller than the full linear space, $\dim \RT^0 = 4 < \dim [\Po^1]^3 = 12$.
	
	The lowest-order Brezzi--Douglas--Marini elements $\BDM^1$ have linear normal components as the full linear polynomial space is considered. On the one hand better $\Le$-estimates are therefore possible, however, on the other hand the range of the divergence operator is not increased. There holds the sequence
	\begin{align}
	   [P^0(\Omega)]^3\subsetneq \RT^0\subsetneq \BDM^1=[P^1(\Omega)]^3 \subsetneq \RT^1 \subsetneq  \BDM^2=[P^2(\Omega)]^3 \subsetneq\dots \,.
	\end{align}
\end{remark}

From Section~\ref{ssec:mixed} we know that the hyperstress field $\bm{D}\in\HD{,\Omega}$ is solenoidal, $\Di \bm{D}= \vb{0}$. Therefore, aside from the lowest order case $\RT^0$, it is desirable to use a basis for $\RT^p$ (or $\BDM^p$) where all higher-order shape functions are divergence-free. We denote the adapted space by $\RT^p_0=\{\vb{d}\in \RT^p\;|\; \vb{d}\in \RT^0 \text{ or } \di \vb{d}=0\}$ and the corresponding interpolation operator as $\Pi^p_d$. With such a construction, we can on the one hand save several redundant degrees of freedom, reducing the computational costs for assembly and solution steps without any loss of accuracy, and on the other hand, only a constant correction term $\vb{q}\in [P^0(\Omega)]^3$ is required to enforce $\Di \bm{D}=\vb{0}$ for the remaining lowest-order $\RT^0$ shape functions in the mixed formulation since
\begin{align*}
    \Di [\RT^p_0]^3 = \Di [\RT^0]^3 = [P^0(\Omega)]^3 \, .
\end{align*} 
In the numerical examples we use NGSolve for the high-order elements where such high-order divergence-free shape functions exist following the construction of \cite{Zaglmayr2006}.

\begin{remark}
	The polynomial order of the sequence can be increased by using the N\'ed\'elec elements of the first type $\Ned^1_{I}$ instead of $\Ned^1_{II}$, increasing the range of the curl differential operator. We note that both types yield a tangential projection of the same polynomial power on the outer boundaries of the element. Similar to the Raviart--Thomas and Brezzi--Douglas--Marini elements there holds
	\begin{align}
	   [P^0(\Omega)]^3\subsetneq \Ned_{I}^0\subsetneq \Ned_{II}^1=[P^1(\Omega)]^3 \subsetneq \Ned_{I}^1 \subsetneq  \Ned_{II}^2=[P^2(\Omega)]^3 \subsetneq\dots \,.
	\end{align}
\end{remark}

\begin{figure}
	\centering
	\begin{subfigure}{1.\linewidth}
	\centering
	    \begin{tikzpicture}[scale = 0.6][line cap=round,line join=round,>=triangle 45,x=1.0cm,y=1.0cm]
		\clip(3,3) rectangle (24.5,10);
		\draw (3,9) node[anchor=north west] {$[\mathit{H}^1(\Omega)]^3$};
		\draw (8.7,9) node[anchor=north west] {$\HC{, \Omega}$};
		\draw [->,line width=1.5pt] (5.7,8.5) -- (8.5,8.5);
		\draw (6.5,9.5) node[anchor=north west] {$\D$};
		\draw [->,line width=1.5pt] (12.2,8.5) -- (15.2,8.5);
		\draw (12.8,9.5) node[anchor=north west] {$\Curl$};
		\draw (15.5,9) node[anchor=north west]  {$\HD{,\Omega}$};
		\draw [->,line width=1.5pt] (18.8,8.5) -- (21.8,8.5);
		\draw (19.4,9.5) node[anchor=north west] {$\Di$};
		\draw (21.8,9) node[anchor=north west] {$[\Le(\Omega)]^3$};
		
		\draw (3,5) node[anchor=north west] {$[\Po^1(\Omega)]^3$};
		\draw (9.,5) node[anchor=north west] {$[\Ned_{I}^0(\Omega)]^3$};
		\draw [->,line width=1.5pt] (5.7,4.5) -- (8.5,4.5);
		\draw (6.5,5.5) node[anchor=north west] {$\D$};
		\draw [->,line width=1.5pt] (12.2,4.5) -- (15.2,4.5);
		\draw (12.8,5.5) node[anchor=north west] {$\Curl$};
		\draw (15.5,5) node[anchor=north west]  {$[\RT^0(\Omega)]^3$};
		\draw [->,line width=1.5pt] (18.8,4.5) -- (21.8,4.5);
		\draw (19.4,5.5) node[anchor=north west] {$\Di$};
		\draw (21.8,5) node[anchor=north west] {$[\Po^0(\Omega)]^3$};
		
		\draw [->,line width=1.5pt] (4.5,7.8) -- (4.5,5.2);
		\draw (4.5,6.5) node[anchor=east] {$\Pi^1_g$};
		\draw [->,line width=1.5pt] (10.5,7.8) -- (10.5,5.2);
		\draw (10.5,6.5) node[anchor=east] {$\Pi^{I,0}_c$};
		\draw [->,line width=1.5pt] (17,7.8) -- (17,5.2);
		\draw (17,6.5) node[anchor=east] {$\Pi^{\RT,0}_d$};
		\draw [->,line width=1.5pt] (23,7.8) -- (23,5.2);
		\draw (23,6.5) node[anchor=east] {$\Pi^0_o$};
	\end{tikzpicture}
	\end{subfigure}
	\begin{subfigure}{1.\linewidth}
	\centering
	    \begin{tikzpicture}[scale = 0.6][line cap=round,line join=round,>=triangle 45,x=1.0cm,y=1.0cm]
		\clip(3,3) rectangle (24.5,10);
		\draw (3,9) node[anchor=north west] {$[\mathit{H}^1(\Omega)]^3$};
		\draw (8.7,9) node[anchor=north west] {$\HC{, \Omega}$};
		\draw [->,line width=1.5pt] (5.7,8.5) -- (8.5,8.5);
		\draw (6.5,9.5) node[anchor=north west] {$\D$};
		\draw [->,line width=1.5pt] (12.2,8.5) -- (15.2,8.5);
		\draw (12.8,9.5) node[anchor=north west] {$\Curl$};
		\draw (15.5,9) node[anchor=north west]  {$\HD{,\Omega}$};
		\draw [->,line width=1.5pt] (18.8,8.5) -- (21.8,8.5);
		\draw (19.4,9.5) node[anchor=north west] {$\Di$};
		\draw (21.8,9) node[anchor=north west] {$[\Le(\Omega)]^3$};
		
		\draw (3,5) node[anchor=north west] {$[\Po^2(\Omega)]^3$};
		\draw (9.,5) node[anchor=north west] {$[\Ned_{II}^1(\Omega)]^3$};
		\draw [->,line width=1.5pt] (5.7,4.5) -- (8.5,4.5);
		\draw (6.5,5.5) node[anchor=north west] {$\D$};
		\draw [->,line width=1.5pt] (12.2,4.5) -- (15.2,4.5);
		\draw (12.8,5.5) node[anchor=north west] {$\Curl$};
		\draw (15.5,5) node[anchor=north west]  {$[\RT^0(\Omega)]^3$};
		\draw [->,line width=1.5pt] (18.8,4.5) -- (21.8,4.5);
		\draw (19.4,5.5) node[anchor=north west] {$\Di$};
		\draw (21.8,5) node[anchor=north west] {$[\Po^0(\Omega)]^3$};
		
		\draw [->,line width=1.5pt] (4.5,7.8) -- (4.5,5.2);
		\draw (4.5,6.5) node[anchor=east] {$\Pi^2_g$};
		\draw [->,line width=1.5pt] (10.5,7.8) -- (10.5,5.2);
		\draw (10.5,6.5) node[anchor=east] {$\Pi^{II,1}_c$};
		\draw [->,line width=1.5pt] (17,7.8) -- (17,5.2);
		\draw (17,6.5) node[anchor=east] {$\Pi^{\RT,0}_d$};
		\draw [->,line width=1.5pt] (23,7.8) -- (23,5.2);
		\draw (23,6.5) node[anchor=east] {$\Pi^0_o$};
	\end{tikzpicture}
	\end{subfigure}
	\caption{The de Rham complex for linear (upper) and quadratic (lower) sequences. The differential and interpolation operators commute between the continuous and the discrete spaces.}
	\label{fig:comm}
\end{figure} 

In the Lax--Milgram setting we define $\X^h=V^h\times U^h$. Due to $\X^h\subset \X$ the solvability of the discrete problem follows directly from the continuous case \cite{Bochev,Ciarl78}. We use Cea's lemma for the quasi-best approximation as a basis for the convergence estimate
\begin{align*}
    \|\{\vb{u},\bm{P}\}-\{\vb{u}^h,\bm{P}^h\}\|_{\X}^2 \leq c(\Lc,\muma,\Ce,\Cc,\Cm)\inf\limits_{\{\delta\vb{u}^h,\delta\bm{P}^h\}\in\X^h }\|\{\vb{u},\bm{P}\}-\{\delta\vb{u}^h,\delta\bm{P}^h\}\|_{\X}^2.
\end{align*}

\begin{remark}
\label{rem:projection_ops}
    When using the canonical interpolation operators in the de Rham complex depicted in Figure~\ref{fig:comm} additional smoothness of the function spaces has to be assumed, as e.g., point evaluation is not well-defined for functions $u\in \Hone(\Omega)$, $\Omega\subset\mathbb{R}^d$ in dimension $d>1$. First commuting, but non-local projections without additional regularity assumptions were constructed in \cite{Schoe05} and \cite{CW08}. Very recently, local $\Le$-bounded and commuting projection operators from the function spaces into the finite element spaces have been established in \cite{AG21}. Therefore, in the theoretical proofs we will make use of these novel projection operators, whereas for the finite element implementations the usual canonical operators are considered.
\end{remark}
\begin{lemma}
\label{lem:conv_primal1}
Assume that the exact solution $\{\vb{u},\bm{P}\}$ is in $\mathit{H}^{p+1}(\Omega)\times \mathit{H}^p(\Curl, \Omega)$, where $\mathit{H}^p(\Curl, \Omega)=\{\bm{P}\in [\mathit{H}^p(\Omega)]^{3\times 3}\,|\, \Curl\bm{P}\in [\mathit{H}^p(\Omega)]^{3\times 3}\}$. If  $[P^p(\Omega)]^3\subset V^h$ and $U^h=[\Ned_{I}^{p-1}]^3$ then the discrete solution $\{\vb{u}^h,\bm{P}^h\}\in\X^h$ converges with optimal rate
\begin{align}
\|\vb{u}-\vb{u}^h\|_{\Hone}+\|\bm{P}-\bm{P}^h\|_{\HC{}}\leq c(\Lc,\muma,\Ce,\Cc,\Cm) \, h^p.
\end{align}
\end{lemma}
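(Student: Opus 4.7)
The plan is to combine the quasi-best approximation estimate from Cea's lemma (already stated just above the lemma) with standard polynomial interpolation error bounds in the de Rham complex, using the commuting interpolation operators $\Pi^p_g$, $\Pi^{I,p-1}_c$, $\Pi^{\RT,p-1}_d$ from Figure~\ref{fig:comm} (respectively their $\Le$-bounded local variants from \cite{AG21} to avoid the regularity issues mentioned in Remark~\ref{rem:projection_ops}).

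First, by Cea's lemma it suffices to exhibit \emph{some} pair $\{\delta\vb{u}^h,\delta\bm{P}^h\}\in\X^h$ with the desired approximation rate. Take $\delta\vb{u}^h=\Pi^p_g\vb{u}$ and $\delta\bm{P}^h=\Pi^{I,p-1}_c\bm{P}$ applied row-wise. The standard Bramble--Hilbert type estimate for Lagrangian interpolation on $[\Po^p]^3\subset V^h$ yields
\begin{align*}
\|\vb{u}-\Pi^p_g\vb{u}\|_{\Hone}\leq c\, h^p\,\|\vb{u}\|_{\mathit{H}^{p+1}},
\end{align*}
under the assumption $\vb{u}\in [\mathit{H}^{p+1}(\Omega)]^3$. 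For the microdistortion part, the analogous estimate for the N\'ed\'elec interpolant of the first kind of order $p-1$ gives
\begin{align*}
\|\bm{P}-\Pi^{I,p-1}_c\bm{P}\|_{\Le}\leq c\, h^p\,\|\bm{P}\|_{\mathit{H}^p}.
\end{align*}

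The key step, which is where the de Rham structure really enters, is to control $\|\Curl(\bm{P}-\Pi^{I,p-1}_c\bm{P})\|_{\Le}$. Here the commutativity of Figure~\ref{fig:comm} is essential: one has $\Curl\Pi^{I,p-1}_c\bm{P}=\Pi^{\RT,p-1}_d\Curl\bm{P}$, so that
\begin{align*}
\|\Curl(\bm{P}-\Pi^{I,p-1}_c\bm{P})\|_{\Le}=\|\Curl\bm{P}-\Pi^{\RT,p-1}_d\Curl\bm{P}\|_{\Le}\leq c\, h^p\,\|\Curl\bm{P}\|_{\mathit{H}^p},
\end{align*}
using the Raviart--Thomas interpolation estimate together with the hypothesis $\Curl\bm{P}\in [\mathit{H}^p(\Omega)]^{3\times 3}$. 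Adding the two bounds gives $\|\bm{P}-\Pi^{I,p-1}_c\bm{P}\|_{\HC{}}\leq c\, h^p(\|\bm{P}\|_{\mathit{H}^p}+\|\Curl\bm{P}\|_{\mathit{H}^p})$.

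Putting everything back into Cea's lemma produces the claimed rate, with the constant absorbing the continuity/coercivity constants from Theorem~\ref{thm:ex_un_primal} (which explains the dependence on $\Lc,\muma,\Ce,\Cc,\Cm$). The main obstacle is really the Curl estimate: it does not follow from $\bm{P}\in [\mathit{H}^p]^{3\times 3}$ alone, which is why the regularity space $\mathit{H}^p(\Curl,\Omega)$ is introduced in the hypothesis, and it is precisely the commuting diagram that lets us reduce the Curl error of an $\HC{}$-interpolant to an $\Le$-error of an $\HD{}$-interpolant of already known $\mathit{H}^p$-regularity. Using the local projectors of \cite{AG21} in place of the canonical Nédélec/RT interpolants makes the same argument go through without any additional smoothness assumption beyond $\mathit{H}^p$.
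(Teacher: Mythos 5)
Your proof is correct and follows essentially the same approach as the paper: Cea's lemma combined with the commuting diagram property $\Curl\Pi^{I,p-1}_c\bm{P}=\Pi^{\RT,p-1}_d\Curl\bm{P}$ to reduce the $\Curl$-error of the N\'ed\'elec interpolant to an $\Le$-error of the Raviart--Thomas interpolant, using the regularity-free projectors of Remark~\ref{rem:projection_ops}. The only cosmetic difference is that the paper states the final bound in Sobolev semi-norms $|\cdot|_{H^p}$ rather than full norms.
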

\begin{proof}
During the proof $\Pi_c^{I,p-1}$ denotes, with an abuse of notation, the projection operator without extra regularity assumptions, see Remark~\ref{rem:projection_ops}. With Cea's lemma and the approximation properties of the interpolation operators \cite{Ciarl78,BBF13} we obtain
\begin{align*}
    \|\{\vb{u},\bm{P}\}-\{\vb{u}^h,\bm{P}^h\}\|_{\X}^2 &\leq c\inf\limits_{\{\delta\vb{u}^h,\delta\bm{P}^h\}\in\X^h }\|\{\vb{u},\bm{P}\}-\{\delta\vb{u}^h,\delta\bm{P}^h\}\|_{\X}^2\\
    &\leq c\Big(\|\vb{u}-\Pi_g^p\vb{u}\|_{\Hone}^2 + \|\bm{P} - \Pi_c^{I,p-1}\bm{P}\|_{\Le}^2 + \|\Curl\bm{P} - \Curl\Pi_c^{I,p-1}\bm{P}\|_{\Le}^2  \Big)\\
    &= c\Big(\|\vb{u}-\Pi_g^p\vb{u}\|_{\Hone}^2 + \|\bm{P} - \Pi_c^{I,p-1}\bm{P}\|_{\Le}^2 + \|(\mathrm{id}-\Pi_d^{p-1})(\Curl\bm{P})\|_{\Le}^2  \Big)\\
    &\leq c\, h^{2p}\Big( |\vb{u}|^2_{H^{p+1}} + |\bm{P}|^2_{H^{p}} + |\Curl\bm{P}|^2_{H^{p}} \Big),
\end{align*}
where $|\cdot|_{H^p}$ denotes the standard Sobolev semi-norm.
\end{proof}
When using the N\'ed\'elec elements of second type $\Ned_{II}^{p-1}$ instead of $\Ned_{I}^{p-1}$, we lose one order of convergence for the microdistortion tensor due to the decreased range of the curl. This should also lead
to sub-optimal convergence of the displacement. For example, the quadratic sequence depicted in Figure~\ref{fig:comm} would only have linear convergence for $\vb{u}$ although quadratic elements are used. In the extreme case of vanishing characteristic length $\Lc=0$ the optimal convergence rates are achieved since only the $\Le$-norm of $\bm{P}$ is considered, compare Section~\ref{sec:lczero}. On the subspace $U_0=\{\vb{v}\in\Hc{, \Omega}\,|\, \curl\vb{v}=0\}$ the N\'ed\'elec elements of first and second kind coincide, $\Ned_I^p = \Ned_{II}^p$, leading to optimal rates, which corresponds to the limit case $\Lc=\infty$. However, in the numerical experiments we continued to observe improved convergence rates for the $\Le$-norm of $\bm{P}$ and $\Hone$-norm of $\vb{u}$ for $\Lc\in(0,\infty)$. Under the assumption of an $s$-regular problem we can prove improved estimates for these norms with an Aubin-Nitsche technique. We call Problem~\ref{eq:primal_problem} $s$-regular if there holds for the solution $\{\vb{u},\bm{P}\}\in X$ with $s\in (0,1]$
\begin{align}
|\vb{u}|_{H^{1+s}} + \|\bm{P}\|_{H^s(\mathrm{Curl})} \leq \big(\|\vb{f}\|_{\Le} + \|\bm{M}\|_{\Le}\big)\, ,
\end{align}
where $\|\bm{P}\|^2_{H^s(\mathrm{Curl})}=\|\bm{P}\|^2_{H^s}+\|\Curl\bm{P}\|^2_{H^s}$.

\begin{corollary}
\label{cor:aubin_nitsche}
Adopt the assumptions from Lemma~\ref{lem:conv_primal1} only changing $U^h=[\Ned_{II}^{p-1}]^3$ and $p>1$. Further assume that Problem~\ref{eq:primal_problem} is $s$-regular with $s\in (0,1]$. Then
\begin{align}
\|\vb{u}-\vb{u}^h\|_{\Hone}+\|\bm{P}-\bm{P}^h\|_{\Le}\leq c\, (\Lc,\muma,\Ce,\Cc,\Cm) \, h^{p-1+s} \, .
\end{align}
\end{corollary}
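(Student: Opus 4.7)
The plan is to combine an Aubin--Nitsche duality step with a discrete coercivity argument on the displacement slice of $a(\cdot,\cdot)$. The starting point is Cea's lemma: since $\Ned_{II}^{p-1}=[P^{p-1}]^3$ produces curls of polynomial degree only $p-2$, the best $\HC{}$-approximation of $\bm{P}$ is just $O(h^{p-1})$, so Cea's lemma gives
\begin{align*}
\|\{e_\vb{u},e_\bm{P}\}\|_{\X}=\|\{\vb{u}-\vb{u}^h,\bm{P}-\bm{P}^h\}\|_{\X}\leq c\,h^{p-1},
\end{align*}
whereas the weaker target norm $\|e_\vb{u}\|_{\Hone}+\|e_\bm{P}\|_{\Le}$ should be winnable at the improved rate $h^{p-1+s}$.

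For the duality step, for arbitrary $L^2$-data $(\vb{g},\bm{G})\in [\Le(\Omega)]^3\times[\Le(\Omega)]^{3\times 3}$ I would introduce the dual problem
\begin{align*}
a(\{\vb{w},\bm{Q}\},\{\varphi,\Psi\})=\int_\Omega \langle\vb{w},\vb{g}\rangle+\langle\bm{Q},\bm{G}\rangle\,\dd X\qquad\forall\,\{\vb{w},\bm{Q}\}\in\X,
\end{align*}
which is uniquely solvable by \cref{thm:ex_un_primal} (the bilinear form is symmetric, so the adjoint coincides with the primal). The $s$-regularity assumption gives $|\varphi|_{H^{1+s}}+\|\Psi\|_{H^s(\Curl)}\leq c(\|\vb{g}\|_{\Le}+\|\bm{G}\|_{\Le})$. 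Setting $\{\vb{w},\bm{Q}\}=\{e_\vb{u},e_\bm{P}\}$, using the Galerkin orthogonality $a(\{e_\vb{u},e_\bm{P}\},\{\delta\vb{u}^h,\delta\bm{P}^h\})=0$ to subtract the low-regularity commuting interpolants $(\Pi_g^p\varphi,\Pi_c^{II,p-1}\Psi)\in\X^h$ from \cref{rem:projection_ops}, and combining continuity of $a$ with the $h^s$-approximation bounds on $H^{1+s}$- and $H^s(\Curl)$-data yield
\begin{align*}
\int_\Omega\langle e_\vb{u},\vb{g}\rangle+\langle e_\bm{P},\bm{G}\rangle\,\dd X &= a(\{e_\vb{u},e_\bm{P}\},\{\varphi-\Pi_g^p\varphi,\Psi-\Pi_c^{II,p-1}\Psi\}) \\
&\leq c\,h^{p-1+s}(\|\vb{g}\|_{\Le}+\|\bm{G}\|_{\Le}).
\end{align*}
Taking the supremum over $L^2$-normalised data then gives $\|e_\vb{u}\|_{\Le}+\|e_\bm{P}\|_{\Le}\leq c\,h^{p-1+s}$.

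To promote the $\vb{u}$-estimate from $L^2$ to $\Hone$, I would fix the best $\Hone$-approximation $\widetilde{\vb{u}}^h\in V^h$ of $\vb{u}$, set $\zeta=\vb{u}^h-\widetilde{\vb{u}}^h\in V^h$, and note that $(v,\bm{0})\mapsto a(\{v,\bm{0}\},\{v,\bm{0}\})$ is coercive on $V^h\subset [\Honez(\Omega)]^3$ by the same Korn and Poincar\`e--Friedrich argument used in \cref{thm:ex_un_primal}. Since $(\zeta,\bm{0})\in\X^h$, coercivity, bilinearity, and the Galerkin orthogonality $a(\{e_\vb{u},e_\bm{P}\},\{\zeta,\bm{0}\})=0$ combine (via the decomposition $\{\zeta,\bm{0}\}=\{\vb{u}-\widetilde{\vb{u}}^h,\bm{0}\}-\{e_\vb{u},\bm{0}\}$) into
\begin{align*}
c\|\zeta\|_{\Hone}^2\leq a(\{\zeta,\bm{0}\},\{\zeta,\bm{0}\}) = a(\{\vb{u}-\widetilde{\vb{u}}^h,\bm{0}\},\{\zeta,\bm{0}\}) + a(\{\bm{0},e_\bm{P}\},\{\zeta,\bm{0}\}),
\end{align*}
so that continuity yields $\|\zeta\|_{\Hone}\leq c(\|\vb{u}-\widetilde{\vb{u}}^h\|_{\Hone}+\|e_\bm{P}\|_{\Le})\leq c(h^p|\vb{u}|_{H^{p+1}}+h^{p-1+s})\leq c\,h^{p-1+s}$ for $s\in(0,1]$. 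A final triangle inequality closes $\|e_\vb{u}\|_{\Hone}\leq c\,h^{p-1+s}$.

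The main obstacle, as I see it, is the $h^s$-interpolation bound for $\Pi_c^{II,p-1}$ acting on $H^s(\Curl)$-data with fractional $s$: the canonical N\'ed\'elec-II degrees of freedom are not well-defined on such rough fields, forcing the use of the low-regularity commuting projectors cited in \cref{rem:projection_ops}. Everything else reduces to symmetry of $a$, Galerkin orthogonality of the primal problem, and the coercivity machinery already developed for \cref{thm:ex_un_primal}.
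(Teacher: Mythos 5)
Your argument is correct and follows essentially the same Aubin--Nitsche strategy as the paper. The $L^2$ step is a cosmetic variant: the paper plugs the error $\{\vb{u}-\vb{u}^h,\bm{P}-\bm{P}^h\}$ directly into the dual problem's right-hand side and divides through, whereas you use arbitrary $L^2$ data $(\vb{g},\bm{G})$ and take a supremum---both are the standard duality argument and give the same bound. For the $\Hone$ promotion, the paper exploits the explicit $(\Ce+\Cc)$-structure of $a(\cdot,\cdot)$ tested against $\{\vb{v}^h,\bm{0}\}$ and represents the induced energy norm as a supremum over discrete test functions, while you invoke coercivity of the displacement slice directly on $\zeta=\vb{u}^h-\widetilde{\vb{u}}^h$; these are interchangeable phrasings of the same coercivity-plus-continuity reasoning, and your decomposition $a(\{\zeta,\bm{0}\},\{\zeta,\bm{0}\})=a(\{\vb{u}-\widetilde{\vb{u}}^h,\bm{0}\},\{\zeta,\bm{0}\})+a(\{\bm{0},e_{\bm{P}}\},\{\zeta,\bm{0}\})$ via Galerkin orthogonality is exactly what is needed. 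You also correctly flag the genuine technical point---the fractional-regularity projection $\Pi_c^{II,p-1}$ must be the $L^2$-bounded commuting projector from \cref{rem:projection_ops}, not the canonical one---which the paper handles the same way.
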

\begin{proof}
We solve the following dual problem, where the differences $\vb{u}-\vb{u}^h$ and $\bm{P}-\bm{P}^h$ are used as right-hand side: find $\{\vb{w},\bm{Q}\}\in X$ such that
\begin{align*}
    a(\{\delta \vb{u},\delta \bm{P}\},\{\vb{w},\bm{Q}\}) = \int_{\Omega} \langle \vb{u}-\vb{u}^h,\delta \vb{u}\rangle + \langle\bm{P}-\bm{P}^h,\delta\bm{P}\rangle\,\dd X\qquad \forall \{\delta \vb{u},\delta \bm{P}\}\in \X \, .
\end{align*}
By using the test-function $\{\delta \vb{u},\delta \bm{P}\} = \{\vb{u}-\vb{u}^h, \bm{P}-\bm{P}^h\}$ and the Galerkin-orthogonality $a(\{\vb{u}-\vb{u}^h, \bm{P}-\bm{P}^h\},\{\vb{v}^h,\bm{Q}^h\})=0$ for all $\{\vb{v}^h,\bm{Q}^h\}\in X^h$ we can insert the corresponding natural interpolation operators $\Pi_g^p\vb{w}$ and $\Pi_c^{p-1}\bm{Q}$ (Remark~\ref{rem:projection_ops}) leading to
\begin{align*}
\|\vb{u}-\vb{u}^h\|_{\Le}^2+\|\bm{P}-\bm{P}^h\|_{\Le}^2 &= a(\{\vb{u}-\vb{u}^h, \bm{P}-\bm{P}^h\},\{\vb{w}-\Pi_g^p\vb{w},\bm{Q}-\Pi_c^{p-1}\bm{Q}\})\\
&\leq c\|\{\vb{u}-\vb{u}^h, \bm{P}-\bm{P}^h\}\|_X (\|\vb{w}-\Pi_g^p\vb{w}\|_{\Hone}+\|\bm{Q}-\Pi_c^{p-1}\bm{Q}\|_{\HC{}})\\
&\leq c\|\{\vb{u}-\vb{u}^h, \bm{P}-\bm{P}^h\}\|_Xh^s(|\vb{w}|_{H^{1+s}}+\|\bm{Q}\|_{H^s(\mathrm{Curl})})\\
&\leq ch^s\|\{\vb{u}-\vb{u}^h, \bm{P}-\bm{P}^h\}\|_X(\|\vb{u}-\vb{u}^h\|_{\Le}+\|\bm{P}-\bm{P}^h\|_{\Le})\, .
\end{align*}
Using Young's inequality on the left side, dividing through $\|\vb{u}-\vb{u}^h\|_{\Le}+\|\bm{P}-\bm{P}^h\|_{\Le}$, and using that $\|\{\vb{u}-\vb{u}^h, \bm{P}-\bm{P}^h\}\|_X\leq ch^{p-1}$ for $\bm{P}^h\in [\Ned_{II}^{p-1}]^3$ yields the claim for the $\Le$-norm of $\vb{u}$ and $\bm{P}$. For the $\Hone$-norm we first note that with the Galerkin-orthogonality, adding and subtracting $\Pi_g^p\vb{u}$, and the specific structure of the bilinear form we obtain
\begin{align*}
0 &= a(\{\vb{u}-\vb{u}^h, \bm{P}-\bm{P}^h\},\{\vb{v}^h,\bm{0}\}) = \int_{\Omega}\langle(\Ce+\Cc)\big( \D\,(\vb{u}-\vb{u}^h)-(\bm{P}-\bm{P}^h)\big),\D\,\vb{v}^h\rangle\,\dd X\\
&=a(\{\vb{u}-\Pi_g^p\vb{u}, \bm{P}-\bm{P}^h\},\{\vb{v}^h,\bm{0}\}) + \int_{\Omega}\langle(\Ce+\Cc)\D\,(\Pi_g^p\vb{u}-\vb{u}^h),\D\,\vb{v}^h\rangle\,\dd X \, .
\end{align*}
Thus, we can estimate
\begin{align*}
    \sqrt{\int_{\Omega}\langle(\Ce+\Cc) \D\,(\Pi_g^{p}\vb{u}-\vb{u}^h),\D\,(\Pi_g^{p}\vb{u}-\vb{u}^h)\rangle\,\dd X} &= \sup\limits_{\vb{v}^h\in V^h}\frac{\int_{\Omega} \langle(\Ce+\Cc)\D\,(\Pi_g^{p}\vb{u}-\vb{u}^h),\D\vb{v}^h\rangle\,\dd X}{\sqrt{\int_{\Omega}\langle(\Ce+\Cc) \D\,\vb{v}^h,\D\,\vb{v}^h\rangle\,\dd X}}\\
    &= \sup\limits_{\vb{v}^h\in V^h} \frac{-a(\{\vb{u}-\Pi_g^{p}\vb{u},\bm{P}-\bm{P}^h\},\{\vb{v}^h,\bm{0}\})}{\sqrt{\int_{\Omega}\langle(\Ce+\Cc) \D\,\vb{v}^h,\D\,\vb{v}^h\rangle\,\dd X}}\\
    &\leq c(\|\vb{u}-\Pi_g^{p}\vb{u}\|_{\Hone}+\|\bm{P}-\bm{P}^h\|_{\Le})\leq ch^{p-1+s}\, ,
\end{align*}
where we used Cauchy-Schwarz and the explicit structure of the bilinear form. With the triangle inequality $\|\D\,(\vb{u}-\vb{u}^h)\|_{\Le}\leq  \|\D\,(\Pi_g^{p}\vb{u}-\vb{u}^h)\|_{\Le}+\|\D\,(\vb{u}-\Pi_g^{p}\vb{u})\|_{\Le}$ and positive definiteness of $\Ce+\Cc$ (in combination with the generalized Korn's inequality for incompatible tensor fields if $\Cc=0$) the claim follows.
\end{proof}

\begin{remark}
We do not discuss the $s$-regularity of Problem~\ref{eq:primal_problem} in this work, but mention that for the Laplace as well as for Maxwell's equations there holds $s=1$ for convex domains $\Omega$.

Corollary~\ref{cor:aubin_nitsche} also shows that we cannot expect improved convergence for $\vb{u}$ in the $\Le$-norm compared to the $\Hone$-norm, which is confirmed by the numerics.
\end{remark}

Unfortunately, the approximation constant depends a priori on $\Lc$. As we are interested in the limit $\Lc\to\infty$ robust estimates with respect to $\Lc$ are desirable. We show such an estimate with the help of the equivalent mixed formulation of the problem.


Normally, in contrast to the primal formulation, the kernel coercivity as well as the LBB-condition for the discrete case are not inherited from the continuous proof. However, thanks to the de Rham complex and a discrete Helmholtz decomposition \cite{AFW97}, these properties hold also in the discrete case as long as the appropriate conforming finite element spaces are employed \cite{BBF13}.
Consequently, Cea's lemma yields
\begin{align*}
    \|\{\vb{u},\bm{P}, \vb{q}\}-\{\vb{u}^h,\bm{P}^h,\vb{q}^h\}\|_{\tilde{\X}}^2 + \|\bm{D}-\bm{D}^h\|_{\HD{}}^2 \leq  c\inf\limits_{ (\{\delta\vb{u}^h,\delta\bm{P}^h,\delta\vb{q}^h\},\delta\bm{D})\in \tilde{X}\times \HD{}} \begin{aligned}
        (&\|\{\vb{u},\bm{P}, \vb{q}\}-\{\delta\vb{u}^h,\delta\bm{P}^h,\delta\vb{q}^h\}\|_{\tilde{\X}}^2\\
        &+ \|\bm{D}-\delta\bm{D}^h\|_{\HD{}}^2)
    \end{aligned}
\end{align*}

\begin{lemma}
\label{lem:mixed_fe_conv}
Assume that the exact solution $\{\vb{u},\bm{P},\bm{D},\vb{q}\}$ is in $\mathit{H}^{p+1}(\Omega)\times \mathit{H}^p(\Curl, \Omega)\times \mathit{H}^p(\Omega)\times \Hone(\Omega)$. If  $[P^p(\Omega)]^3\subset V^h$, $U^h=[\Ned_{I}^{p-1}]^3$,  $\Sigma^h=[\RT_0^{p-1}]^3$, and $[P^{0}(\Omega)]^3= Q^h$ then the discrete solution $(\{\vb{u}^h,\bm{P}^h,\vb{q}^h\},\bm{D}^h)\in\tilde{\X}^h\times \Sigma^h$ converges with optimal rate
\begin{align}
\label{eq:conv_rate_mixed}
\|\vb{u}-\vb{u}^h\|_{\Hone}+\|\bm{P}-\bm{P}^h\|_{\HC{}}+\|\bm{D}-\bm{D}^h\|_{\HD{}}+ h^{p-1}\|\vb{q}-\vb{q}^h\|_{\Le}\leq c \,  h^p\qquad c\neq c(\Lc) \, .
\end{align}
Additionally, with $\{\vb{u}_{\infty},\bm{P}_{\infty},\bm{D}_{\infty},\vb{q}_{\infty}\}$ for the (smooth) solution of the limit problem one obtains
	\begin{align}
	\label{eq:conv_lc_h}
	& \|\vb{u}_{\infty}-\vb{u}^h\|_{\Hone} + \|\bm{P}_{\infty}-\bm{P}^h\|_{\HC{}} + \|\bm{D}_{\infty}-\bm{D}^h\|_{\HD{}}\leq \dfrac{c_1}{\Lc^2} + c_2 \, h^p\,,\quad \|\vb{q}_{\infty}-\vb{q}^h\|_{\Le}\leq \dfrac{c_1}{\Lc^2} + c_2 \, h \, .
	\end{align}
\end{lemma}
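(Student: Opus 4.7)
The plan is to view \eqref{eq:mixed_problem2} as a saddle-point problem and invoke the discrete version of the extended Brezzi theorem, so that the proof reduces to (i) checking the three Brezzi hypotheses with constants independent of $\Lc$ on the chosen finite element spaces, and (ii) plugging in the standard approximation estimates of the canonical interpolation operators of the discrete de Rham complex depicted in Figure~\ref{fig:comm}. The $\Lc$-independence is exactly the point of the mixed reformulation: the continuous proof of Theorem~\ref{thm:ex_un_mixed2} transfers verbatim to the discrete level, provided that the kernel of $\tilde{b}$ restricted to $\tilde{\X}^h$ and the range of $\tilde{b}$ in $\Sigma^h$ match those of the continuous problem, which is precisely what the commuting diagram buys us.

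In more detail, I would first verify the two nontrivial Brezzi hypotheses discretely. Kernel coercivity: if $\{\vb{u}^h,\bm{P}^h,\vb{q}^h\}\in\ker(\tilde{b}|_{\tilde{\X}^h\times\Sigma^h})$, the exactness $\Curl\bigl([\Ned_I^{p-1}]^3\bigr)=[\RT_0^{p-1}\cap\ker\Di]^3$ and $\Di\bigl([\RT_0^{p-1}]^3\bigr)=[P^0(\Omega)]^3= Q^h$ force $\Curl\bm{P}^h=\bm{0}$ and $\vb{q}^h=\vb{0}$, exactly as in the continuous kernel, after which $a(\cdot,\cdot)$ is coercive by the argument of Theorem~\ref{thm:ex_un_primal}. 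Discrete LBB: given $\bm{D}^h\in\Sigma^h$, apply the discrete Helmholtz decomposition provided by the de Rham sequence, $\bm{D}^h=\Curl\bm{Q}^h+\D\vb{\Psi}^h$ with $\bm{Q}^h\in U^h$ and $\vb{\Psi}^h\in V^h$, and mimic the continuous choice of test functions in the proof of Theorem~\ref{thm:ex_un_mixed2}; the constants are $\Lc$-independent because $a$, $b$, $d$ are. This gives a discrete inf-sup constant depending only on material tensors, not on $\Lc$.

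Next I would apply Brezzi's quasi-best approximation estimate (Cea's lemma in the mixed setting) with these $\Lc$-independent constants and insert the canonical projections of Figure~\ref{fig:comm} as competitors: $\Pi_g^p\vb{u}$, $\Pi_c^{I,p-1}\bm{P}$, $\Pi_d^{\RT,p-1}\bm{D}$ and the $\Le$-projection $\Pi_o^0\vb{q}$. Using the standard interpolation estimates and the commuting property (so that $\Curl\Pi_c^{I,p-1}\bm{P}=\Pi_d^{\RT,p-1}\Curl\bm{P}$) exactly as in Lemma~\ref{lem:conv_primal1}, all four error terms are bounded by $h^p$ times Sobolev semi-norms of the exact solution, except $\|\vb{q}-\Pi_o^0\vb{q}\|_{\Le}\leq c\,h\,|\vb{q}|_{\Hone}$, which only produces $h$ and explains the weight $h^{p-1}$ in front of $\|\vb{q}-\vb{q}^h\|_{\Le}$ in \eqref{eq:conv_rate_mixed}. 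The bound \eqref{eq:conv_lc_h} then follows by a triangle inequality against the limit solution of \eqref{eq:mixed_problem_limit}, combining the $\Lc$-convergence estimate \eqref{eq:mixed_method_conv_Lc} ($c/\Lc^2$) with the $h^p$ (respectively $h$) discrete estimate just obtained.

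The main technical obstacle, and the reason the statement is nontrivial despite looking like a routine Brezzi argument, is securing the $\Lc$-independence of the LBB constant in the discrete case. This is equivalent to a stable discrete Helmholtz decomposition for $\HD{,\Omega}$-conforming fields, which is only available because $\Sigma^h=[\RT_0^{p-1}]^3$, the divergence-constraint space $Q^h=[P^0]^3$ and $U^h=[\Ned_I^{p-1}]^3$ sit in a complete commuting diagram with bounded commuting projections (Remark~\ref{rem:projection_ops}); with a non-conforming choice for $\Sigma^h$, or with $\Ned_{II}$ elements for $U^h$, the exactness used to annihilate $\Curl\bm{P}^h$ and $\vb{q}^h$ on the discrete kernel would fail and the constant would blow up as $\Lc\to\infty$. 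Once this ingredient is secured, the remainder is a bookkeeping of interpolation orders, where one has to remember to weight $\vb{q}$ with $h^{p-1}$ due to its piecewise-constant discretisation.
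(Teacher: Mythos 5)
Your overall framework is the right one and matches the paper: verify discrete kernel coercivity and discrete LBB via the exactness of the commuting de~Rham sequence, apply Brezzi/Cea's quasi-best approximation, insert the canonical interpolants, and finish the second bound with a triangle inequality against the limit solution together with \eqref{eq:mixed_method_conv_Lc}.

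However, there is a genuine gap in the passage from the interpolation estimates to \eqref{eq:conv_rate_mixed}. Cea's lemma in the mixed setting controls the \emph{sum} of errors in all variables by the \emph{sum} of best-approximation errors. Because $Q^h=[P^0(\Omega)]^3$ only gives $\|\vb{q}-\Pi_o^0\vb{q}\|_{\Le}\leq c\,h\,|\vb{q}|_{\Hone}$, the right-hand side of Cea is $O(h)$, not $O(h^p)$, and therefore the naive application gives only
\[
\|\vb{u}-\vb{u}^h\|_{\Hone}+\|\bm{P}-\bm{P}^h\|_{\HC{}}+\|\bm{D}-\bm{D}^h\|_{\HD{}}+\|\vb{q}-\vb{q}^h\|_{\Le}\leq c\,h,
\]
i.e.\ \emph{linear} convergence for all four fields. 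This is not the statement of the lemma, which asserts $h^p$ for $\vb{u}$, $\bm{P}$, $\bm{D}$ and $h$ only for $\vb{q}$. Your remark that the $O(h)$ term ``explains the weight $h^{p-1}$'' is therefore backwards: multiplying a linear bound for $\vb{q}$ by $h^{p-1}$ does not restore the $h^p$ rate for the other unknowns. To obtain the optimal rates one needs the additional observation the paper makes: on the subspace of divergence-free hyperstresses the solution is independent of the Lagrange multiplier $\vb{q}$, so the Cea argument can be run on the reduced problem \eqref{eq:mixed_problem} (where $\vb{q}$ is absent), giving $O(h^p)$ for $\vb{u}$, $\bm{P}$, $\bm{D}$, and the $O(h)$ estimate for $\vb{q}$ is then derived separately. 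Adding this decoupling step would close the argument; the rest of your proposal, including the verification of the discrete Brezzi conditions and the derivation of \eqref{eq:conv_lc_h}, is sound.
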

\begin{proof}
The proof of the first claim starts by following the same lines as in the primal case. The estimates for $\bm{D}$ and $\vb{q}$ are shown in detail
\begin{align*}
    \|\bm{D}-\Pi^{p-1}_d\bm{D}\|^2_{\HD{}} + \|\vb{q}-\Pi^{0}_o\vb{q}\|^2_{\Le} &=  \|\bm{D}-\Pi^{p-1}_d\bm{D}\|^2_{\Le} + \|(\mathrm{id}-\Pi^{p-1}_o)\underbrace{\Di\bm{D}}_{=0}\|^2_{\Le} + \|\vb{q}-\Pi^{0}_o\vb{q}\|^2_{\Le}\\
    &\leq c( h^{2p}|\bm{D}|^2_{H^{p}} + h^2|\vb{q}|^2_{H^{1}} ),
\end{align*}
where we used that the exact hyperstress $\bm{D}$ is divergence free. This estimate, however, would only yield linear convergence for all fields. By noting that on the subspace of divergence free hyperstresses the solution is independent of $\vb{q}$ we obtain the optimal rates for $\vb{u}^h$, $\bm{P}^h$, and $\bm{D}^h$.

For the second inequality we add and subtract the exact solution, use the triangle inequality, and \eqref{eq:mixed_method_conv_Lc} and \eqref{eq:conv_rate_mixed}.
\end{proof}

\begin{remark}
Note that only linear convergence is obtained for the correction term $\vb{q}$. If one is interested in this quantity, a cheap post-processing step can be applied element-wise to obtain a high-order approximation \cite{LS15}. For $U^h=[\Ned_{II}^{p-1}]^3$ the results of Lemma~\ref{lem:mixed_fe_conv} can directly be adapted by following the same lines as in Corollary~\ref{cor:aubin_nitsche}. 

Inequalities \eqref{eq:conv_lc_h} show that besides the model error in $\Lc$, also the approximation error has to be considered for convergence studies in the limit $\Lc\to\infty$.
\end{remark}

\section{Finite element formulations} \label{ch:4}
In this work we employ tetrahedral finite elements. The construction of $\Hone$, $\Hd{}$, $\Hc{}$, and $\Le$-conforming finite elements to obtain the lowest order combination is presented in the following in detail. Higher order elements are used from the open source finite element software NETGEN/NGSolve\footnote{www.ngsolve.org} \cite{Sch1997,Sch2014} and we refer to \cite{Zaglmayr2006} for the construction of higher order finite elements.

The elements are mapped from the reference element to the physical element using the barycentric base functions
\begin{align}
    b_1(\xi, \, \eta, \, \zeta) &= 1 - \xi - \eta - \zeta \, , \qquad  b_2(\xi, \, \eta, \, \zeta) = \xi \, , \qquad  b_3(\xi, \, \eta, \, \zeta) = \eta \, , \qquad b_4(\xi, \, \eta, \, \zeta) = \zeta \, , \label{eq:bary} \\[1ex]
	\vb{x}(\xi, \, \eta, \, \zeta) &= (1-\xi -\eta -\zeta) \, \vb{x}_1 + \xi \, \vb{x}_2 + \eta \, \vb{x}_3 + \zeta \, \vb{x}_4 \, , \qquad \bm{J} = \begin{bmatrix}
		\vb{x}_2 - \vb{x}_1 & \vb{x}_3 - \vb{x}_1 & \vb{x}_4 - \vb{x}_1
	\end{bmatrix} \, ,
	\label{eq:jacobi}
\end{align} 
where $\vb{x}_i$ are the vertex coordinates of each tetrahedron on the physical domain and $\bm{J}$ is the corresponding Jacobi matrix, see \cref{fig:map}.
The entire domain is given by the union
\begin{align}
	\Omega = \bigcup_{e=1}^n \Omega_e \subset \mathbb{R}^3 \, .
\end{align}

\begin{figure}
	\centering
	\definecolor{asl}{rgb}{0.4980392156862745,0.,1.}
	\definecolor{asb}{rgb}{0.,0.4,0.6}
	\begin{tikzpicture}
		\begin{axis}
			[
			width=30cm,height=17cm,
			view={50}{15},
			enlargelimits=true,
			xmin=-1,xmax=2,
			ymin=-1,ymax=2,
			zmin=-1,zmax=2,
			domain=-10:10,
			axis equal,
			hide axis
			]
			\draw[->, line width=1.pt, color=black](0., 0., 0.)--(1.5,0.,0.);
			\draw[color=black] (1.6,0,0) node[] {$\xi$};
			\draw[->, line width=1.pt, color=black](0., 0., 0.)--(0.,1.5,0.);
			\draw[color=black] (0.,1.6,0) node[] {$\eta$};
			\draw[->, line width=1.pt, color=black](0., 0., 0.)--(0.,0.,1.5);
			\draw[color=black] (0.,0.,1.6) node[] {$\zeta$};
			\addplot3[color=asb][line width=0.6pt,mark=*]
			coordinates {(0,0,0)(1,0,0)(0,1,0)(0,0,0)};
			\addplot3[color=asb][line width=1.pt,mark=*]
			coordinates {(0,0,0)(0,0,1)};
			\addplot3[color=asb][line width=0.6pt]coordinates {(1,0,0)(0,0,1)};
			\addplot3[color=asb][line width=0.6pt]coordinates {(0,1,0)(0,0,1)};
			\draw[color=asb] (0.2,0.2,0.15) node[anchor=south east] {$\Xi$};
			\draw[color=asb] (0,0,0) node[anchor=south east] {$_{v_{1}}$};
			\draw[color=asb] (1,0,0) node[anchor=north east] {$_{v_{2}}$};
			\draw[color=asb] (0,1,0) node[anchor=south west] {$_{v_{3}}$};
			\draw[color=asb] (0,0,1) node[anchor=north east] {$_{v_{4}}$};
			\fill[opacity=0.1, asb] (axis cs: 0,0,0) -- (axis cs: 1,0,0) -- (axis cs: 0,1,0) -- (axis cs: 0,0,1) -- cycle;
			\draw[->, line width=1.pt, color=asl](0.8,0.2,0)--(0.2,0.8,0);
			\draw[color=asl] (0.55,0.5,0.) node[anchor=west] {$\bm{\varsigma}$};
			\draw[->, line width=1.pt, color=asl](0.33,0.33,0.33)--(0.5,0.5,0.5);
			\draw[color=asl] (0.5,0.5,0.5) node[anchor=west] {$\bm{\varrho}$};
			
			\addplot3[color=asb][line width=0.6pt,mark=*]
			coordinates {(2.6,2,0.5)(2.6,3,0.5)(3,2,0.2)(2,3,1.1)};
		    \addplot3[color=asb][line width=0.6pt]coordinates {(2.6,2,0.5)(3,2,0.2)};
		    \addplot3[color=asb][line width=0.6pt]coordinates {(2.6,2,0.5)(2,3,1.1)};
		    \addplot3[color=asb][line width=0.6pt]coordinates {(2.6,3,0.5)(2,3,1.1)};
			\fill[opacity=0.1, asb] (axis cs: 2.6,2,0.5) -- (axis cs: 3,2,0.2) -- (axis cs: 2.6,3,0.5) -- (axis cs: 2,3,1.1) -- cycle;
			\draw[color=asb] (axis cs: 2.25,3.25,0.5) node[anchor=east] {$\Omega_e$};
			\draw[color=asb] (axis cs: 2.6,2,0.5) node[anchor=east] {$\vb{x}_4$};
			\draw[color=asb] (axis cs: 2.6,3,0.5) node[anchor=west] {$\vb{x}_1$};
			\draw[color=asb] (axis cs: 3,2,0.2) node[anchor=east] {$\vb{x}_2$};
			\draw[color=asb] (axis cs: 2,3,1.1) node[anchor=east] {$\vb{x}_3$};
			\draw[->, line width=1.pt, color=black](1.5, 1.5, 0.)--(2.5, 1.5,0.);
			\draw[color=black] (2.6, 1.5,0.) node[] {$x$};
			\draw[->, line width=1.pt, color=black](1.5, 1.5, 0.)--(1.5, 2.5,0.);
			\draw[color=black] (1.5, 2.6,0.) node[] {$y$};
			\draw[->, line width=1.pt, color=black](1.5, 1.5, 0.)--(1.5, 1.5,1.);
			\draw[color=black] (1.5, 1.5,1.1) node[] {$z$};
			
			\draw[->, line width=1.pt, color=asl](2.8 , 2.2 , 0.38)--(2.2 , 2.8 , 0.92);
			\draw[color=asl] (2.16 , 2.76 , 0.5) node[anchor=west] {$\bm{\tau}$};
			\draw[->, line width=1.pt, color=asl](2.508, 2.31 , 0.594)--(2.3, 2. , 0.7);
			\draw[color=asl] (2.3, 2. , 0.7) node[anchor=east] {$\bm{\nu}$};
			
			\addplot3[smooth,color=black][->, line width=1.pt]coordinates {(0.7,0.7,1.2)(1.2,1.2,1.4)(1.7,1.7,1.2)};
			\draw[color=black] (1.2,1.2,1.4) node[anchor=south] {$\vb{x}(\xi, \, \eta , \, \zeta)$};
		\end{axis}
	\end{tikzpicture}
	\caption{Barycentric mapping from the reference element to the physical domain.}
	\label{fig:map}
\end{figure}
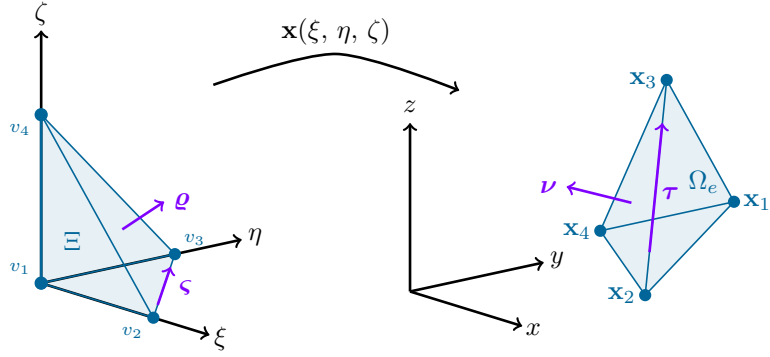

\subsection{Lagrangian base}
For the primal formulation we make use of Lagrangian base functions. These have the nodal degrees of freedom
\begin{align}
	l_{ij}(u) = \delta_{ij} \, u \at_{x_j} \, ,   
\end{align}
where $\delta_{ij}$ is the Kronecker delta.
Defining the reference element to be the unit tetrahedron
\begin{align}
    T = \{ \xi, \, \eta , \, \zeta \in [0, \, 1] \; | \; \xi + \eta + \zeta  \leq 1 \} \, ,
\end{align}
the linear Lagrangian element is given by the barycentric base functions \cref{eq:bary}.
For the quadratic polynomial space $\Po^2$, one finds the following ten base functions
\begin{subequations}
\begin{align}
	n_1(\xi,\eta,\zeta) &= 2\left(\eta + \xi + \zeta - 0.5\right) \left(\eta + \xi + \zeta - 1\right) \, , & n_2(\xi,\eta,\zeta) &= 2 \,\xi \left(\xi - 0.5\right) \, , \\
	n_3(\xi,\eta,\zeta) &= 2 \,\eta \left(\eta - 0.5\right) \, , & n_4(\xi,\eta,\zeta) &= 2\, \zeta \left(\zeta - 0.5\right) \, , \\[2ex] 
	n_5(\xi,\eta,\zeta) &=  4 \,\xi \left(1- \eta - \xi - \zeta \right) \, , & n_6(\xi,\eta,\zeta) &= 4 \,\eta \,\xi \, , \\
	n_7(\xi,\eta,\zeta) &= 4 \,\eta \left(1- \eta - \xi - \zeta \right) \, , & n_8(\xi,\eta,\zeta) &= 4\, \zeta \left(1- \eta - \xi - \zeta \right) \, , \\
	n_9(\xi,\eta,\zeta) &= 4 \,\xi \,\zeta \, , & n_{10}(\xi,\eta,\zeta) &= 4 \,\eta \,\zeta \, , 
\end{align}
\end{subequations}
where the first four are vertex base functions and the next six are edge base functions defined on the edge midpoint, see \cref{fig:lag}. 
\begin{figure}
	\centering
	\begin{subfigure}{0.3\linewidth}
		\centering
		\definecolor{asl}{rgb}{0.4980392156862745,0.,1.}
		\definecolor{asb}{rgb}{0.,0.4,0.6}
		\begin{tikzpicture}
			\begin{axis}
				[
				width=30cm,height=20cm,
				view={50}{15},
				enlargelimits=true,
				xmin=-1,xmax=2,
				ymin=-1,ymax=2,
				zmin=-1,zmax=2,
				domain=-10:10,
				axis equal,
				hide axis
				]
				\addplot3[color=asb][
				line width=0.6pt,
				mark=*
				]
				coordinates {
					(0,0,0)(1,0,0)(0,1,0)(0,0,0)
				};
				\addplot3[color=asb][
				line width=1.pt,
				mark=*
				]
				coordinates {
					(0,0,0)(0,0,1)
				};
			    \addplot3[color=blue][line width=1.pt,mark=o]
			    coordinates {(0.5,0,0)};
			    \addplot3[color=blue][line width=1.pt,mark=o]
			    coordinates {(0,0.5,0)};
			    \addplot3[color=blue][line width=1.pt,mark=o]
			    coordinates {(0,0,0.5)};
			    \addplot3[color=blue][line width=1.pt,mark=o]
			    coordinates {(0.5,0.5,0)};
			    \addplot3[color=blue][line width=1.pt,mark=o]
			    coordinates {(0,0.5,0.5)};
			    \addplot3[color=blue][line width=1.pt,mark=o]
			    coordinates {(0.5,0,0.5)};
				\addplot3[color=asb][line width=0.6pt]coordinates {(1,0,0)(0,0,1)};
				\addplot3[color=asb][line width=0.6pt]coordinates {(0,1,0)(0,0,1)};
				\draw[color=asb] (0,0,0) node[anchor=south east] {$_{n_1}$};
				\draw[color=asb] (1,0,0) node[anchor=north east] {$_{n_2}$};
				\draw[color=asb] (0,1,0) node[anchor=south west] {$_{n_3}$};
				\draw[color=asb] (0,0,1) node[anchor=north east] {$_{n_4}$};
				\draw[color=blue] (0.5,0,0) node[anchor=north east] {$_{n_5}$};
				\draw[color=blue] (0.5,0.5,0) node[anchor=north west] {$_{n_6}$};
				\draw[color=blue] (0,0.5,0) node[anchor=south east] {$_{n_7}$};
				\draw[color=blue] (0,0,0.5) node[anchor=north east] {$_{n_8}$};
				\draw[color=blue] (0.5,0,0.5) node[anchor=north east] {$_{n_9}$};
				\draw[color=blue] (0,0.5,0.5) node[anchor=south west] {$_{n_{10}}$};
				\fill[opacity=0.1, asb] (axis cs: 0,0,0) -- (axis cs: 1,0,0) -- (axis cs: 0,1,0) -- (axis cs: 0,0,1) -- cycle;
			\end{axis}
		\end{tikzpicture}
	\end{subfigure}
	\begin{subfigure}{0.3\linewidth}
		\centering
		\includegraphics[width=0.5\linewidth]{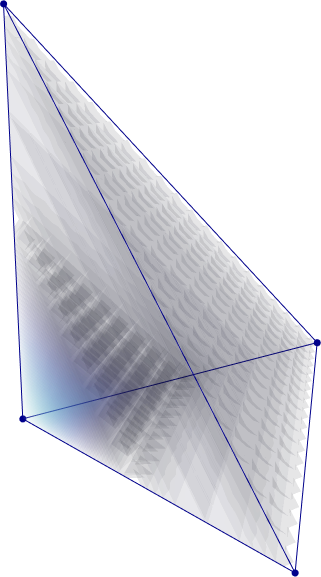}
	\end{subfigure}
	\begin{subfigure}{0.3\linewidth}
		\centering
		\includegraphics[width=0.5\linewidth]{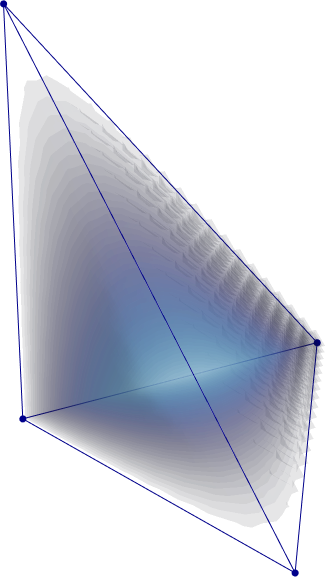}
	\end{subfigure}
	\caption{Vertex and edge nodes on the reference element,  vertex base function $n_1$ and edge base function $n_7$ of the Lagrangian basis.}
	\label{fig:lag}
\end{figure}
Gradients with respect to the physical space are given by the standard chain rule
\begin{align}
	\nabla_x n_i = \bm{J}^{-T} \nabla_\xi n_i \, .
\end{align}
The interpolation of the displacement field is generated using the $\bm{N}$-matrix and the union operator
\begin{align}
	\vb{u} &= \bigcup_{e=1}^n \bm{N} \vb{u}_e \, , 
	\\ \bm{N}_1 &= \begin{bmatrix}
		b_1 \one & b_2 \one & b_3 \one & b_4 \one 
	\end{bmatrix} \in \mathbb{R}^{3 \times 12} \, ,
	\\[1ex] \bm{N}_2 &= \begin{bmatrix}
		n_1 \one & n_2 \one & n_3 \one & n_4 \one & n_5 \one & n_6 \one & n_7 \one & n_8 \one & n_9 \one & n_{10} \one 
	\end{bmatrix} \in \mathbb{R}^{3 \times 30} \, ,
\end{align}
where $\one$ is the three-dimensional identity matrix.
Consequently, in each element, the displacement vector is interpolated by $12$ Lagrangian base functions in the linear case and $30$ in the quadratic case.

\subsection{N\'ed\'elec base}
For the interpolation of the microdistortion tensor $\Pm$ we make use of N\'ed\'elec base functions of the first and second types \cite{Nedelec1980,Ned2,Zaglmayr2006}. 
The edge degrees of freedom are defined by the functionals 
\begin{align}
	l_{ij} (\vb{p}) = \int_{s_{i}} q_j \langle \vb{p} , \, \bm{\tau} \rangle \, \dd s \qquad \forall \, q_j \in \left \{ \begin{matrix}
	    \Po^{p-1}(s_i) & \text{for} & \Ned_I  \\
	    \Po^p(s_i) & \text{for} & \Ned_{II}
	\end{matrix} \right . \, ,
	\label{eq:neddof}
\end{align}
where $s_i$ is the curve of an edge on the element and $q_j$ is a test function. In accordance with the de Rham complex, the appropriate polynomial space for the linear sequence is
\begin{align}
	&\mathit{R}^1 = [\Po^{0}]^3 \oplus \mathit{S}^1 \, , && \mathit{S}^1 = \left \{ \vb{p} \in \left[\widetilde{\Po}^1 \right ]^3 \; | \; \langle \vb{p} , \, \bm{\xi} \rangle = 0 \right \} \, , && \dim \mathit{R}^1 = 6 \, ,
\end{align} 
where $\widetilde{\Po}$ is the space of homogeneous polynomials. The corresponding test function $q_j = 1$ yields the base functions on the reference element (see \cref{fig:nedI})
\begin{figure}
	\centering
	\begin{subfigure}{0.15\linewidth}
		\includegraphics[width=1.0\linewidth]{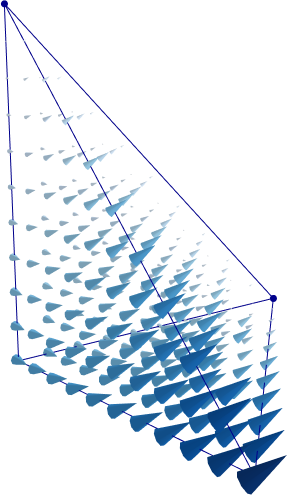}
		\caption{$\bm{\vartheta}_1$} 
	\end{subfigure}
	\begin{subfigure}{0.15\linewidth}
		\includegraphics[width=1.0\linewidth]{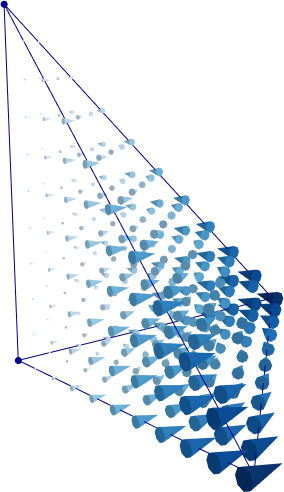}
		\caption{$\bm{\vartheta}_2$}
	\end{subfigure}
	\begin{subfigure}{0.15\linewidth}
		\includegraphics[width=1.0\linewidth]{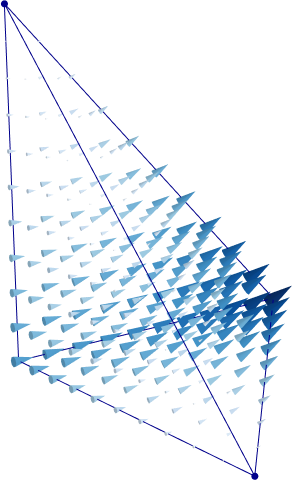}
		\caption{$\bm{\vartheta}_3$}
	\end{subfigure}
	\begin{subfigure}{0.15\linewidth}
		\includegraphics[width=1.0\linewidth]{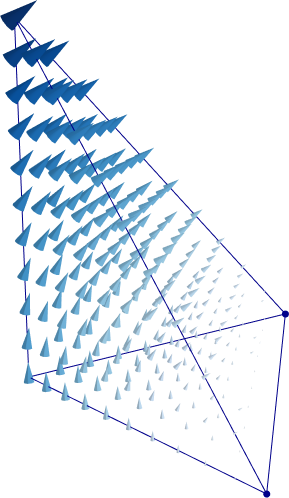}
		\caption{$\bm{\vartheta}_4$}
	\end{subfigure}
	\begin{subfigure}{0.15\linewidth}
		\includegraphics[width=1.0\linewidth]{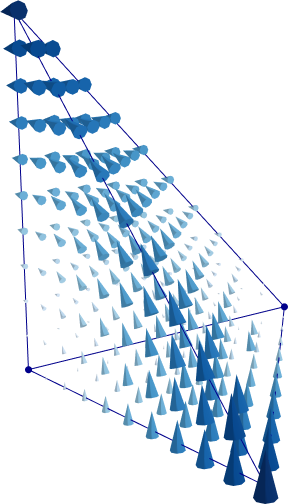}
		\caption{$\bm{\vartheta}_5$}
	\end{subfigure}
    \begin{subfigure}{0.15\linewidth}
    	\includegraphics[width=1.0\linewidth]{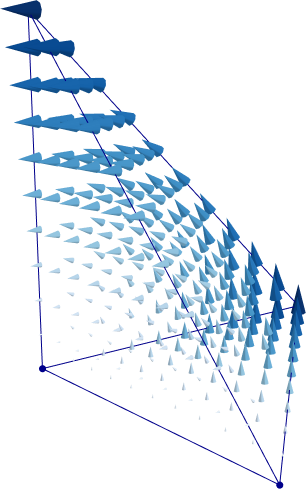}
    	\caption{$\bm{\vartheta}_6$}
    \end{subfigure}
	\caption{Lowest order N\'ed\'edelec base functions of the first type on the reference tetrahedron.}
	\label{fig:nedI}
\end{figure}
\begin{align}
	&\bm{\vartheta}_1 = \begin{bmatrix}
		1 - \eta - \zeta \\
		\xi \\
		\xi
	\end{bmatrix} \, , &&
	\bm{\vartheta}_2 = \begin{bmatrix}
		- \eta \\
		\xi \\
		0
	\end{bmatrix} \, , &&
	\bm{\vartheta}_3 = \begin{bmatrix}
		\eta \\
		1-\xi-\zeta \\
		\eta
	\end{bmatrix} \, , \notag \\
	&\bm{\vartheta}_4 = \begin{bmatrix}
		\zeta \\
		\zeta \\
		1 - \xi - \eta
	\end{bmatrix} \, , &&
	\bm{\vartheta}_5 = \begin{bmatrix}
		- \zeta \\
		0 \\
		\xi
	\end{bmatrix} \, , &&
	\bm{\vartheta}_6 = \begin{bmatrix}
		0 \\
		-\zeta \\
		\eta
	\end{bmatrix} \, ,
\end{align}
representing the base of the lowest order N\'ed\'elec element of the first type $\Ned_I^0$.
For the sequence of the quadratic element we employ the linear N\'ed\'elec elements of the second type $\Ned_{II}^1$. The corresponding polynomial space reads
\begin{align}
	&[\Po^1]^3 = [\spa\{ 1, \,\xi, \,\eta, \,\zeta \}]^3 \, , && \dim [\Po^1]^3 = 12 \, .
\end{align}
In other words, the space is given by $12$ N\'ed\'elec functions.
Using the ansatz
\begin{align}
	\bm{\vartheta} = \begin{bmatrix}
		a_0 + a_1 \xi + a_2 \eta + a_3 \zeta \\
		b_0 + b_1 \xi + b_2 \eta + b_3 \zeta \\
		c_0 + c_1 \xi + c_2 \eta + c_3 \zeta 
	\end{bmatrix} \in [\Po^1]^3 \, ,
\end{align}
and the degrees of freedom from \cref{eq:neddof} for each edge on the reference element
\begin{align}
	l_{ij} (\bm{\vartheta}) &= \int_{\mu_{i}} q_j \langle \bm{\vartheta} , \, \bm{\varsigma} \rangle \, \dd \mu = \delta_{ij} \, , && \, q_1(\mu_i) = \dfrac{d-\mu_i}{d} \, , && q_2(\mu_i) = \mu_i \, , && d = \int_{\mu_{i}} \dd \mu \, ,
\end{align}
where $\mu_i$ is the parameter for the corresponding edge,
we find the base functions (see \cref{fig:nedfns})
\begin{subequations}
\begin{align}
	\bm{\vartheta}_1 &= \left[\begin{matrix}1 - \xi - \eta  - \zeta \\0\\0\end{matrix}\right] \, , & \bm{\vartheta}_2 &= \left[\begin{matrix}\xi\\\xi\\\xi\end{matrix}\right] \, , &
	\bm{\vartheta}_3 &=\left[\begin{matrix}0\\\xi\\0\end{matrix}\right] \, , \\
	\bm{\vartheta}_4 &= \left[\begin{matrix}- \eta\\0\\0\end{matrix}\right] \, , &
	\bm{\vartheta}_5 &=\left[\begin{matrix}0\\1- \xi - \eta  - \zeta\\0\end{matrix}\right] \, , &\bm{\vartheta}_6 &=
	\left[\begin{matrix}\eta\\\eta\\\eta\end{matrix}\right] \, , \\
	\bm{\vartheta}_7  &= \left[\begin{matrix}0\\0\\1- \xi - \eta  - \zeta\end{matrix}\right] \, , &\bm{\vartheta}_8 &=
	\left[\begin{matrix}\zeta\\\zeta\\\zeta\end{matrix}\right] \, , &\bm{\vartheta}_9 &=
	\left[\begin{matrix}0\\0\\\xi\end{matrix}\right] \, , \\
	\bm{\vartheta}_{10} &= \left[\begin{matrix}- \zeta\\0\\0\end{matrix}\right] \, , &\bm{\vartheta}_{11} &=
	\left[\begin{matrix}0\\0\\\eta\end{matrix}\right] \, , &\bm{\vartheta}_{12} &=
	\left[\begin{matrix}0\\- \zeta\\0\end{matrix}\right] \, .	
\end{align}
\end{subequations}
\begin{figure}
	\centering
	\begin{subfigure}{0.15\linewidth}
		\centering
		\includegraphics[width=1.0\linewidth]{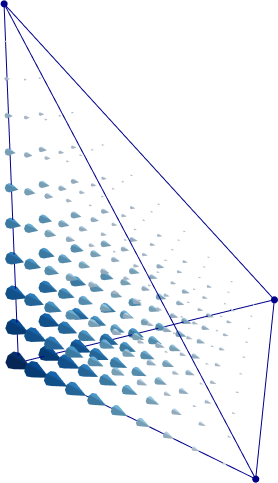}
		\caption{$\bm{\vartheta}_{1}$}
	\end{subfigure}
	\begin{subfigure}{0.15\linewidth}
		\centering
	\includegraphics[width=1.0\linewidth]{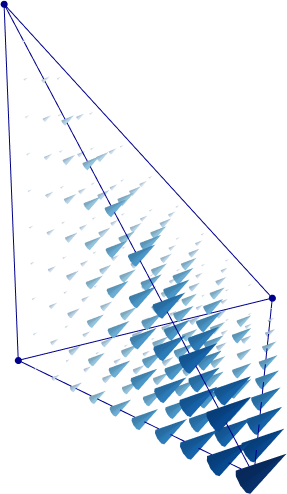}
	\caption{$\bm{\vartheta}_{2}$}
\end{subfigure}
	\begin{subfigure}{0.15\linewidth}
		\centering
	\includegraphics[width=1.0\linewidth]{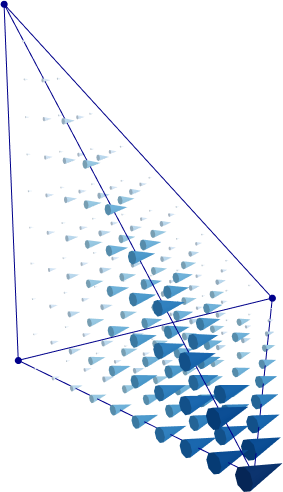}
	\caption{$\bm{\vartheta}_{3}$}
\end{subfigure}
	\begin{subfigure}{0.15\linewidth}
		\centering
	\includegraphics[width=1.0\linewidth]{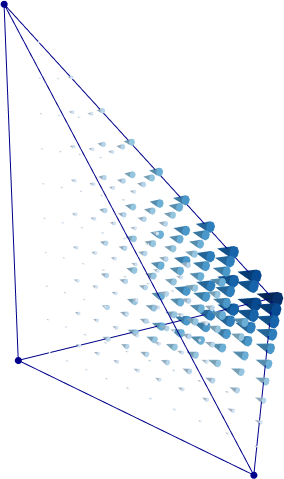}
	\caption{$\bm{\vartheta}_{4}$}
\end{subfigure}
	\begin{subfigure}{0.15\linewidth}
		\centering
	\includegraphics[width=1.0\linewidth]{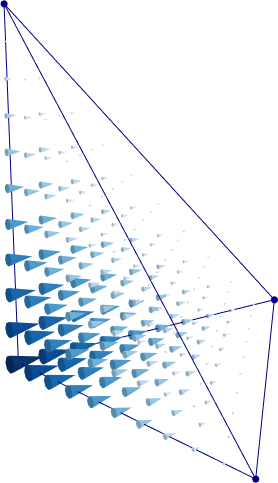}
	\caption{$\bm{\vartheta}_{5}$}
\end{subfigure}
	\begin{subfigure}{0.15\linewidth}
		\centering
	\includegraphics[width=1.0\linewidth]{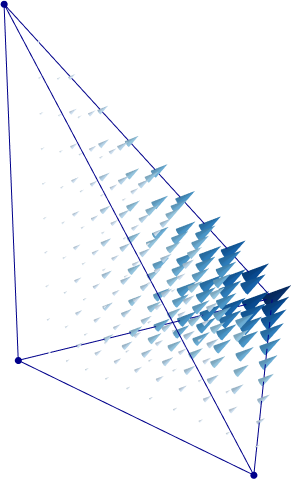}
	\caption{$\bm{\vartheta}_{6}$}
\end{subfigure}
	\begin{subfigure}{0.15\linewidth}
		\centering
	\includegraphics[width=1.0\linewidth]{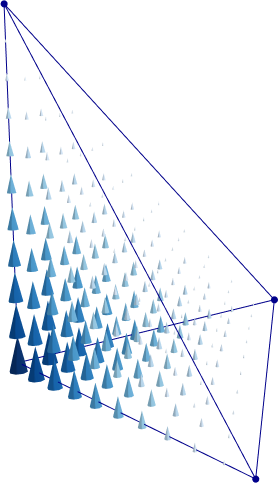}
	\caption{$\bm{\vartheta}_{7}$}
\end{subfigure}
	\begin{subfigure}{0.15\linewidth}
		\centering
	\includegraphics[width=1.0\linewidth]{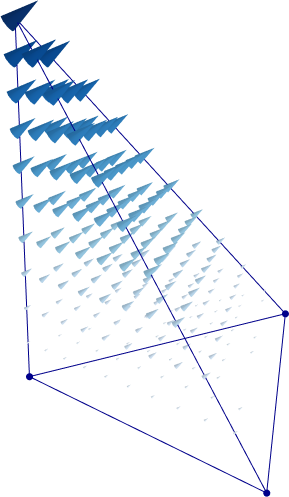}
	\caption{$\bm{\vartheta}_{8}$}
\end{subfigure}
	\begin{subfigure}{0.15\linewidth}
		\centering
	\includegraphics[width=1.0\linewidth]{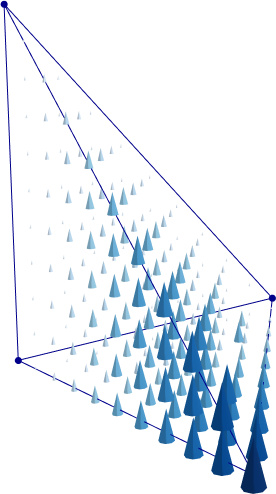}
	\caption{$\bm{\vartheta}_{9}$}
\end{subfigure}
	\begin{subfigure}{0.15\linewidth}
		\centering
	\includegraphics[width=1.0\linewidth]{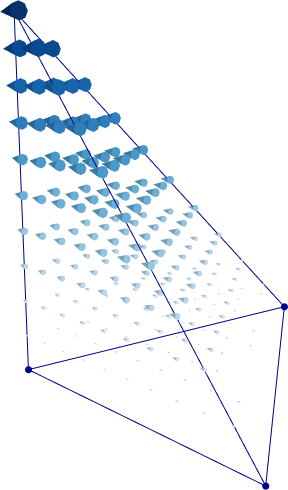}
	\caption{$\bm{\vartheta}_{10}$}
\end{subfigure}
	\begin{subfigure}{0.15\linewidth}
		\centering
	\includegraphics[width=1.0\linewidth]{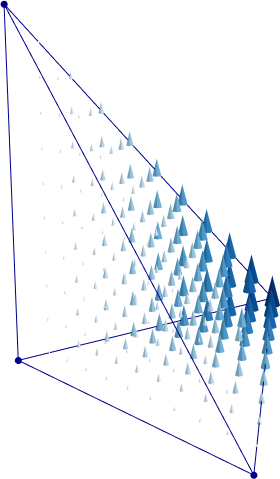}
	\caption{$\bm{\vartheta}_{11}$}
\end{subfigure}
	\begin{subfigure}{0.15\linewidth}
		\centering
	\includegraphics[width=1.0\linewidth]{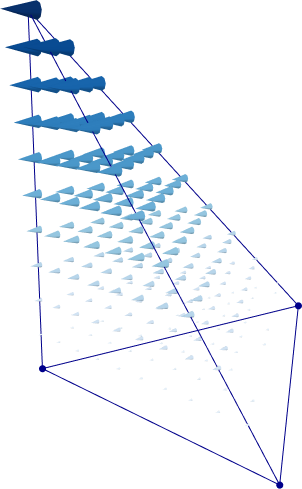}
	\caption{$\bm{\vartheta}_{12}$}
\end{subfigure}
	\caption{N\'ed\'elec $\Ned_{II}^1$-base functions on the reference element.}
	\label{fig:nedfns}
\end{figure}
\begin{remark}
    The resulting N\'ed\'elec base functions are not unique in the sense that they are determined by the choice of the test functions $q_j$. In our case we chose the linear Lagrangian base on $\mu$, leading to a N\'ed\'elec-Lagrangian base.
\end{remark}
\begin{remark}
    Note that the decrease in polynomial order of the second discrete sequence (see \cref{fig:rham}) can be alleviated by employing first order N\'ed\'elec elements of the first type $\Ned_{I}^1$. This increases the interpolation power of the solenoidal part of the microdistortion field $\Pm$, but not of the irrotational part. 
\end{remark}
The base functions are defined on the reference element. In order to preserve their tangential projection on edges in the physical domain
\begin{align}
	&\int_{s_{i}} q_j \langle \bm{\theta} , \, \bm{\tau} \rangle \, \dd s = \int_{\mu_{i}} \hat{q}_j \langle \bm{\vartheta} , \, \bm{\varsigma} \rangle \, \dd \mu \, , && \hat{q}_j = q_j \circ \vb{x} \, ,  
\end{align} 
we make use of the transformation of curves via the Jacobi matrix (see \cref{eq:jacobi})
\begin{align}
	\dd \vb{s} = \bm{\tau} \, \dd s = \bm{J} \, \dd \bm{\mu} = \bm{J} \, \bm{\varsigma} \, \dd \mu \, ,
	\label{eq:tang}
\end{align}
to find the covariant Piola transformation \cite{Mon03}
\begin{align}
	\langle \bm{\theta} , \, \dd \vb{s} \rangle  = \langle \bm{\theta} , \, \bm{J} \, \dd \bm{\mu} \rangle =  \langle \bm{\vartheta} , \, \dd \bm{\mu} \rangle \quad  \Longleftrightarrow \quad \bm{\theta} = \bm{J}^{-T} \bm{\vartheta} \, .
	\label{eq:copiola}
\end{align}
Further, for the transformation of the curl we find
\begin{align}
	\curl_x \bm{\theta} = \nabla_x \times \bm{\theta} = (\bm{J}^{-T} \nabla_\xi) \times (\bm{J}^{-T} \bm{\vartheta}) = \cof(\bm{J}^{-T})\curl_\xi \bm{\vartheta} = \dfrac{1}{\det\bm{J}} \bm{J} \, \curl_\xi \bm{\vartheta} \, ,
	\label{eq:conta}
\end{align}
which is the so called contravariant Piola transformation. The formula is derived using the identity\footnote{$\nabla_x \times \bm{J}^{-T} = (\mathrm{Anti}\nabla_x) (\D_x \bm{\xi})^T = 0$, where $\mathrm{Anti}(\cdot)$ maps the vector to its corresponding anti-symmetric matrix.} ${\nabla_x \times \bm{J}^{-T} = 0}$. 

The microdistortion can now be interpolated by
\begin{align}
    \Pm &= \bigcup_{e=1}^n \bm{\Theta} \Pm_e \, , \\ 
    \bm{\Theta}_I &= \begin{bmatrix}
        \bm{\theta}_1 & \vb{o} & \vb{o} & \bm{\theta}_2 & \vb{o} & \vb{o} & & \bm{\theta}_{6} & \vb{o} & \vb{o} \\
         \vb{o} & \bm{\theta}_1 & \vb{o} & \vb{o} & \bm{\theta}_2 & \vb{o} & \cdots & \vb{o} & \bm{\theta}_{6} & \vb{o}  \\
        \vb{o} & \vb{o} & \bm{\theta}_1 & \vb{o} & \vb{o} & \bm{\theta}_2 & & \vb{o} & \vb{o} & \bm{\theta}_{6} 
    \end{bmatrix} \in \mathbb{R}^{9 \times 18} \, , \\
    \bm{\Theta}_{II} &= \begin{bmatrix}
        \bm{\theta}_1 & \vb{o} & \vb{o} & \bm{\theta}_2 & \vb{o} & \vb{o} & & \bm{\theta}_{12} & \vb{o} & \vb{o} \\
         \vb{o} & \bm{\theta}_1 & \vb{o} & \vb{o} & \bm{\theta}_2 & \vb{o} & \cdots & \vb{o} & \bm{\theta}_{12} & \vb{o}  \\
        \vb{o} & \vb{o} & \bm{\theta}_1 & \vb{o} & \vb{o} & \bm{\theta}_2 & & \vb{o} & \vb{o} & \bm{\theta}_{12} 
    \end{bmatrix} \in \mathbb{R}^{9 \times 36} \, .
\end{align}
In other words, the microdistortion is interpolated by $18$ N\'ed\'elec base functions in the linear case and $36$ in the quadratic case on each element.

\subsection{Raviart-Thomas base}\label{ch:5}
For the mixed formulation we also require Raviart-Thomas \cite{Zaglmayr2006,Raviart} elements.
The lowest order face degrees of freedom of Raviart-Thomas elements are defined as
\begin{align}
	l_i(\vb{p}) = \int_{A_i} \langle \vb{p} , \, \bm{\nu} \rangle \, \dd \Gamma \, .
	\label{eq:dofrt}
\end{align}
The polynomial space for the construction of base functions is given by
\begin{align}
	\RT^p = [\Po^p]^3 \oplus \bm{\xi} \, \widetilde{\Po}^p \, ,
\end{align}
where $\widetilde{\Po}$ is the space of homogeneous polynomials. In the lowest order one finds
\begin{align}
	&\RT^0 = \mathbb{R}^3 \oplus \bm{\xi} \, \mathbb{R} = \spa \left\{ \begin{bmatrix}
		1 \\ 0 \\ 0
	\end{bmatrix} , \,
    \begin{bmatrix}
    	0 \\ 1 \\ 0
    \end{bmatrix} , \, 
    \begin{bmatrix}
    	0 \\ 0 \\ 1
    \end{bmatrix} , \,
    \begin{bmatrix}
    	\xi \\ \eta \\ \zeta
    \end{bmatrix} \right\} \, , && \dim \RT^0 = 4 \, .
\end{align}
Using the ansatz 
\begin{align}
	\bm{\phi} = \begin{bmatrix}
		a_0 + a_1 \, \xi \\
		b_0 + b_1 \, \eta \\
		c_0 + c_1 \, \zeta \\
	\end{bmatrix} \in \RT^0 \, ,
\end{align}
and the degrees of freedom from \cref{eq:dofrt} for each face on the reference tetrahedron
\begin{align}
	l_i(\bm{\phi}_j) = \int_{\Gamma_i} \langle \bm{\phi}_j , \, \bm{\varrho} \rangle \, \dd \Lambda = \delta_{ij} \, ,
\end{align}
where $\Lambda$ denotes a face on the reference tetrahedron, we find the base functions (see \cref{fig:rtfns})
\begin{align}
	&\bm{\phi}_1 = \begin{bmatrix}
		-\xi \\ -\eta \\ 1 - \zeta
	\end{bmatrix} \, , &&
    \bm{\phi}_2 = \begin{bmatrix}
    	\xi \\ \eta - 1 \\ \zeta 
    \end{bmatrix} \, , &&
    \bm{\phi}_3 = \begin{bmatrix}
    	\xi \\ \eta \\ \zeta 
    \end{bmatrix} \, , &&
    \bm{\phi}_4 = \begin{bmatrix}
    	1 - \xi \\ -\eta \\ -\zeta 
    \end{bmatrix} \, . 
\end{align}
\begin{figure}
	\centering
	\begin{subfigure}{0.24\linewidth}
		\centering
		\includegraphics[width=0.65\linewidth]{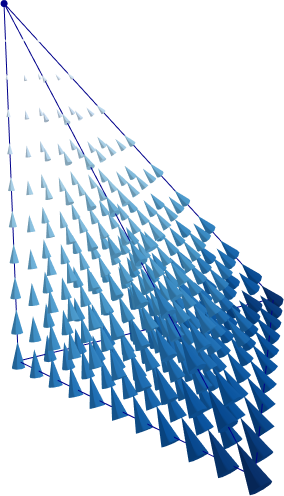}
		\caption{$\bm{\phi}_{1}$}
	\end{subfigure}
	\begin{subfigure}{0.24\linewidth}
		\centering
		\includegraphics[width=0.65\linewidth]{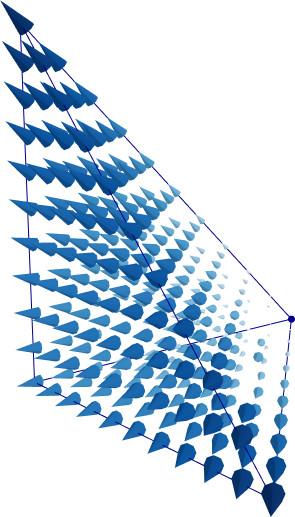}
		\caption{$\bm{\phi}_{2}$}
	\end{subfigure}
	\begin{subfigure}{0.24\linewidth}
		\centering
		\includegraphics[width=0.65\linewidth]{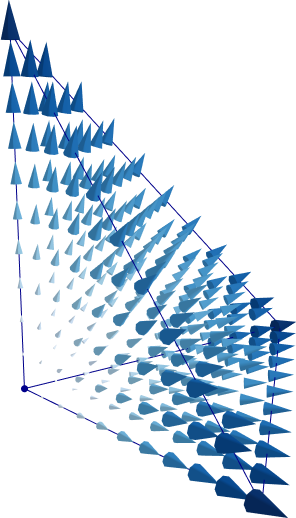}
		\caption{$\bm{\phi}_{3}$}
	\end{subfigure}
	\begin{subfigure}{0.24\linewidth}
		\centering
		\includegraphics[width=0.65\linewidth]{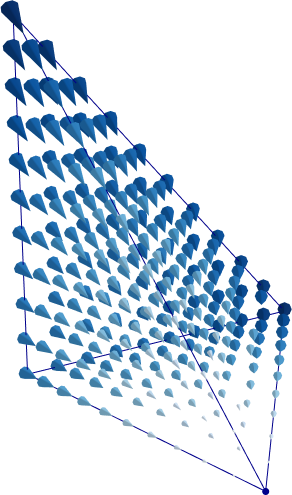}
		\caption{$\bm{\phi}_{4}$}
	\end{subfigure}
	\caption{Raviart-Thomas $\RT^0$-base functions on the reference element.}
	\label{fig:rtfns}
\end{figure}
Raviart-Thomas base functions are defined using the normal projections on the element's faces. In order to preserve the normal projection
\begin{align}
	\int_{A_i} \langle \bm{\varphi}_j , \, \bm{\nu} \rangle \, \dd \Gamma  = \int_{\Gamma_i} \langle \bm{\phi}_j , \, \bm{\varrho} \rangle \, \dd \Lambda \, ,
\end{align}
we consider the transformation of surfaces
\begin{align}
	\dd \bm{\Gamma} = \bm{\nu} \, \dd \Gamma = (\cof\bm{J}) \bm{\varrho} \, \dd \Lambda = (\cof\bm{J}) \dd \bm{\Lambda} \, .
\end{align}
Consequently, the base functions are mapped using the contravariant Piola transformation
\begin{align}
	\langle \bm{\varphi}_j , \, \dd \bm{\Gamma} \rangle = \langle \bm{\varphi}_j , \, (\cof\bm{J}) \, \dd \bm{\Lambda}\rangle = \langle \bm{\phi}_j , \, \dd \bm{\Lambda} \rangle
	\quad  \Longleftrightarrow \quad \bm{\varphi}_j  = \dfrac{1}{\det \bm{J}} \bm{J} \, \bm{\phi}_j \, 
\end{align}
\begin{remark}
    Note that this the same transformation as in \cref{eq:conta} since $\curl \bm{\theta} \in \Hd{} \, , \; \bm{\theta} \in \Hc{}$.
\end{remark}
Considering the divergence of functions undergoing a contravariant Piola transformation we observe
\begin{align}
	\int_\Omega q \, \di_x\bm{\varphi} \, \dd  X &= 
	\oint_{\partial \Omega} q \, \langle \bm{\varphi} , \, \bm{\nu} \rangle \, \dd \Gamma - \int_\Omega \langle \D_x q , \, \bm{\varphi} \rangle \, \dd X \notag \\
	&= \oint_{\partial \Xi} \hat{q} \, \langle \dfrac{1}{\det \bm{J}} \bm{J} \, \bm{\phi} , \, \det(\bm{J}) \, \bm{J}^{-T} \bm{\varrho} \rangle \, \dd \Lambda - \int_\Xi \langle \bm{J}^{-T} \D_\xi \hat{q} , \, \dfrac{1}{\det \bm{J}} \bm{J} \, \bm{\phi} \rangle \, \det \bm{J} \, \dd \Xi \notag \\
	&= \oint_{\partial \Xi} \hat{q} \, \langle \bm{\phi} , \, \bm{\varrho} \rangle \, \dd \Lambda - \int_\Xi \langle \D_\xi \hat{q} , \, \bm{\phi} \rangle \, \dd \Xi \notag \\
	&= \int_\Xi \hat{q} \, \di_\xi \bm{\phi} \, \dd \Xi = \int_\Omega q \, \di_\xi(\bm{\phi}) \, \dfrac{1}{\det \bm{J}} \dd X \qquad \forall\, q \, \in \mathit{C}^\infty(\Omega) \, ,
\end{align}
where $\hat{q} = q \circ \vb{x}$.
As a result, one finds
\begin{align}
	\di_x\bm{\varphi}= 
	\dfrac{1}{\det \bm{J}}\, \di_\xi \bm{\phi} \, .
\end{align}
Thus, the hyperstress field is interpolated by
\begin{align}
    &\bm{D} = \bigcup_{e=1}^n \bm{\Phi} \bm{D}_e \, , && \bm{\Phi} = \begin{bmatrix}
        \bm{\varphi}_1 & \vb{o} & \vb{o} & \bm{\varphi}_2 & \vb{o} & \vb{o} & & \bm{\varphi}_{4} & \vb{o} & \vb{o} \\
         \vb{o} & \bm{\varphi}_1 & \vb{o} & \vb{o} & \bm{\varphi}_2 & \vb{o} & \cdots & \vb{o} & \bm{\varphi}_{4} & \vb{o}  \\
        \vb{o} & \vb{o} & \bm{\varphi}_1 & \vb{o} & \vb{o} & \bm{\varphi}_2 & & \vb{o} & \vb{o} & \bm{\varphi}_{4} 
    \end{bmatrix} \in \mathbb{R}^{9 \times 12} \, ,
\end{align}
such that, on each element $12$ Raviart-Thomas base functions define the hyperstress.  
\begin{remark}
    An increase of the N\'ed\'elec element in the second discrete sequence to $\Ned_I^1$ would allow to employ either first order Raviart-Thomas elements $\RT^1$ or linear Brezzi-Douglas-Marini elements $\BDM^1$ \cite{BDM85}. 
\end{remark}

\subsection{Discontinuous basis}\label{ch:dis}
Finally, for the discontinuous elements in $\Le$ we employ piece-wise constants 
\begin{align}
    &q_i = 1 \, , && q_i \in \Po^0 \, .
\end{align}
The entire space has the dimension $\dim [\Po^0]^3 = 3$ and the interpolation reads
\begin{align}
    &\vb{q} = \bigcup_{e = 1}^n \one \vb{q}_e \, ,
\end{align}
resulting in three discontinuous base functions on each element.

\subsection{The orientation problem}
The co- and contravariant Piola transformations do not suffice to assert the consistent orientation of the tangential or normal projections of the N\'ed\'elec and Raviart-Thomas base functions, respectively. The transformations control the size of the projections, but not whether these are parallel or anti-parallel with respect to neighbouring elements. 
Consistent projections is a key requirement in ensuring no jumps occur in the trace of the respective space and as such, there exist various methods for dealing with this so called orientation problem \cite{Zaglmayr2006,Ainsworth2003,Anjam2015,Sky2021}. In this work we present a solution based on the sequencing of vertices and the separation of orientational data.
We define the following rule for the orientation of edges
\begin{align}
	e = \{v_i, \, v_j \} \qquad \text{s.t.} \quad i < j \, .
\end{align}
This means each edge starts at the lower vertex index and ends at the higher vertex index. 
This definition determines the orientation of the edge tangent vector (see \cref{fig:map}) and consequently, the tangential projection of the N\'ed\'elec base functions. 
Analogously, for surfaces we define
\begin{align}
	f = \{v_i, \, v_j , \, v_k\}  \qquad \text{s.t.} \quad i < j < k  \, ,
\end{align}
such that each surface is given by a sequence of increasing vertex indices. The orientation of the surface normal is given according to the left-hand rule. In other words, the direction of the normal is determined by the cross product of the vectors arising from the edges $\{v_i , \, v_j\}$ and $\{v_i, \, v_k\}$
\begin{align}
	\bm{\nu}_{ijk} \parallel \bm{\tau}_{ij} \times \bm{\tau}_{ik} \, . 
\end{align}
The orientation of the N\'ed\'elec- and Raviart-Thomas base functions is according to these rules. Consequently, in order to map each tetrahedron in the mesh to this orientation, we define each element as an increasing vertex-index sequence (see \cref{fig:consist})
\begin{align}
	T = \{ v_i , \, v_j , \, v_k , \, v_l \} \qquad \text{s.t.} \quad i < j < k < l \, .
\end{align}
The latter ensures the consistent orientation of the base functions, since they are all mapped from the same reference domain. However, integration in the reference element is determined by the determinant of the Jacobi matrix
\begin{align}
	\int_{\Omega_e} \dd X = \int_{\Xi} \det \bm{J} \, \dd \Xi \, ,
\end{align}
which may be negative due to a reflection of the element in the mapping from the reference to the physical domain. We correct for the error by taking only the absolute value of the determinant
\begin{align}
	\int_{\Omega_e} \dd X = \int_{\Xi} |\det \bm{J}| \, \dd \Xi \, .
\end{align}
Consequently, consistency is guaranteed by mapping from a single reference element and the use of correction functions or considerations of neighbouring elements are circumvented. 
\begin{remark}
	The absolute value of $\det \bm{J}$ is only used for the integration over the element. In all other use-cases, the information of the sign is necessary.
\end{remark}
   
\begin{figure}
	\centering
	\definecolor{asl}{rgb}{0.4980392156862745,0.,1.}
	\definecolor{asb}{rgb}{0.,0.4,0.6}
	\begin{subfigure}{0.45\linewidth}
		\centering
		\begin{tikzpicture}
			\begin{axis}
				[
				width=30cm,height=30cm,
				view={20}{5},
				enlargelimits=true,
				xmin=-1,xmax=2,
				ymin=-1,ymax=2,
				zmin=-1,zmax=2,
				domain=-10:10,
				axis equal,
				hide axis
				]
				\addplot3[color=asb][line width=0.6pt,mark=*]
				coordinates {(0.5,0,0)(0,0.5,0)};
				\addplot3[color=asb][line width=0.6pt, densely dashed,mark=*]
				coordinates {(0,0.5,0)(0.25,0.25,0.5)};
				\addplot3[color=asb][line width=0.6pt,mark=*]
				coordinates {(0.25,0.25,0.5)(0.5,0,0)};
				\addplot3[color=asb][line width=0.6pt,mark=*]
				coordinates {(0.5,0,0)(0,0,0.25)};
				\addplot3[color=asb][line width=0.6pt,mark=*]
				coordinates {(0.25,0.25,0.5)(0,0,0.25)};
				\addplot3[color=asb][line width=0.6pt,mark=*]
				coordinates {(0,0.5,0)(0,0,0.25)};
				\fill[opacity=0.1, asb] (axis cs: 0.5,0,0) -- (axis cs: 0,0.5,0) -- (axis cs: 0,0,0.25) -- (axis cs: 0.25,0.25,0.5) -- cycle;
				\draw[color=asb] (0.5,0,0) node[anchor=north east] {$_{v_{1}}$};
				\draw[color=asb] (0,0.5,0) node[anchor=north east] {$_{v_{3}}$};
				\draw[color=asb] (0,0,0.25) node[anchor=north east] {$_{v_{4}}$};
				\draw[color=asb] (0.25,0.25,0.5) node[anchor=south east] {$_{v_{2}}$};
				\draw[->, line width=1.pt, color=asl](0.35,0.15,0)--(0.15,0.35,0);
				\draw[->, line width=1.pt, color=asl, dashed](0.175,0.325,0.35)--(0.075,0.425,0.15);
				\draw[->, line width=1.pt, color=asl](0.425,0.075,0.15)--(0.325,0.175,0.35);
				\draw[->, line width=1.pt, color=asl, dashed](0.25,0.25,1/6)--(0.15,7/30-0.1,1/6);
				
				\addplot3[color=asb][line width=0.6pt,mark=*]
				coordinates {(0.8,0.1,0)(0.3,0.6,0)(0.55,0.35,0.5)(0.8,0.1,0)};	
				\addplot3[color=asb][line width=0.6pt,mark=*]
				coordinates {(0.8,0.1,0)(1.0,0.3,0.25)};
				\addplot3[color=asb][line width=0.6pt,densely dashed,mark=*]
				coordinates {(0.3,0.6,0)(1.0,0.3,0.25)};
				\addplot3[color=asb][line width=0.6pt,mark=*]
				coordinates {(0.55,0.35,0.5)(1.0,0.3,0.25)};
				\addplot3[color=black][line width=0.6pt,dotted]
				coordinates {(0.5,0,0)(0.8,0.1,0)};
				\addplot3[color=black][line width=0.6pt,dotted]
				coordinates {(0,0.5,0)(0.3,0.6,0)};
				\addplot3[color=black][line width=0.6pt,dotted]
				coordinates {(0.25,0.25,0.5)(0.55,0.35,0.5)};
				\fill[opacity=0.1, asb] (axis cs: 0.3,0.6,0) -- (axis cs: 0.8,0.1,0) -- (axis cs: 1.0,0.3,0.25) -- (axis cs: 0.55,0.35,0.5) -- cycle;
				\draw[color=asb] (0.29,0.6,0.02) node[anchor=south] {$_{v_{3}}$};
				\draw[color=asb] (0.8,0.1,0) node[anchor=north east] {$_{v_{1}}$};
				\draw[color=asb] (1.0,0.3,0.25) node[anchor=north west] {$_{v_{5}}$};
				\draw[color=asb] (0.55,0.35,0.5) node[anchor=south east] {$_{v_{2}}$};
				\draw[->, line width=1.pt, color=asl](0.65,0.25,0)--(0.45,0.45,0);
				\draw[->, line width=1.pt, color=asl, ](0.475,0.425,0.35)--(0.375,0.525,0.15);
				\draw[->, line width=1.pt, color=asl](0.725,0.175,0.15)--(0.625,0.275,0.35);
				\draw[->, line width=1.pt, color=asl](0.55,0.35,1/6)--(0.45,7/30,1/6);
			\end{axis}
		\end{tikzpicture}
	\end{subfigure}
    \begin{subfigure}{0.45\linewidth}
    	\centering
    	\textcolor{asb}{\begin{align}
    			&T_1 = \{v_1, \,v_2, \,v_3, \,v_4\} \notag \\[2ex]
    			&T_2 = \{v_1, \,v_2, \,v_3, \,v_5\} \notag 
    	\end{align}}
    \end{subfigure}
	\caption{Consistent orientations using vertex sequences. Edges are oriented from the lower to the higher vertex and faces according to the left-hand rule starting from the lowest vertex across the middle to the highest.}
	\label{fig:consist}
\end{figure}
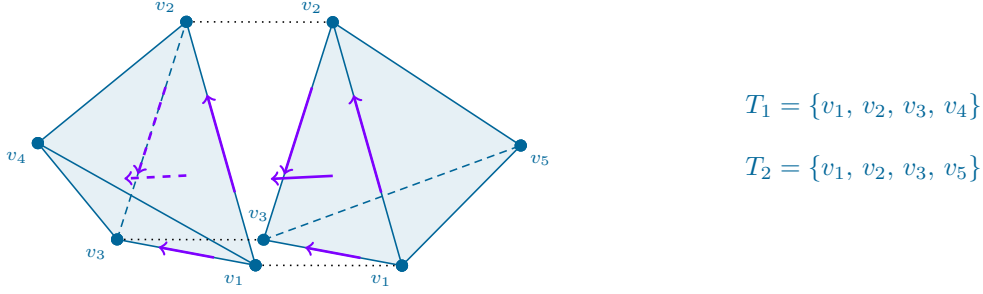

\subsection{The discrete consistent coupling condition}\label{ch:consist}
In order to exactly satisfy the consistent coupling, one may use the degrees of freedom from \cite{Demkowicz2000} to set the boundary conditions, ensuring a commuting projection. However, this requires the solution of a Dirichlet type problem for every edge on $\Gamma_D$. Alternatively, one can first find the Lagrangian interpolation of the boundary condition of the displacement on each edge and derive the boundary condition for the microdistortion directly from it.
We observe that on each element's edge, the displacement is given by
\begin{align}
    \Pi_g^1 u(\mu)\at_s &= \widetilde{u} \at_{v_1} n_1(\mu) + \widetilde{u} \at_{v_2} n_2(\mu) = \widetilde{u} \at_{v_1}(1-\mu) + \widetilde{u} \at_{v_2}\mu  \, , \\[1ex]
    \Pi_g^2 u(\mu)\at_s &= \widetilde{u} \at_{v_1} n_1(\mu) + \widetilde{u} \at_{m}  n_2(\mu) + \widetilde{u} \at_{v_2} n_3(\mu) = \widetilde{u} \at_{v_1}(2 \mu -1)(\mu-1) + \widetilde{u} \at_{m} 4 \mu \, (1 - \mu) + \widetilde{u} \at_{v_2}\mu \, (2  \mu - 1) \, , \notag
\end{align}
where $n_i$ are the Lagrangian base functions on the reference edge and $m$ is the mid-point of the edge.
The consistent coupling condition on a curve transforms according to the covariant Piola transformation
\begin{align}
    \langle \D_x \Pi_g u , \, \bm{\tau} \rangle = \langle \bm{J}^{-T} \D_\xi \Pi_g u , \, \bm{J} \, \bm{\varsigma} \rangle = \langle \D_\xi \Pi_g  u , \, \bm{\varsigma} \rangle = 
    \langle \Pi_c \vb{p} , \, \bm{\varsigma} \rangle \, ,
\end{align}
where $\vb{p}$ represents a row of the microdistortion $\Pm$ and $\langle \Pi_c \vb{p} , \, \bm{\varsigma} \rangle$ results from the transformations of the tangent vector \cref{eq:tang} and the N\'ed\'elec base functions \cref{eq:copiola}.
The gradient of the Lagrangian interpolation of the displacement $\Pi_g u$ can be reformulated in dependence of the curve parameter $\mu \in [0, \, 1]$
\begin{align}
    \langle \D_\xi \Pi_g u ,\, \bm{\varsigma} \rangle = (\Pi_g u)_{,\mu} \, \langle \D_\xi \mu ,\, \bm{\varsigma} \rangle \, . 
\end{align}
We note that in order to maintain an invariant transformation, the tangent vectors on the reference domain are given by
\begin{align}
    \bm{\varsigma} = \bm{\xi}_{,\mu} = \dfrac{\dd}{\dd \mu} [(1 - \mu) \, \bm{\xi}_{v_1} + \mu \, \bm{\xi}_{v_2}] = \bm{\xi}_{v_2} - \bm{\xi}_{v_1} \, ,
\end{align}
and are not necessarily unit vectors, consider edges $\{v_2, \, v_3\}, \, \{v_2, \, v_4\} , \, \{v_3, \, v_4\}$ in \cref{fig:map}.
Consequently, one finds
\begin{align}
    (\Pi_g u)_{,\mu} \, \langle \D_\xi \mu ,\, \bm{\varsigma} \rangle = (\Pi_g u)_{,\mu} \, \langle \D_\xi \mu ,\, \bm{\xi}_{, \mu} \rangle = (\Pi_g u)_{, \mu} \, .
\end{align}
As a result, we can simplify the consistent coupling condition to
\begin{align}
    \langle \Pi_c^{I,0} \vb{p} , \, \bm{\varsigma} \rangle &= (\Pi_g^1 u)_{, \mu} = \widetilde{u} \at_{v_2}  - \widetilde{u} \at_{v_1} \, , \\[1ex]
    \langle \Pi_c^{II,1} \vb{p} , \, \bm{\varsigma} \rangle &= (\Pi_g^2 u)_{, \mu} = \widetilde{u} \at_{v_1}  (4\mu - 3) + \widetilde{u} \at_{m} (4 - 8 \mu)  + \widetilde{u} \at_{v_2}(4 \mu - 1) \, .
\end{align}
For the linear element, each edge has one N\'ed\'elec base function of the first type with a constant tangential projection
\begin{align}
    \langle \bm{\vartheta}_{i} , \, \bm{\varsigma} \rangle \at_{\mu_j} = \delta_{ij} \, , 
\end{align}
allowing to easily embed the consistent coupling condition
\begin{align}
    &p_i: \, &&  \langle \Pi_c^{I,0} \vb{p} , \, \bm{\varsigma} \rangle \at_{\mu_i} = p_i = (\Pi^1_g u)_{,\mu} \at_{\mu_i} = \widetilde{u} \at_{v_2}  - \widetilde{u} \at_{v_1} \, .
\end{align}
Consequently, on each boundary edge the interpolation of one row of the microdistortion is given by
\begin{align}
    \Pi_c^{I,0} \vb{p} \at_{s_i} = p_i \, \bm{\theta}_i \, .
\end{align}
In the quadratic sequence, each edge is associated with two N\'ed\'elec base functions of the second type  $\bm{\vartheta}_{v_1}, \, \bm{\vartheta}_{v_2}$. Their projections on the tangent vector at their respective vertices is one and zero on the other vertex 
\begin{align}
    \langle \bm{\vartheta}_{v_i} , \, \bm{\varsigma} \rangle \at_{v_j} = \delta_{ij} \, .  
\end{align}
Consequently, it suffices to control the tangential projection of each base function at its respective vertex in order to impose the boundary condition
\begin{align}
    &p_{v_1} \, : && \langle \Pi_c^{II,1} \vb{p} , \, \bm{\varsigma} \rangle \at_{v_1} = p_{v_1} = (\Pi_g u)_{,\mu} \at_{v_1} = (\Pi_g u)_{,\mu} \at_{\mu = 0} = -3 \, \widetilde{u} \at_{v_1} + 4 \, \widetilde{u} \at_{m}  - \widetilde{u} \at_{v_2} \, , \notag \\[2ex]
    &p_{v_2} \, : && \langle \Pi_c^{II,1} \vb{p} , \, \bm{\varsigma} \rangle \at_{v_2} = p_{v_2} = (\Pi_g u)_{,\mu} \at_{v_2} = (\Pi_g u)_{,\mu} \at_{\mu = 1} = \, \widetilde{u} \at_{v_1} - 4 \, \widetilde{u} \at_{m}  + 3 \, \widetilde{u} \at_{v_2} \, .
\end{align}
The resulting interpolation of one row of the microdistortion at the boundary edge reads
\begin{align}
    \Pi_c^{II,1} \vb{p} \at_{s_i} = p_{v_1} \bm{\theta}_{v_1} + p_{v_2} \bm{\theta}_{v_2} \, .
\end{align}
For both sequences the above demonstrated methodology satisfies the discrete consistent coupling condition exactly by construction.

\subsection{Element stiffness matrices}
The matrices $\bm{N}$, $\bm{\Theta}$, $\bm{\Phi}$, and $\one$ define the interpolation for each element. 
The resulting finite element stiffness matrices for the primal formulation read
\begin{align}
    \bm{K}_{\vb{u}\vb{u}} &= \int_{\Omega_e} (\D\bm{N})^T (\Ce + \Cc) \D \bm{N} \, \dd X \, , &  
    \bm{K}_{\vb{u}\bm{P}} &= \int_{\Omega_e} (\D\bm{N})^T (\Ce + \Cc)  \, \bm{\Theta} \, \dd X \, ,
    \notag\\[2ex]
    \bm{K}_{\bm{P}\vb{u}} &= \int_{\Omega_e} \bm{\Theta}^T (\Ce + \Cc) \D\bm{N} \, \dd X \, , &  
    \bm{K}_{\bm{P}\bm{P}} &= \int_{\Omega_e} \bm{\Theta}^T (\Ce + \Cc + \Cm) \, \bm{\Theta} + \muma \Lc^2 \, (\Curl \bm{\Theta})^T \Curl \bm{\Theta} \, \dd X \, , \notag \\
    \bm{K}_e &= \begin{bmatrix}
        \bm{K}_{\vb{u}\vb{u}} & \bm{K}_{\vb{u}\bm{P}}\\
        \bm{K}_{\bm{P}\vb{u}} & \bm{K}_{\bm{P}\bm{P}}
    \end{bmatrix} \, .
\end{align}
The linear element's stiffness matrix has the dimensions $\bm{K}_e \in \mathbb{R}^{30 \times 30}$ and the matrix of the quadratic element the dimensions $\bm{K}_e \in \mathbb{R}^{66 \times 66}$.
The load vector of the primal formulation is given by
\begin{align}
    \vb{f}_e = \int_{\Omega_e} \begin{bmatrix}
        \bm{N}^T \vb{f} \\
        \bm{\Theta}^T \bm{M}
    \end{bmatrix} \, \dd X \, , 
\end{align}
with the dimensions $\vb{f}_e \in \mathbb{R}^{30}$ for the linear sequence and $\vb{f}_e \in \mathbb{R}^{66}$ for the quadratic sequence, respectively.

The mixed formulation induces the following additional stiffness matrices 
\begin{align}
    \bm{K}_{\bm{P}\bm{P}} &= \int_{\Omega_e}  \bm{\Theta}^T \Cm \, \bm{\Theta} \, \dd X \, , & \bm{K}_{\bm{P}\bm{D}} &= \int_{\Omega_e}  (\Curl\bm{\Theta})^T \bm{\Phi} \, \dd X \, ,
    \notag\\[2ex]
    \bm{K}_{\bm{D}\bm{P}} &= \int_{\Omega_e}   \bm{\Phi}^T \Curl\bm{\Theta} \, \dd X \, , & \bm{K}_{\bm{D}\bm{D}} &=- \dfrac{1}{\muma \Lc^2} \int_{\Omega_e}   \bm{\Phi}^T \bm{\Phi} \, \dd X \, ,
    \notag\\[2ex]
    \bm{K}_{\bm{D}\vb{q}} &= \int_{\Omega_e} (\Di \bm{\Phi})^T \one \, \dd X = \int_{\Omega_e} (\Di \bm{\Phi})^T \, \dd X \, , &
    \bm{K}_{\vb{q}\bm{D}} &= \int_{\Omega_e} \one \Di \bm{\Phi} \, \dd X = \int_{\Omega_e} \Di \bm{\Phi} \, \dd X \, , 
    \notag\\[2ex] 
    \bm{K}_e &= \begin{bmatrix}
        \bm{K}_{\vb{u}\vb{u}} & \bm{K}_{\vb{u}\bm{P}} &
        0 & 0
        \\
        \bm{K}_{\bm{P}\vb{u}} & \bm{K}_{\bm{P}\bm{P}} 
        & \bm{K}_{\bm{P}\bm{D}} & 0 \\
        0 & \bm{K}_{\bm{D}\bm{P}} & \bm{K}_{\bm{D}\bm{D}} & \bm{K}_{\bm{D}\vb{q}} \\
        0 & 0 & \bm{K}_{\vb{q}\bm{D}} & 0
    \end{bmatrix} \, ,
\end{align}
where $\bm{K}_{\vb{u}\vb{u}}$, $\bm{K}_{\vb{u}\bm{P}}$, and $\bm{K}_{\bm{P}\vb{u}}$ remain unchanged with respect to the primal formulation.
The linear sequence yields the matrix dimensions $\bm{K}_e \in \mathbb{R}^{45 \times 45}$, whereas the quadratic sequence results in $\bm{K}_e \in \mathbb{R}^{81 \times 81}$.
The load vector changes its dimensions in the mixed formulation, but not its content
\begin{align}
    \vb{f}_e = \int_{\Omega_e} \begin{bmatrix}
        \bm{N}^T \vb{f} \\
        \bm{\Theta}^T \bm{M} \\
        0
    \end{bmatrix} \, \dd X \, , 
\end{align} 
with dimensions corresponding to the element stiffness matrix. 
\begin{remark}
    Note that the symmetry and anti-symmetry operators have been dropped. This is because the material tensors are defined as $\Ce, \, \Cm: \mathbb{R}^{3 \times 3} \mapsto \Sym(3)$ and $\Cc: \mathbb{R}^{3\times 3} \mapsto \so(3)$, implying the former. For materials where this is not guaranteed, the operators can be recovered by modifying the material tensors 
    \begin{align}
        \Ce \sym \Pm = \Ce \mathbb{S} \Pm = \widetilde{\mathbb{C}}_\mathrm{e} \Pm \, ,  &&  \Cm \sym \Pm = \Cm \mathbb{S} \Pm = \widetilde{\mathbb{C}}_\mathrm{micro} \Pm \, , && \Cc \skw \Pm = \Cc \mathbb{A} \Pm = \widetilde{\mathbb{C}}_\mathrm{c} \Pm \, .
    \end{align}
\end{remark}

\section{Numerical examples}\label{ch:7}
In the following we test the convergence rates of the model. The behaviour with respect to the characteristic length $\Lc$ is crucial, as this parameter governs the impact of the micro-structure on the displacement field $\vb{u}$ and the relation to the Cauchy continuum. 

In order to study convergence we construct artificial analytical solutions for the displacement field $\widetilde{u}$ and the microdistortion field $\widetilde{P}$ (see \cref{ap:A}) and compare them with the numerical approximation of the model.
Errors are measured in the $\Le$-norm
\begin{align}
    &\| \widetilde{\vb{u}} - \vb{u} \|_{\Le} = \sqrt{\sum_{e = 1}^n \int_{\Omega_e} \| \widetilde{\vb{u}} - \vb{u} \|^2 \, \dd X} \, , && \| \widetilde{\bm{P}} - \bm{P} \|_{\Le} = \sqrt{\sum_{e = 1}^n \int_{\Omega_e} \| \widetilde{\bm{P}} - \bm{P} \|^2 \, \dd X} \, .
\end{align}

\subsection{Convergence for $\Lc \to 0$}
In this benchmark the domain is defined as the axis-symmetric cube $\Omega = [-1, \, 1]^3$. For simplicity the material constants in the isotropic case are set to 
\begin{align}
    \lambda_\mathrm{e}, \, \mu_\mathrm{e}, \, \mu_\mathrm{c} , \, \lambda_\mathrm{micro}, \, \mu_\mathrm{micro}, \, \mu_\mathrm{macro} = 1 \, .
\end{align}
The characteristic length is varied between $\Lc \in (0, \, 1]$ in order to test the stability of the formulations.
The prescribed fields are defined as 
\begin{align}
    \widetilde{\vb{u}} = \begin{bmatrix}
        0 \\ 0 \\ (1 - x)^2 \, (1 + x)^2
    \end{bmatrix} \, , && \widetilde{\bm{P}} = (1 - x) \, (1 + x) \begin{bmatrix}
        -y - z & x & x \\
        -y - z & x & x \\
        -y - z & x & x 
    \end{bmatrix} \, ,
\end{align}
being of higher polynomial order than the quadratic element. The entire boundary is set to $\partial \Omega = \Gamma_D$.
By applying the strong form one finds the force 
\begin{align}
    \vb{f} = \left[\begin{matrix}- 6 \, x^{2} + 6 \, x \, y + 6 \, x \, z + 2\\ x^{2} + 4 \, x \, y + 4 \, x \, z - 1\\- 23 \, x^{2} + 4 \, x \, y + 4 \, x \, z + 7\end{matrix}\right] \, , 
\end{align}
and micro-moment
\begin{align}
    \bm{M} =& \left[\begin{matrix}\left(x - 1\right) \left(x + 1\right) \left(- 4 x + 6 y + 6 z\right) & \left(x - 1\right) \left(x + 1\right) \left(- 3 x + y + z\right) & \left(x - 1\right) \left(x + 1\right) \left(- 3 x + 1 y + 1 z\right)\\\left(x - 1\right) \left(x + 1\right) \left(- 1 x + 3 y + 3 z\right) & \left(x - 1\right) \left(x + 1\right) \left(- 8 x + 2 y + 2 z\right) & 4 x \left(1 - x^{2}\right)\\- 9 x^{3} + 3 x^{2} y + 3 x^{2} z + 9 x - 3 y - 3 z & 4 x \left(1 - x^{2}\right) & \left(x - 1\right) \left(x + 1\right) \left(- 8 x + 2 y + 2 z\right)\end{matrix}\right] \notag \\
    &+ \Lc^2 \left[\begin{matrix}0 & 8 x & 8 x\\0 & 8 x & 8 x\\0 & 8 x & 8 x\end{matrix}\right] \, . 
\end{align}
Reducing the characteristic length $\Lc$ or setting it to zero does not disturb the stability of the computation, even though control of the Curl of $\Pm$ is lost, as proven by the convergence rates in \cref{fig:conv_lc0_muc1}. This is a direct result of the derivation in \cref{sec:lczero}, asserting stable computations for $\Pm \in [\Le(\Omega)]^{3 \times 3}$.
\begin{figure}
	\centering
	\begin{subfigure}{0.48\linewidth}
		\begin{tikzpicture}
		\definecolor{npurple}{rgb}{0.4980392156862745,0.,1.}
			\begin{loglogaxis}[
				/pgf/number format/1000 sep={},
				axis lines = left,
				xlabel={Degrees of freedom (Log)},
				ylabel={$\| \widetilde{\vb{u}} - \vb{u}\|_{\Le}$ (Log)},
				xmin=100, xmax=1e5,
				ymin=1e-3, ymax=100,
				xtick={100,1000,10000,1e5},
				ytick={1e-3,1e-2,1e-1,1, 10,100},
				legend pos= north east,
				ymajorgrids=true,
				grid style=dotted,
				]
				\addplot[
				color=npurple,
				mark=triangle,
				]
				coordinates {
					( 165 ,  0.2821303631805278 )
				    ( 2241 ,  0.03460126169470074 )
				    ( 9069 ,  0.010569364916892884 )
				    ( 23385 ,  0.00454690987524941 )
				};
				\addlegendentry{$\Lc = 10^{-6}$}
				\addplot[
				color=violet,
				mark=o,
				]
				coordinates {
					( 165 ,  0.2821303631807766 )
					( 2241 ,  0.034601261694742085 )
					( 9069 ,  0.010569364916887026 )
					( 23385 ,  0.004546909875231032 )
				};
				\addlegendentry{$\Lc = 10^{-7}$}
				\addplot[
				color=teal,
				mark=square,
				]
				coordinates {
				    ( 165 ,  0.28213036318077944 )
				    ( 2241 ,  0.03460126169474249 )
				    ( 9069 ,  0.010569364916886906 )
				    ( 23385 ,  0.004546909875230665 )
				};
				\addlegendentry{$\Lc = 10^{-8}$}
				\addplot[
				color=blue,
				mark=pentagon,
				]
				coordinates {
				    ( 165 ,  0.28213036318077966 )
				    ( 2241 ,  0.034601261694742536 )
				    ( 9069 ,  0.01056936491688737 )
				    ( 23385 ,  0.004546909875230773 )
				};
				\addlegendentry{$\Lc = 10^{-9}$}
				\addplot[dashed,color=black, mark=none]
				coordinates {
					(250, 0.02)
					(10000, 0.0017099759466766972)
				};
			\end{loglogaxis}
			\draw (2.5,1.4) node[anchor=north west]{$\mathcal{O}(h^2)$};
		\end{tikzpicture}
	\end{subfigure}
	\begin{subfigure}{0.48\linewidth}
		\begin{tikzpicture}
		\definecolor{npurple}{rgb}{0.4980392156862745,0.,1.}
			\begin{loglogaxis}[
				/pgf/number format/1000 sep={},
				axis lines = left,
				xlabel={Degrees of freedom (Log)},
				ylabel={$\| \widetilde{\bm{P}} - \bm{P}\|_{\Le}$ (Log)},
				xmin=100, xmax=1e5,
				ymin=1e-3, ymax=100,
				xtick={100,1000,10000,1e5},
				ytick={1e-3,1e-2,1e-1,1, 10,100},
				legend pos= south west,
				ymajorgrids=true,
				grid style=dotted,
				]
				\addplot[
				color=npurple,
				mark=triangle,
				]
				coordinates {
				    ( 165 ,  2.251032611571998 )
				    ( 2241 ,  0.4750945567336475 )
				    ( 9069 ,  0.19229746152151736 )
				    ( 23385 ,  0.10137962849435002 )
				};
				\addlegendentry{$\Lc = 10^{-6}$}
				\addplot[
				color=violet,
				mark=o,
				]
				coordinates {
				    ( 165 ,  2.2510326115732937 )
				    ( 2241 ,  0.47509455673478185 )
				    ( 9069 ,  0.192297461522773 )
				    ( 23385 ,  0.10137962849571058 )
				};
				\addlegendentry{$\Lc = 10^{-7}$}
				\addplot[
				color=teal,
				mark=square,
				]
				coordinates {
				    ( 165 ,  2.2510326115733066 )
				    ( 2241 ,  0.47509455673479356 )
				    ( 9069 ,  0.19229746152278476 )
				    ( 23385 ,  0.10137962849572427 )
				};
				\addlegendentry{$\Lc = 10^{-8}$}
				\addplot[
				color=blue,
				mark=pentagon,
				]
				coordinates {
				    ( 165 ,  2.2510326115733066 )
				    ( 2241 ,  0.4750945567347937 )
				    ( 9069 ,  0.1922974615227841 )
				    ( 23385 ,  0.10137962849572534 )
				};
				\addlegendentry{$\Lc = 10^{-9}$}
				\addplot[dashed,color=black, mark=none]
				coordinates {
					(500, 5)
					(25000, 0.36840314986403866)
				};
			\end{loglogaxis}
			\draw (3.2,4.2) node[anchor=north west]{$\mathcal{O}(h^2)$};
		\end{tikzpicture}
	\end{subfigure}
	\caption{Convergence rates for $\Lc \to 0$ and $\mu_\mathrm{c} = 1$ for the quadratic sequence. The convergence remains stable for $\Lc \to 0$.}
	\label{fig:conv_lc0_muc1}
\end{figure}

In an alternative example we take the Cosserat couple modulus $\mu_\mathrm{c} = 0$ (i.e., $\Cc = 0$), resulting in the force
\begin{align}
    \vb{f} = \left[\begin{matrix}x \left(- 4\, x + 6 \,y + 6 \,z\right)\\2 x \left(- x + y + z\right)\\- 14\, x^{2} + 2 \,x\, y + 2 \,x \,z + 4\end{matrix}\right] \, ,
\end{align}
and the micro-moment
\begin{align}
    \bm{M} =& \left[\begin{matrix}\left(x - 1\right) \left(x + 1\right) \left(- 4 x + 6 y + 6 z\right) & 2 \left(x - 1\right) \left(x + 1\right) \left(- x + y + z\right) & - 6 x^{3} + 2 x^{2} y + 2 x^{2} z + 6 x - 2 y - 2 z\\2 \left(x - 1\right) \left(x + 1\right) \left(- x + y + z\right) & \left(x - 1\right) \left(x + 1\right) \left(- 8 x + 2 y + 2 z\right) & 4 x \left(1 - x^{2}\right)\\- 6 x^{3} + 2 x^{2} y + 2 x^{2} z + 6 x - 2 y - 2 z & 4 x \left(1 - x^{2}\right) & \left(x - 1\right) \left(x + 1\right) \left(- 8 x + 2 y + 2 z\right)\end{matrix}\right] \notag \\
    &+ \Lc^2 \left[\begin{matrix}0 & 8 x & 8 x\\0 & 8 x & 8 x\\0 & 8 x & 8 x\end{matrix}\right] \, .
\end{align}
The primal formulation maintains its stability up to small $\Lc$-values coupled with small elements, see \cref{fig:conv_lc0_muc0}. For $\Lc = 10^{-10}$ the formulation becomes completely unstable and the matrix singular. The deterioration in the convergence is clearly depicted in \cref{fig:lc0}. Since $\muc = 0$ and $\Lc \to 0$, no term controls the skew-symmetric part of the microdistortion and the formulation only converges for the displacement $\vb{u}$, up to $\Lc = 10^{-10}$, where the stiffness matrix becomes singular.

\begin{figure}
	\centering
	\begin{subfigure}{0.48\linewidth}
		\begin{tikzpicture}
		\definecolor{npurple}{rgb}{0.4980392156862745,0.,1.}
			\begin{loglogaxis}[
				/pgf/number format/1000 sep={},
				axis lines = left,
				xlabel={Degrees of freedom (Log)},
				ylabel={$\| \widetilde{\vb{u}} - \vb{u}\|_{\Le}$ (Log)},
				xmin=100, xmax=1e5,
				ymin=1e-4, ymax=100000,
				xtick={100,1000,10000,1e5},
				ytick={1e-4,1e-2,1, 100,10000},
				legend pos= north east,
				ymajorgrids=true,
				grid style=dotted,
				]
				\addplot[
				color=npurple,
				mark=triangle,
				]
				coordinates {
				    ( 165 ,  0.1739641654817233 )
				    ( 2241 ,  0.02516444965396782 )
				    ( 9069 ,  0.007581667692057104 )
				    ( 23385 ,  0.003213616641034855 )
				};
				\addlegendentry{$\Lc = 10^{-6}$}
				\addplot[
				color=violet,
				mark=o,
				]
				coordinates {
				    ( 165 ,  0.17396416548172358 )
				    ( 2241 ,  0.02516444965396797 )
				    ( 9069 ,  0.007581667692057311 )
				    ( 23385 ,  0.0032136166410345886 )
				};
				\addlegendentry{$\Lc = 10^{-7}$}
				\addplot[
				color=teal,
				mark=square,
				]
				coordinates {
				    ( 165 ,  0.17396416548172391 )
				    ( 2241 ,  0.02516444965396804 )
				    ( 9069 ,  0.00758166769205726 )
				    ( 23385 ,  0.003213616641034271 )
				};
				\addlegendentry{$\Lc = 10^{-8}$}
				\addplot[
				color=blue,
				mark=pentagon,
				]
				coordinates {
				    ( 165 ,  0.17396416548172308 )
				    ( 2241 ,  0.025164449653951825 )
				    ( 9069 ,  0.007581667692059966 )
				    ( 23385 ,  0.0032136166410418423 )
				};
				\addlegendentry{$\Lc = 10^{-9}$}
				\addplot[dashed,color=black, mark=none]
				coordinates {
					(250, 0.02)
					(10000, 0.0017099759466766972)
				};
			\end{loglogaxis}
			\draw (2.3,1.1) node[anchor=north west]{$\mathcal{O}(h^2)$};
		\end{tikzpicture}
	\end{subfigure}
	\begin{subfigure}{0.48\linewidth}
		\begin{tikzpicture}
		\definecolor{npurple}{rgb}{0.4980392156862745,0.,1.}
			\begin{loglogaxis}[
				/pgf/number format/1000 sep={},
				axis lines = left,
				xlabel={Degrees of freedom (Log)},
				ylabel={$\| \widetilde{\bm{P}} - \bm{P}\|_{\Le}$ (Log)},
				xmin=100, xmax=1e5,
				ymin=1e-4, ymax=100000,
				xtick={100,1000,10000,1e5},
				ytick={1e-4,1e-2,1, 100,10000},
				legend pos= south west,
				ymajorgrids=true,
				grid style=dotted,
				]
				\addplot[
				color=npurple,
				mark=triangle,
				]
				coordinates {
					( 165 ,  2.723198237156818 )
				    ( 2241 ,  0.5600411872231676 )
				    ( 9069 ,  0.23192083303998345 )
				    ( 23385 ,  0.12464011268338643 )
				};
				\addlegendentry{$\Lc = 10^{-6}$}
				\addplot[
				color=violet,
				mark=o,
				]
				coordinates {
					( 165 ,  2.723369097038974 )
					( 2241 ,  0.5616709846575639 )
					( 9069 ,  0.2511482476071055 )
					( 23385 ,  0.1328120159447346 )
				};
				\addlegendentry{$\Lc = 10^{-7}$}
				\addplot[
				color=teal,
				mark=square,
				]
				coordinates {
				    ( 165 ,  12.348226893262659 )
				    ( 2241 ,  2.4015224041833645 )
				    ( 9069 ,  5.066991735986115 )
				    ( 23385 ,  2063.645285311393 )
				};
				\addlegendentry{$\Lc = 10^{-8}$}
				\addplot[
				color=blue,
				mark=pentagon,
				]
				coordinates {
				    ( 165 ,  12.290839082077994 )
				    ( 2241 ,  11056.047723407932 )
				    ( 9069 ,  3217.171962414453 )
				    ( 23385 ,  23644.50176321061 )
				};
				\addlegendentry{$\Lc = 10^{-9}$}
				\addplot[dashed,color=black, mark=none]
				coordinates {
					(500, 5)
					(25000, 0.36840314986403866)
				};
			\end{loglogaxis}
			\draw (4.4,2.95) node[anchor=north west]{$\mathcal{O}(h^2)$};
		\end{tikzpicture}
	\end{subfigure}
	\caption{Convergence rates for $\Lc \to 0$ and $\mu_\mathrm{c} = 0$ for the quadratic sequence. Convergence is lost for small characteristic length $\Lc$ values.}
	\label{fig:conv_lc0_muc0}
\end{figure}
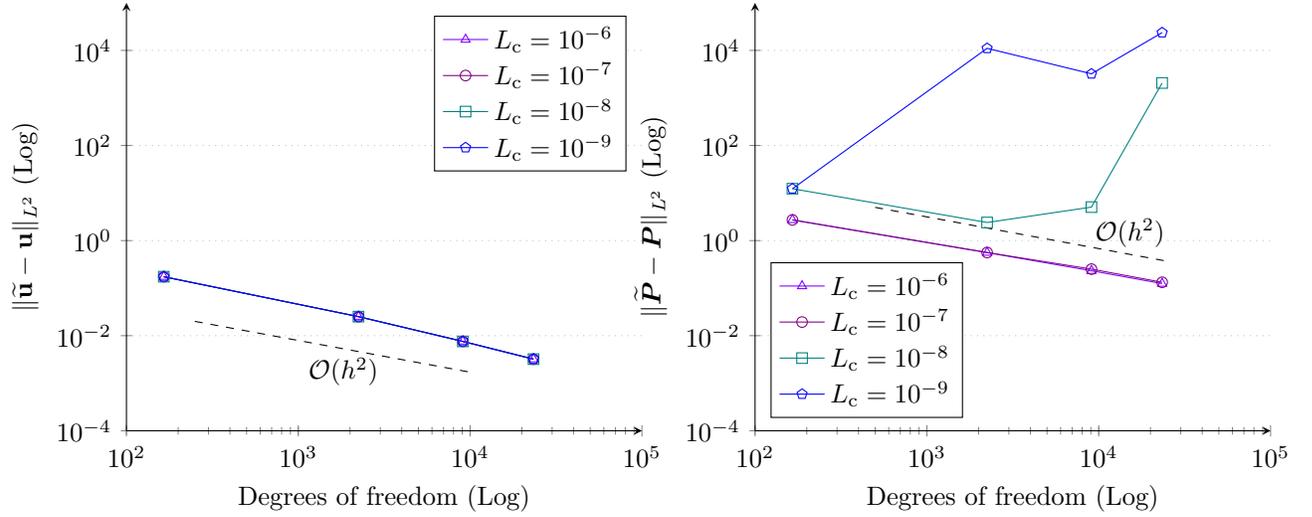
\begin{figure}
	\centering
	\begin{subfigure}{0.24\linewidth}
	\centering
		\includegraphics[width=1.0\linewidth]{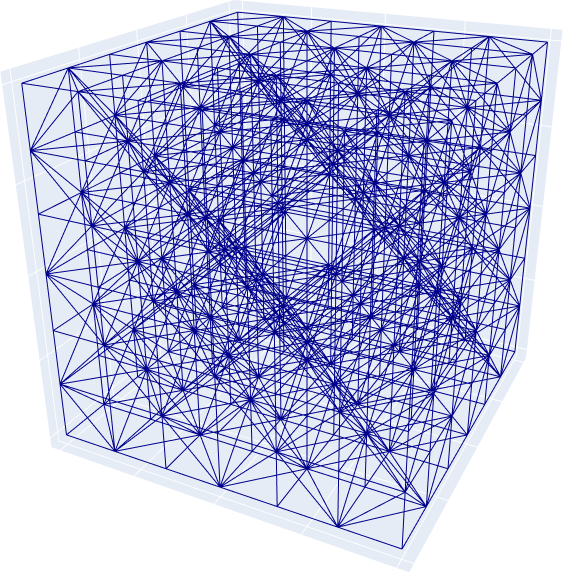}
		\caption{} 
	\end{subfigure}
	\begin{subfigure}{0.24\linewidth}
	\centering
		\includegraphics[width=1.0\linewidth]{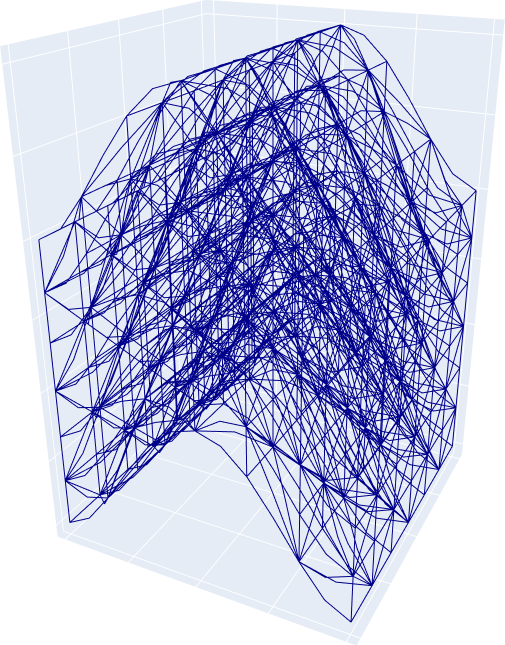}
		\caption{}
	\end{subfigure}
	\begin{subfigure}{0.24\linewidth}
	\centering
		\includegraphics[width=1.0\linewidth]{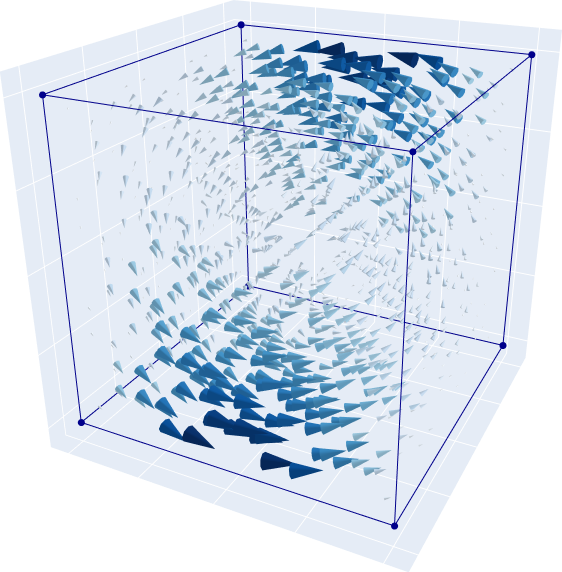}
		\caption{}
	\end{subfigure}
	\begin{subfigure}{0.24\linewidth}
	\centering
		\includegraphics[width=1.0\linewidth]{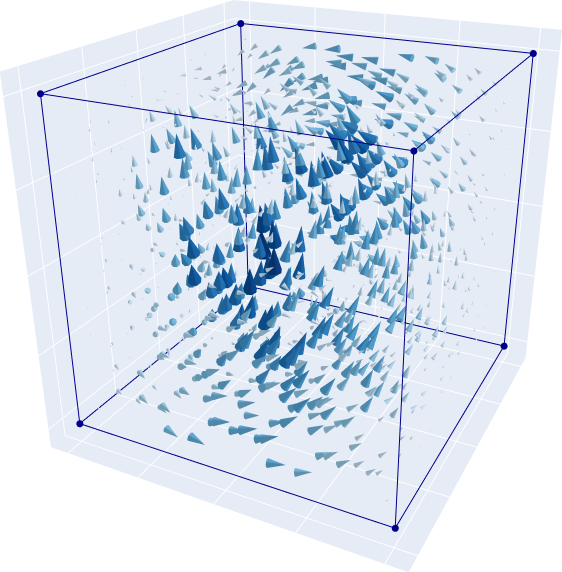}
		\caption{}
	\end{subfigure}
	\caption{(a) Initial geometry with 1080 elements, (b) displacement field, (c) first row of the microdistortion field with $\Lc = 10^{-6}$, (d) first row of the microdistortion field with $\Lc = 10^{-8}$.}
	\label{fig:lc0}
\end{figure}

\subsection{Robustness in $\Lc$}\label{ch:robust}
To test for robustness of the formulation in the characteristic length $\Lc$ we again consider $\Omega=[-1,\,1]^3$, the parameters
\begin{align}
    \lambda_\mathrm{e}= \mu_\mathrm{e}= \mu_\mathrm{c}= \lambda_\mathrm{micro}= \mu_\mathrm{micro}= \mu_\mathrm{macro} = 1 \, \quad \text{and} \quad \muc = 0,
\end{align}
and the following exact solution fields
\begin{align}
\label{eq:ex_sol_robustness_lc}
    \widetilde{\vb{u}} = \begin{bmatrix}
        0 \\ 0 \\ (1 - x)^2 \, (1 + x)
    \end{bmatrix} \, , && \widetilde{\bm{P}} = \begin{bmatrix}
        x(y^2-1) & y(x^2-1) & 0\\
        0 & y(z^2-1) & z(y^2-1) \\
        x(z^2-1) & 0 & z(x^2-1) 
    \end{bmatrix} + \frac{10}{\Lc^2}(1-x)(1-y)(1-z)\begin{bmatrix}
        -y & x & 0 \\
        0 & -z & y \\
        z & 0 & -x
    \end{bmatrix}\, ,
\end{align}
where the first part of $\widetilde{\bm{P}}$ is curl-free. The second term is scaled by $\Lc^{-2}$ to obtain a well-defined limit for $\Lc\to\infty$. The corresponding right-hand sides read
\begin{align*}
   \vb{f}=\left[\begin{matrix} x^{2} + 4 x z + 3 y^{2} - 4 \\4 x y + y^{2} + 3 z^{2} - 4\\- 9 x^{2} + 4y z + z^{2} \end{matrix}\right] +  \dfrac{1}{\Lc^2} \left[\begin{matrix}- 10 x^{2} z + 10 x^{2} - 10 x y + 10 x z + 30 y^{2} z - 30 y^{2} + 10 y z^{2} - 30 y z + 30 y - 10 z^{2}\\10 x^{2} z - 10 x^{2} - 10 x y^{2} + 10 x y + 30 x z^{2} - 30 x z + 10 y^{2} - 10 y z - 30 z^{2} + 30 z\\30 x^{2} y - 30 x^{2} + 10 x y^{2} - 30 x y - 10 x z + 30 x - 10 y^{2} - 10 y z^{2} + 10 y z + 10 z^{2}\end{matrix}\right],
\end{align*}
\begin{align*}
\bm{M} =& 2\left[\begin{matrix}x^2 z + 3 x y^2 + y z^2 & x^2 y& -2 x^3 + x z^2 \\ x^2 y  & x^2 z + x y^2 + 3 y z^2 &  y^2 z \\ -2 x^3 + x z^2 &  y^2 z & 3 x^2 z + x y^2 + y z^2 \end{matrix}\right] \\
    & +2\left[\begin{matrix}- 20 x z + 17 x - y + 14 z - 15 &20 y z - 21 y - 15 z + 15 & - 5 x^2+ 6 x + 5 y^2 - 5 y \\ - 5 y^2 + 4 y + 5 z^2 - 5 z & - 20 x y + 14 x + 17 y - z - 15 & 20 x z - 15 x- 21 z + 15 \\ 20 x y- 19 x - 15 y + 15 & 5 x^2 - 5 x- 5 z^2 + 4 z & - x - 20 y z + 14 y + 17 z - 15\end{matrix}\right]\\
&+ \dfrac{(x - 1) (y - 1) (z - 1)}{\Lc^2} \left[\begin{matrix}20  (x + 3 y + z) & - 20 x  & - 20 z  \\ - 20 x  & 20  (x + y + 3 z) &  - 20 y  \\ - 20 z  & - 20 y &20  (3 x + y + z)\end{matrix}\right].
\end{align*}

We use NGSolve in combination with the direct solver UMFPACK \cite{DD97} for inverting the arising stiffness matrix for the primal and mixed method. As can be clearly observed in Figure~\ref{fig:conv_robust_lc} for larger values of $\Lc\geq 10^6$ the primal method becomes unstable, whereas the mixed method still converges optimally.

\begin{figure}
	\centering
	\begin{subfigure}{0.48\linewidth}
		\begin{tikzpicture}
		\definecolor{npurple}{rgb}{0.4980392156862745,0.,1.}
			\begin{loglogaxis}[
				/pgf/number format/1000 sep={},
				axis lines = left,
				xlabel={Degrees of freedom (Log)},
				ylabel={$\| \bm{P} - \widetilde{\bm{P}}\|_{\Le}/\|\widetilde{\bm{P}}\|_{\Le}$ (Log)},
				xmin=0.2, xmax=3e6,
				ymin=8e-4, ymax=500,
				xtick={1,100,10000,1e6,1e8,1e10},
				ytick={1e-3,1e-2,1e-1,1, 10},
				legend pos= north west,
				ymajorgrids=true,
				grid style=dotted,
				]
\addplot[color=npurple, mark=triangle] coordinates {
(195, 1.9915419952627995)
(963, 0.480754609392379)
(5811, 0.11541240257863444)
(39843, 0.02826423509725097)
(293955, 0.007019310338891732)
};

\addlegendentry{$\Lc=10^{0}$}

\addplot[color=violet, mark=o] coordinates {
(195, 0.8708756676569545)
(963, 0.24013674618471556)
(5811, 0.062260074849321466)
(39843, 0.015738298230094882)
(293955, 0.0039466957822219175)
};
\addlegendentry{$\Lc=10^{1}$}
\addplot[color=teal, mark=square] coordinates {
(195, 0.7808998282887822)
(963, 0.2169988077695519)
(5811, 0.0562591184171324)
(39843, 0.014210620977286072)
(293955, 0.00356248391583371)
};
\addlegendentry{$\Lc=10^{2}$}
\addplot[color=blue, mark=pentagon] coordinates {
(195, 0.7809796479090094)
(963, 0.21702236519474136)
(5811, 0.05626143907495484)
(39843, 0.014210689141122822)
(293955, 0.0035624574716154704)
};
\addlegendentry{$\Lc=10^{3}$}
\addplot[color=cyan, mark=diamond] coordinates {
(195, 0.7809805541832068)
(963, 0.21702262892946128)
(5811, 0.05626146923046596)
(39843, 0.01421069154209374)
(293955, 0.0035624576352376314)
};
\addlegendentry{$\Lc=10^{4}$}
\addplot[color=red, mark=+] coordinates {
(195, 0.7809805632567549)
(963, 0.2170226318810565)
(5811, 0.05626146959573117)
(39843, 0.014210691506921587)
(293955, 0.0035624576203609846)
};
\addlegendentry{$\Lc=10^{5}$}
\addplot[color=magenta, mark=x] coordinates {
(195, 0.7809805633474917)
(963, 0.2170225739018566)
(5811, 0.056261471619002835)
(39843, 0.014210693989964203)
(293955, 0.035159003642851044)
};
\addlegendentry{$\Lc=10^{6}$}
\addplot[color=olive, mark=square*] coordinates {
(195, 0.7809805633483989)
(963, 0.21702898744037147)
(5811, 0.06170910596000579)
(39843, 134.11258212183648)
(293955, 4.455390279798243)
};
\addlegendentry{$\Lc=10^{7}$}
\addplot[color=pink, mark=*] coordinates {
(195, 0.7809805633484078)
(963, 0.217793077144513)
(5811, 0.05635278276362786)
(39843, 0.014214924477280863)
(293955, 773.3361951257252)
};
\addlegendentry{$\Lc=10^{8}$}
\addplot[color=darkgray, mark=diamond*] coordinates {
(195, 0.7809805633484082)
(963, 0.21799818538283436)
(5811, 0.0563460314288825)
(39843, 0.01421499960483926)
(293955, 8.23898178335574)
};

\addlegendentry{$\Lc=10^{9}$}

                \addplot[dashed,color=black, mark=none]
				coordinates {
					(200, 0.10234)
					(40000, 0.0029925)
				};
				
		\end{loglogaxis}
			\draw (3,1.4) node[anchor=north west]{$\mathcal{O}(h^2)$};
		\end{tikzpicture}
	\end{subfigure}
	\begin{subfigure}{0.48\linewidth}
		\begin{tikzpicture}
		\definecolor{npurple}{rgb}{0.4980392156862745,0.,1.}
			\begin{loglogaxis}[
				/pgf/number format/1000 sep={},
				axis lines = left,
				xlabel={Degrees of freedom (Log)},
				ylabel={$\| \bm{P} - \widetilde{\bm{P}}\|_{\Le}/\|\widetilde{\bm{P}}\|_{\Le}$ (Log)},
				xmin=0.2, xmax=3e6,
				ymin=8e-4, ymax=500,
				xtick={1,100,10000,1e6},
				ytick={1e-3,1e-2,1e-1,1, 10},
				legend pos= north west,
				ymajorgrids=true,
				grid style=dotted,
				]
		\addplot[color=npurple, mark=triangle] coordinates {
(270, 1.9915419952627988)
(1470, 0.4807546093923815)
(9558, 0.11541240257863805)
(68646, 0.028264235097255)
(519750, 0.007019310338895832)
};
\addlegendentry{$\Lc=10^{0}$}

\addplot[color=violet, mark=o] coordinates {
(270, 0.8708756676569542)
(1470, 0.240136746184716)
(9558, 0.06226007484932211)
(68646, 0.01573829823009569)
(519750, 0.003946695782222642)
};
\addlegendentry{$\Lc=10^{1}$}

\addplot[color=teal, mark=square] coordinates {
(270, 0.7808998282887825)
(1470, 0.2169988077695516)
(9558, 0.056259118417132235)
(68646, 0.014210620977286125)
(519750, 0.003562483915833717)
};
\addlegendentry{$\Lc=10^{2}$}

\addplot[color=blue, mark=pentagon] coordinates {
(270, 0.7809796479090094)
(1470, 0.21702236519480556)
(9558, 0.0562614390749509)
(68646, 0.014210689141119497)
(519750, 0.003562457471619781)
};
\addlegendentry{$\Lc=10^{3}$}

\addplot[color=cyan, mark=diamond] coordinates {
(270, 0.7809805541832068)
(1470, 0.21702262893101515)
(9558, 0.05626146923033806)
(68646, 0.014210691541984757)
(519750, 0.003562457635391576)
};
\addlegendentry{$\Lc=10^{4}$}

\addplot[color=red, mark=+] coordinates {
(270, 0.7809805632567549)
(1470, 0.217022631571193)
(9558, 0.056261469532586816)
(68646, 0.014210691566165331)
(519750, 0.0035624576370720993)
};
\addlegendentry{$\Lc=10^{5}$}

\addplot[color=magenta, mark=x] coordinates {
(270, 0.7809805633474917)
(1470, 0.2170226315975952)
(9558, 0.056261469535609336)
(68646, 0.014210691566407176)
(519750, 0.003562457637088906)
};
\addlegendentry{$\Lc=10^{6}$}

\addplot[color=olive, mark=square*] coordinates {
(270, 0.7809805633483989)
(1470, 0.21702263159785928)
(9558, 0.05626146953563964)
(68646, 0.014210691566409616)
(519750, 0.003562457637089072)
};
\addlegendentry{$\Lc=10^{7}$}

\addplot[color=pink, mark=*] coordinates {
(270, 0.7809805633484079)
(1470, 0.21702263159786195)
(9558, 0.05626146953563994)
(68646, 0.014210691566409613)
(519750, 0.003562457637089065)
};
\addlegendentry{$\Lc=10^{8}$}

\addplot[color=darkgray, mark=diamond*] coordinates {
(270, 0.7809805633484082)
(1470, 0.2170226315978619)
(9558, 0.056261469535639985)
(68646, 0.014210691566409585)
(519750, 0.0035624576370890705)
};

\addlegendentry{$\Lc=10^{9}$}

\addplot[dashed,color=black, mark=none]
				coordinates {
					(200, 0.10234)
					(40000, 0.0029925)
				};
				
		\end{loglogaxis}
			\draw (3,1.4) node[anchor=north west]{$\mathcal{O}(h^2)$};
		\end{tikzpicture}
	\end{subfigure}
	
	\caption{Convergence behavior for the quadratic sequence with primal (left) and mixed (right) method for $\Lc=10^0$, $10^1$, $\dots$, $10^9$.}
	\label{fig:conv_robust_lc}
\end{figure}
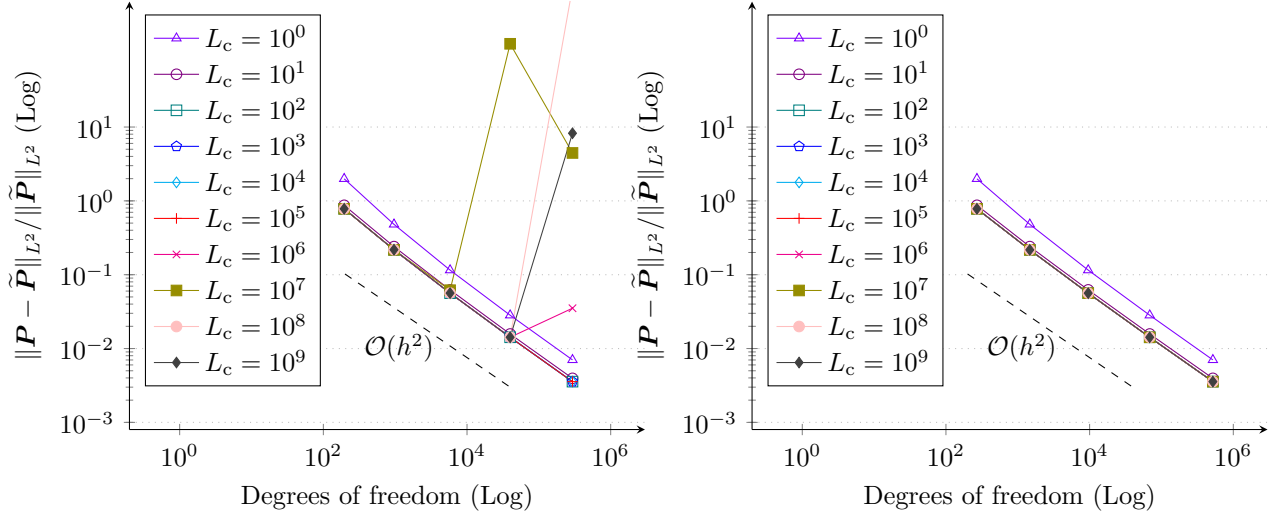

\subsection{Convergence for $\Lc \to \infty$}
Next, we consider the convergence to the limit solution $\Lc\to\infty$. Rewriting the exact solution \eqref{eq:ex_sol_robustness_lc} in the mixed formulation \eqref{eq:mixed_problem_limit} and taking the limit $\Lc\to\infty$ yields the following displacement, microdistortion, and hyperstress fields
\begin{align}
\begin{split}
    \widetilde{\vb{u}} &= \begin{bmatrix}
        0 \\ 0 \\ (1 - x)^2 \, (1 + x)
    \end{bmatrix} \, , \qquad \widetilde{\bm{P}} = \begin{bmatrix}
        x(y^2-1) & y(x^2-1) & 0\\
        0 & y(z^2-1) & z(y^2-1) \\
        x(z^2-1) & 0 & z(x^2-1) 
    \end{bmatrix}\,,\\ 
    \widetilde{\bm{D}}&=\Curl(10(1-x)(1-y)(1-z)\begin{bmatrix}
        -y & x & 0 \\
        0 & -z & y \\
        z & 0 & -x
    \end{bmatrix}) \\
    &=10\begin{bmatrix}
        x(1-x)(1-y) & y(1-x)(1-y) & -(z-1)(4xy - 3x - 3y + 2) \\
        -(x-1)(4yz-3y-3z+2) & y(1-y)(1-z) & z(1-y)(1-z) \\
        x(1-x)(1-z) & -(y-1)(4xz - 3x - 3z + 2) & z(1-x)(1-z)
    \end{bmatrix}\,.
\end{split}
\end{align}
Note, that $\Curl\widetilde{\bm{P}}=\bm{0}$ and $\Di\widetilde{\bm{D}}=\vb{0}$. The hyperstress field $\widetilde{\bm{D}}$ acts as Lagrange multiplier forcing the microdistortion tensor to be curl-free. By taking the limit $\Lc\to\infty$ we obtain directly the forces
\begin{align*}
\vb{f}=\left[\begin{matrix}x^{2} + 4 x z + 3 y^{2} - 4\\4 x y + y^{2} + 3 z^{2} - 4\\- 9 x^{2} + 4 y z + z^{2}\end{matrix}\right],
\end{align*}
\begin{align*}
    \bm{M}&=2\left[\begin{matrix}x^2 z + 3 x y^2 + y z^2 & x^2 y& -2 x^3 + x z^2 \\ x^2 y  & x^2 z + x y^2 + 3 y z^2 &  y^2 z \\ -2 x^3 + x z^2 &  y^2 z & 3 x^2 z + x y^2 + y z^2 \end{matrix}\right] \\
    &\quad +2\left[\begin{matrix}- 20 x z + 17 x - y + 14 z - 15 &20 y z - 21 y - 15 z + 15 & - 5 x^2+ 6 x + 5 y^2 - 5 y \\ - 5 y^2 + 4 y + 5 z^2 - 5 z & - 20 x y + 14 x + 17 y - z - 15 & 20 x z - 15 x- 21 z + 15 \\ 20 x y- 19 x - 15 y + 15 & 5 x^2 - 5 x- 5 z^2 + 4 z & - x - 20 y z + 14 y + 17 z - 15\end{matrix}\right].
\end{align*}

We consider five different structured tetrahedral grids consisting of 6, 48, 384, 3072, and 24576 elements and solve the primal and mixed formulation with NGSolve and UMFPACK using different values for $\Lc$. As depicted in Figure~\ref{fig:conv_lc_inf} the expected quadratic convergence rate in $\Lc$ \eqref{eq:mixed_method_conv_Lc} is observed until the discretization error is reached \eqref{eq:conv_lc_h}. For finer grids we can reduce this error. However, the primal method again becomes unstable for large values of $\Lc$. The mixed method, where also the limit problem with $\Lc=\infty$ is well defined, is again completely stable with respect to $\Lc$.

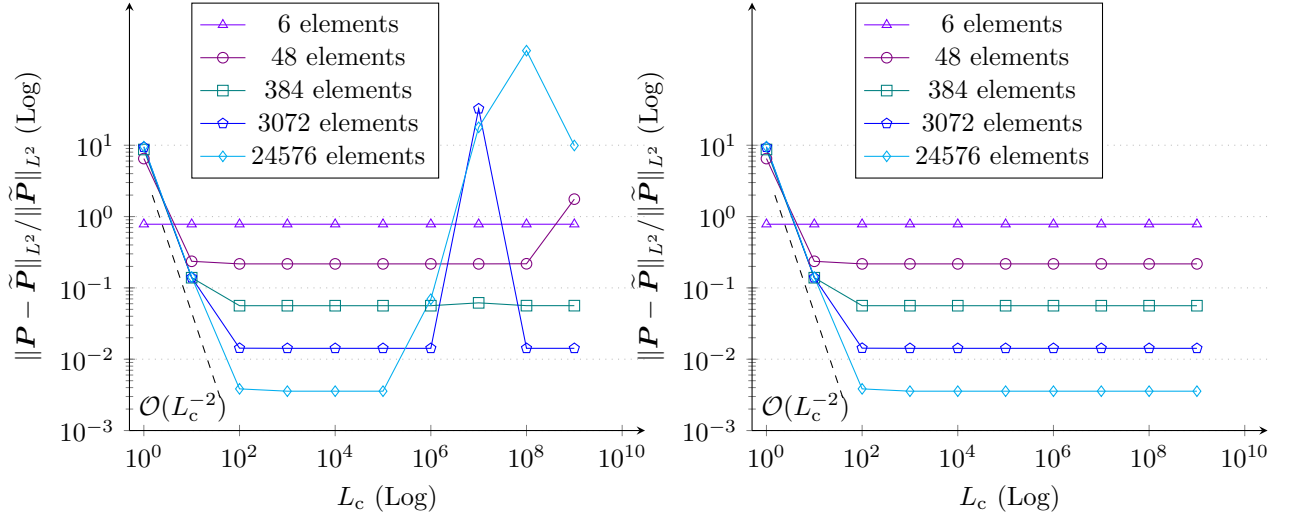
\begin{figure}
	\centering
	\begin{subfigure}{0.48\linewidth}
		\begin{tikzpicture}
		\definecolor{npurple}{rgb}{0.4980392156862745,0.,1.}
			\begin{loglogaxis}[
				/pgf/number format/1000 sep={},
				axis lines = left,
				xlabel={$\Lc$ (Log)},
				ylabel={$\| \bm{P} - \widetilde{\bm{P}}\|_{\Le}/\|\widetilde{\bm{P}}\|_{\Le}$ (Log)},
				xmin=0.5, xmax=3e10,
				ymin=1e-3, ymax=1000,
				xtick={1,100,10000,1e6,1e8,1e10},
				ytick={1e-3,1e-2,1e-1,1, 10},
				legend style={at={(0.12,1)},anchor= north west},
				ymajorgrids=true,
				grid style=dotted,
				]
 \addplot[color=npurple, mark=triangle] coordinates {
(1.0, 0.7809805633484082)
(10.0, 0.7809805633484082)
(100.0, 0.7809805633484082)
(1000.0, 0.7809805633484082)
(10000.0, 0.7809805633484082)
(100000.0, 0.7809805633484082)
(1000000.0, 0.7809805633484082)
(10000000.0, 0.7809805633484082)
(100000000.0, 0.7809805633484082)
(1000000000.0, 0.7809805633484082)
};
\addlegendentry{6 elements}

\addplot[color=violet, mark=o] coordinates {
(1.0, 6.500914561778541)
(10.0, 0.2359666797289093)
(100.0, 0.21703307018556786)
(1000.0, 0.2170227170319103)
(10000.0, 0.21702263244761005)
(100000.0, 0.217022631941475)
(1000000.0, 0.21702259200512808)
(10000000.0, 0.2170307188758028)
(100000000.0, 0.21779832206586303)
(1000000000.0, 1.7553209187973509)
};
\addlegendentry{48 elements}

\addplot[color=teal, mark=square] coordinates {
(1.0, 8.714126019947647)
(10.0, 0.13889519687670826)
(100.0, 0.05628020414721062)
(1000.0, 0.05626151321034957)
(10000.0, 0.0562614699569357)
(100000.0, 0.0562614695636524)
(1000000.0, 0.05626147370147688)
(10000000.0, 0.06170910596090014)
(100000000.0, 0.056352782763627714)
(1000000000.0, 0.05634603142888711)
};
\addlegendentry{384 elements}

\addplot[color=blue, mark=pentagon] coordinates {
(1.0, 9.39338974325679)
(10.0, 0.13932228788605225)
(100.0, 0.014280520965228012)
(1000.0, 0.014210711223422406)
(10000.0, 0.01421069169502626)
(100000.0, 0.014210691515335175)
(1000000.0, 0.01421069463028081)
(10000000.0, 32.49333292681344)
(100000000.0, 0.014214890608183837)
(1000000000.0, 0.014214910906482829)
};
\addlegendentry{3072 elements}

\addplot[color=cyan, mark=diamond] coordinates {
(1.0, 9.569771087088784)
(10.0, 0.14175687049251734)
(100.0, 0.003838814167414234)
(1000.0, 0.0035624896381953905)
(10000.0, 0.003562457673030267)
(100000.0, 0.0035624576232439523)
(1000000.0, 0.06941871562498571)
(10000000.0, 17.855184677065807)
(100000000.0, 211.91354461588156)
(1000000000.0, 9.917069002209198)
};
\addlegendentry{24576 elements}

                    \addplot[dashed,color=black, mark=none]
				coordinates {
					(1.5, 2)
					(40, 0.0028125)
				};
				
		\end{loglogaxis}
			\draw (0.0,0.0) node[anchor=south west]{$\mathcal{O}(\Lc^{-2})$};
		\end{tikzpicture}
	\end{subfigure}
	\begin{subfigure}{0.48\linewidth}
		\begin{tikzpicture}
		\definecolor{npurple}{rgb}{0.4980392156862745,0.,1.}
			\begin{loglogaxis}[
				/pgf/number format/1000 sep={},
				axis lines = left,
				xlabel={$\Lc$ (Log)},
				ylabel={$\| \bm{P} - \widetilde{\bm{P}}\|_{\Le}/\|\widetilde{\bm{P}}\|_{\Le}$ (Log)},
				xmin=0.5, xmax=3e10,
				ymin=1e-3, ymax=1000,
				xtick={1,100,10000,1e6,1e8,1e10},
				ytick={1e-3,1e-2,1e-1,1, 10},
				legend style={at={(0.2,1)},anchor= north west},
				ymajorgrids=true,
				grid style=dotted,
				]
				\addplot[color=npurple, mark=triangle] coordinates {
(1.0, 0.7809805633484082)
(10.0, 0.7809805633484082)
(100.0, 0.7809805633484082)
(1000.0, 0.7809805633484082)
(10000.0, 0.7809805633484082)
(100000.0, 0.7809805633484082)
(1000000.0, 0.7809805633484082)
(10000000.0, 0.7809805633484082)
(100000000.0, 0.7809805633484082)
(1000000000.0, 0.7809805633484082)
};
\addlegendentry{6 elements}
\addplot[color=violet, mark=o] coordinates {
(1.0, 6.5009145617784325)
(10.0, 0.2359666797289086)
(100.0, 0.2170330701855676)
(1000.0, 0.2170227170319554)
(10000.0, 0.21702263245030734)
(100000.0, 0.21702263160638607)
(1000000.0, 0.21702263159794716)
(10000000.0, 0.21702263159786275)
(100000000.0, 0.21702263159786195)
(1000000000.0, 0.2170226315978619)
};
\addlegendentry{48 elements}
\addplot[color=teal, mark=square] coordinates {
(1.0, 8.7141260199475)
(10.0, 0.13889519687670365)
(100.0, 0.05628020414721067)
(1000.0, 0.05626151321035158)
(10000.0, 0.0562614699580147)
(100000.0, 0.05626146953986233)
(1000000.0, 0.05626146953568219)
(10000000.0, 0.0562614695356404)
(100000000.0, 0.05626146953563997)
(1000000000.0, 0.05626146953563999)
};
\addlegendentry{384 elements}
\addplot[color=blue, mark=pentagon] coordinates {
(1.0, 9.393389743256703)
(10.0, 0.139322287886051)
(100.0, 0.014280520965230262)
(1000.0, 0.014210711223420084)
(10000.0, 0.014210691694932826)
(100000.0, 0.014210691567688038)
(1000000.0, 0.014210691566422368)
(10000000.0, 0.014210691566409715)
(100000000.0, 0.014210691566409597)
(1000000000.0, 0.014210691566409575)
};
\addlegendentry{3072 elements}
\addplot[color=cyan, mark=diamond] coordinates {
(1.0, 9.56977108708868)
(10.0, 0.1417568704924836)
(100.0, 0.0038388141674141627)
(1000.0, 0.003562489638201365)
(10000.0, 0.0035624576731761444)
(100000.0, 0.003562457637421535)
(1000000.0, 0.003562457637092395)
(10000000.0, 0.003562457637089097)
(100000000.0, 0.0035624576370890757)
(1000000000.0, 0.003562457637089065)
};
\addlegendentry{24576 elements}

\addplot[dashed,color=black, mark=none]
				coordinates {
					(1.5, 2)
					(40, 0.0028125)
				};
				
		\end{loglogaxis}
			\draw (0.0,0.0) node[anchor=south west]{$\mathcal{O}(\Lc^{-2})$};
		\end{tikzpicture}
	\end{subfigure}
	
	\caption{Convergence rates for $\Lc \to \infty$ with quadratic sequence for primal (left) and mixed (right) method with $1\times1\times1$, $2\times2\times2$, $\dots$, $16\times16\times16$ grids consisting of 6, 48, 384, 3072, and 24576 elements, respectively.}
	\label{fig:conv_lc_inf}
\end{figure}

\subsection{Comparison to the Cauchy continuum}\label{ssec:cauchy}
Considering \cref{eq:lame}, we recognize the meso parameters $\mathbb{C}_\mathrm{e}$ can be retrieved from the micro and macro parameters
\begin{align}
    \mue = \dfrac{\mumi\, \muma}{\mumi- \muma} \, , && 2\mue + 3 \lambda_\mathrm{e} = \dfrac{(2\mumi + 3\lambda_\mathrm{micro})(2\muma + 3\lambda_\mathrm{macro})}{(2\mumi + 3\lambda_\mathrm{micro})-(2\muma + 3\lambda_\mathrm{macro})} \, .
\end{align}
Assuming micro to be always stiffer than macro, we set the macro and micro parameters
\begin{align}
    \lambda_\mathrm{macro} = 115.4 \, , && \muma = 76.9 \, , && \lambda_\mathrm{micro} = 10 \, \lambda_\mathrm{macro} = 1154 \, , && \mumi = 10 \muma = 769 \, , 
\end{align}
and retrieve the meso parameters
\begin{align}
    \lambda_\mathrm{e} = 128.2 \, , && \mue = 85.4 \, .
\end{align}
Finally, we set $\muc = \mue$. The characteristic length $\Lc$ acts as a scaling parameter between highly homogeneous materials and materials with a pronounced micro-structure. Namely, for $\Lc = 0$ the continuum yields the result of classical linear elasticity with the macro parameters. 
We define the domain $\Omega = [-3,\,3] \times [-1,\,1]^2$, illustrating a bending beam (see \cref{fig:cauchy2}), and apply the constant volume load $\vb{f} = \begin{bmatrix}
    0 & 0 & -10
\end{bmatrix}^T$. The Dirichlet boundary conditions  
\begin{align}
    \widetilde{\vb{u}} = \vb{0} \, , &&  \widetilde{\Pm} \times \bm{\nu} = \D \widetilde{\vb{u}} \times \bm{\nu} = \bm{0} \, ,
\end{align}
are applied at $x = -3$ and $x = 3$.
The characteristic length $\Lc$ is varied between $10^3$ and $10^{-3}$ and the deviation from the linear elastic response of the Cauchy continuum is measured by $\|\vb{u}_\mathrm{Cauchy} - \vb{u}_\mathrm{relaxed}\|_{\Le}$ and the internal energy of each formulation. The internal energy of the relaxed micromorphic continuum is extracted from \cref{eq:1} and reads
\begin{align}
     I_\mathrm{relaxed} = \dfrac{1}{2} \int_{\Omega} &\langle \Ce \sym(\D \vb{u} - \bm{P}) , \, \sym(\D \vb{u} - \bm{P}) \rangle
		+  \langle \Cm \sym\bm{P} , \, \sym\bm{P} \rangle \notag \\ 
		& + \langle \Cc \skw(\D\vb{u} - \bm{P}) , \, \skw (\D \vb{u} - \bm{P}) \rangle
		+ \muma \Lc^2 \, \| \text{Curl}\bm{P} \|^2 \, \dd X \, .
\end{align}
For isotropic linear elasticity one finds
\begin{align}
    I_\mathrm{Cauchy} = \int_\Omega \dfrac{1}{2} \langle \mathbb{C}_\mathrm{macro} \, \sym \D \vb{u} , \, \sym \D \vb{u} \rangle \, \dd X \, , && \mathbb{C}_\mathrm{macro} = 2 \mu_{\mathrm{macro}}\Sy + \lambda_{\mathrm{macro}} \one \otimes \one \, .
\end{align}
The behaviour of the deviation in both the displacement and the energy in \cref{fig:cauchy} and \cref{fig:cauchy2} demonstrates the derivation in \cref{eq:tocauchy}. Further, it is apparent that this characteristic strongly depends on the approximation capacity of the finite subspace. In other words, the use of p-refinement greatly influences the result. Decreasing the element size (h-refinement) can also be used to alleviate the error. For example taking $19264$ elements with $87742$ degrees of freedom in the linear sequence with $\Lc = 10^{-3}$ results in
\begin{align}
    \dfrac{\| \vb{u}_\mathrm{Cauchy} - \vb{u}_\mathrm{relaxed}\|_{\Le}}{\| \vb{u}_\mathrm{Cauchy}\|_{\Le}}   = 0.095 \, , && I_\mathrm{Cauchy} = 81.549 \, , && I_\mathrm{relaxed} = 73.333 \, ,
\end{align}
improving over the result with $1031$ elements and $5890$ degrees of freedom
\begin{align}
    \dfrac{\| \vb{u}_\mathrm{Cauchy} - \vb{u}_\mathrm{relaxed}\|_{\Le}}{\| \vb{u}_\mathrm{Cauchy}\|_{\Le}}  = 0.289 \, , && I_\mathrm{Cauchy} = 72.922 \, , && I_\mathrm{relaxed} = 51.588 \, .
\end{align}
However, this does not compare to the order of improvement achieved with $1031$ elements with $15688$ degrees of freedom of the quadratic sequence
\begin{align}
    \dfrac{\| \vb{u}_\mathrm{Cauchy} - \vb{u}_\mathrm{relaxed}\|_{\Le}}{\| \vb{u}_\mathrm{Cauchy}\|_{\Le}}  = 0.035 \, , && I_\mathrm{Cauchy} = 82.932 \, , && I_\mathrm{relaxed} = 79.62 \, .
\end{align}
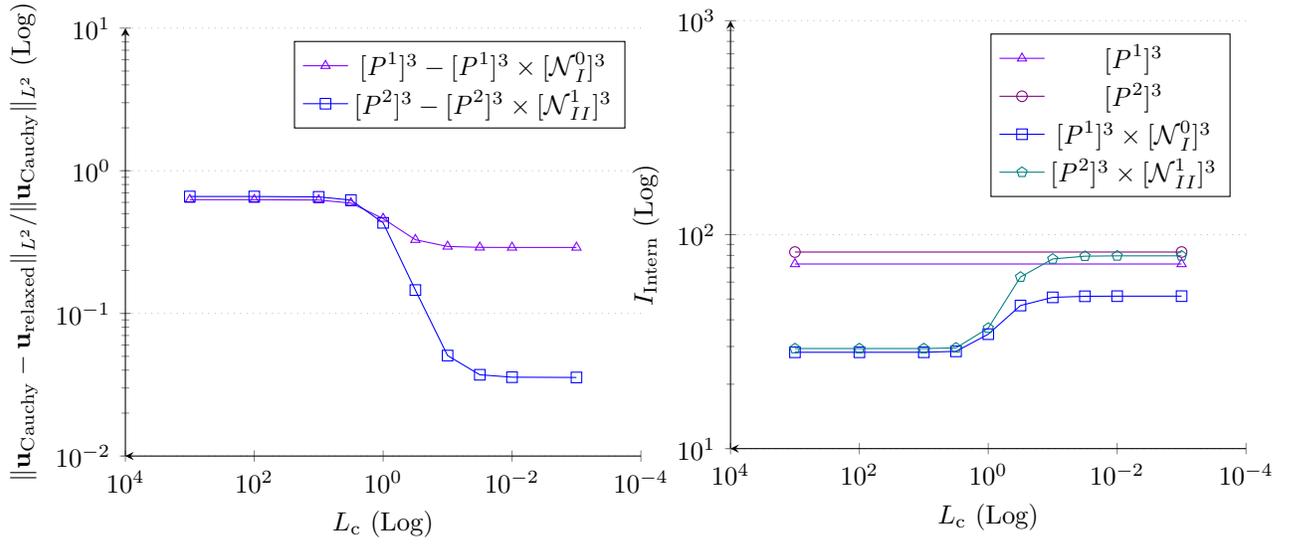
\begin{figure}
	\centering
	\begin{subfigure}{0.48\linewidth}
		\begin{tikzpicture}
		\definecolor{npurple}{rgb}{0.4980392156862745,0.,1.}
			\begin{loglogaxis}[
				/pgf/number format/1000 sep={},
				axis lines = left,
				xlabel={$\Lc$ (Log)},
				ylabel={$\| \vb{u}_\mathrm{Cauchy} - \vb{u}_\mathrm{relaxed}\|_{\Le} / \| \vb{u}_\mathrm{Cauchy}\|_{\Le}$ (Log)},
				xmin=1e-4, xmax=1e+4,
				ymin=1e-2, ymax=10,
				x dir=reverse,
				xtick={1e-4, 1e-2, 1, 1e+2, 1e+4},
				ytick={1e-2,1e-1,1, 10},
				legend pos= north east,
				ymajorgrids=true,
				grid style=dotted,
				]
				\addplot[
				color=npurple,
				mark=triangle,
				]
				coordinates {
					(1000, 2.120475577330363/ 3.3818612460148003)
					(100, 2.1203473397469788/ 3.3818612460148003)
					(10, 2.1077130530787125/ 3.3818612460148003)
					(3.1622776601683795, 2.0073775294829765/ 3.3818612460148003)
					(1, 1.5611046978271974/ 3.3818612460148003)
					(0.31622776601683794, 1.1107075851042856/ 3.3818612460148003)
					(1e-1, 0.9963905812264311/ 3.3818612460148003)
					(0.03162277660168379, 0.9819082282620919/ 3.3818612460148003)
					(1e-2, 0.9804104786991784/ 3.3818612460148003)
					(1e-3, 0.9802451336003354 / 3.3818612460148003)
				};
				\addlegendentry{$[\Po^1]^3 - [\Po^1]^3 \times [\Ned^0_{I}]^3$}
				\addplot[
				color=blue,
				mark=square,
				]
				coordinates {
					(1000, 2.5498689887854273/ 3.863220865924102)
					(100, 2.549714880877904/ 3.863220865924102)
					(10, 2.534432935238876/ 3.863220865924102)
					(3.1622776601683795, 2.4060727600330396/ 3.863220865924102)
					(1, 1.6665134891626427/ 3.863220865924102)
					(0.31622776601683794, 0.5630689696259054/ 3.863220865924102)
					(1e-1, 0.19564449196996236/ 3.863220865924102)
					(0.03162277660168379, 0.14345267469496642/ 3.863220865924102)
					(1e-2, 0.13794251994805096/ 3.863220865924102)
					(1e-3, 0.1373328751199808 / 3.863220865924102)
				};
				\addlegendentry{$[\Po^2]^3 - [\Po^2]^3 \times [\Ned^1_{II}]^3$}
			\end{loglogaxis}
		\end{tikzpicture}
	\end{subfigure}
	\begin{subfigure}{0.48\linewidth}
		\begin{tikzpicture}
		\definecolor{npurple}{rgb}{0.4980392156862745,0.,1.}
			\begin{loglogaxis}[
				/pgf/number format/1000 sep={},
				axis lines = left,
				xlabel={$\Lc$ (Log)},
				ylabel={$I_\mathrm{Intern}$ (Log)},
				xmin=1e-4, xmax=1e+4,
				ymin=10, ymax=1000,
				x dir=reverse,
				xtick={1e-4, 1e-2, 1, 1e+2, 1e+4},
				ytick={10,100, 1000},
				legend pos= north east,
				ymajorgrids=true,
				grid style=dotted,
				]
				\addplot[
				color=npurple,
				mark=triangle,
				]
				coordinates {
				    (1000, 72.92221764486239)
					(1e-3, 72.92221764486239)
				};
				\addlegendentry{$[\Po^1]^3$}
				\addplot[
				color=violet,
				mark=o,
				]
				coordinates {
					(1000, 82.93296273215226)
					(1e-3, 82.93296273215226)
				};
				\addlegendentry{$[\Po^2]^3$}
				\addplot[
				color=blue,
				mark=square,
				]
				coordinates {
				    (1000, 28.1969)
					(100, 28.1969)
					(10, 28.2007)
					(3.1622776601683795, 28.4785)
					(1, 34.2654)
					(0.31622776601683794, 46.6365)
					(1e-1, 50.9192)
					(0.03162277660168379, 51.5188)
					(1e-2, 51.5819)
					(1e-3, 51.5889)
				};
				\addlegendentry{$[\Po^1]^3 \times [\Ned^0_{I}]^3$}
				\addplot[
				color=teal,
				mark=pentagon,
				]
				coordinates {
				    (1000, 29.3596)
					(100, 29.3596)
					(10, 29.3622)
					(3.1622776601683795, 29.5733)
					(1, 36.6003)
					(0.31622776601683794, 63.3885)
					(1e-1, 76.9858)
					(0.03162277660168379, 79.3285)
					(1e-2, 79.5911)
					(1e-3, 79.6204)
				};
				\addlegendentry{$[\Po^2]^3 \times [\Ned^1_{II}]^3$}
			\end{loglogaxis}
		\end{tikzpicture}
	\end{subfigure}
	\caption{Displacement and energy comparison with the Cauchy continuum model with 1031 elements.}
	\label{fig:cauchy}
\end{figure}
\begin{figure}
	\centering
	\begin{subfigure}{0.3\linewidth}
	\centering
		\includegraphics[width=1.0\linewidth]{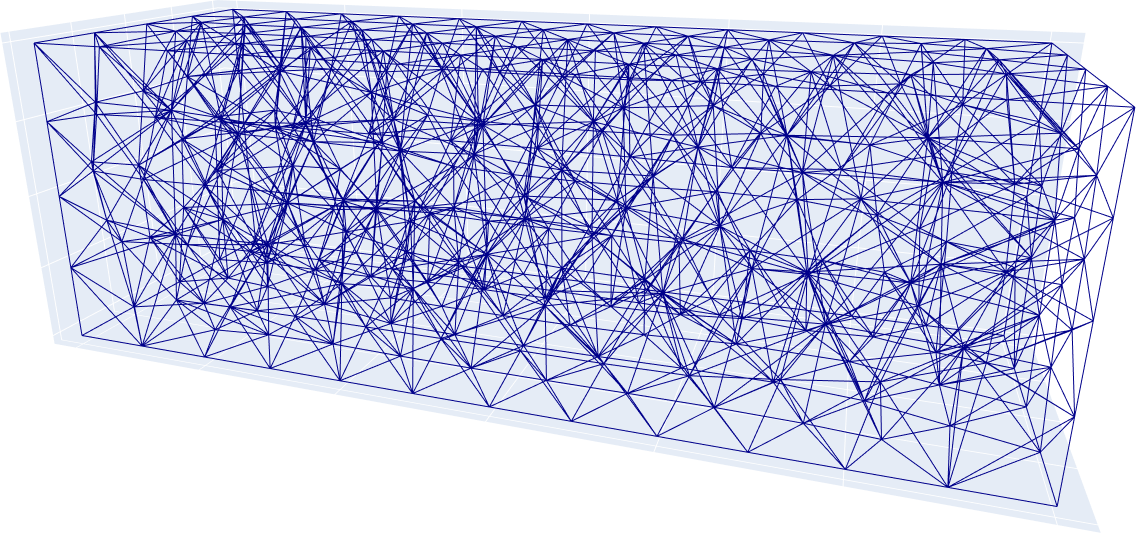}
		\caption{} 
	\end{subfigure}
	\begin{subfigure}{0.3\linewidth}
	\centering
		\includegraphics[width=1.0\linewidth]{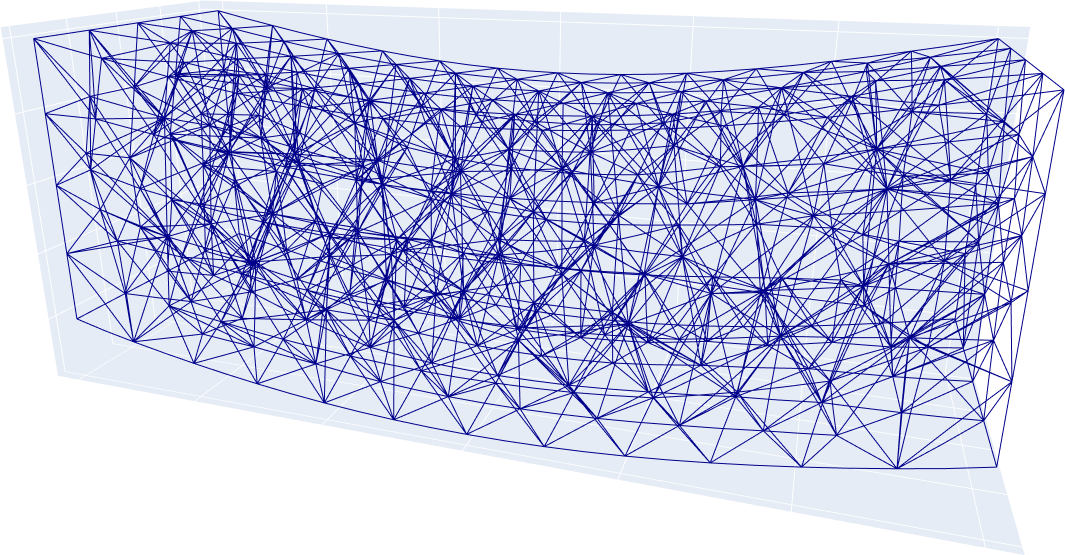}
		\caption{}
	\end{subfigure}
	\begin{subfigure}{0.3\linewidth}
	\centering
		\includegraphics[width=1.0\linewidth]{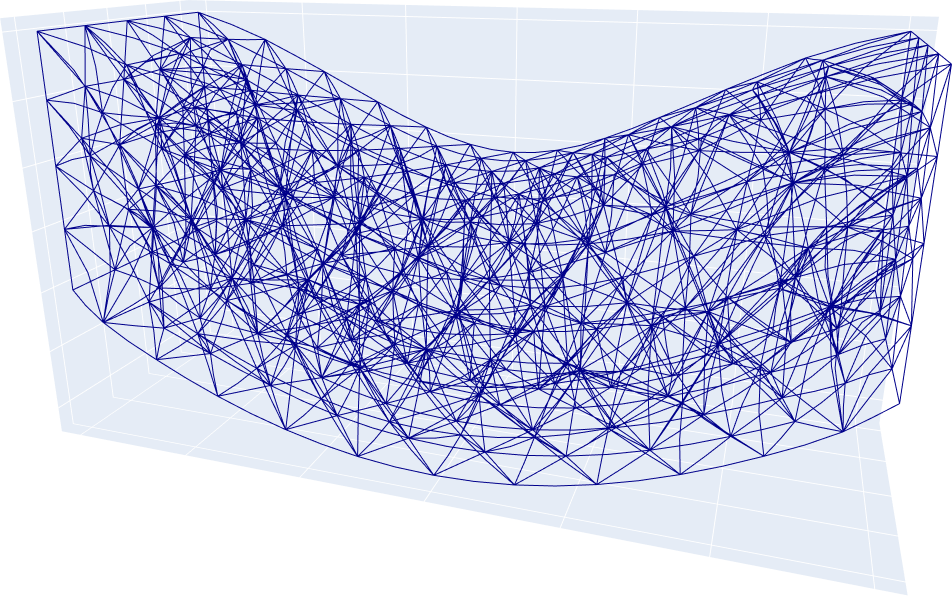}
		\caption{}
	\end{subfigure}
	\caption{(a) Initial geometry with 1031 elements, (b) displacement field of the quadratic element for $\Lc = 10^3$, (c) displacement field of the quadratic element for $\Lc = 10^{-3}$.}
	\label{fig:cauchy2}
\end{figure}

\subsection{Bounded stiffness}\label{ssec:bounded}
In the example in \cref{ssec:cauchy} one can observe the influence of the characteristic length $\Lc$ on the reaction of the system. In fact, the characteristic length acts as an interpolation parameter between the micro-stiffness and the macro-stiffness, as shown in \cref{re:tocmicro} and \cref{sec:lczero}. In order to demonstrate this phenomenon, we employ the axis-symmetric cubic domain $\Omega=[-1,\,1]^3$ and the material parameters from the previous example. The characteristic length $\Lc$ is varied in $[10^{-3}, \, 10^3]$. The Dirichlet boundary is set to 
\begin{align}
    &\widetilde{\vb{u}} = \begin{bmatrix}
        1+z \\ 0 \\ 0
    \end{bmatrix} \, , && \widetilde{\Pm} = \D \widetilde{\vb{u}} = \begin{bmatrix}
        0 & 0 & 1 \\
        0 & 0 & 0 \\
        0 & 0 & 0 \\
    \end{bmatrix} \, , && \Gamma_D = \partial\Omega \cap \{\vb{x} \in \partial\Omega \; | \; z = \pm 1\} \, .
\end{align}
Note that while the problem is reminiscent of simple shear \cite{Rizzi_shear}, it is different since we are using a finite volume and only the upper and lower parts of the boundary are constrained. The problem is defined entirely as a boundary value problem and no body forces or micro-moments are applied. In order to extract an indicator for the stiffness of the system we measure the
Lebesgue integral of the displacement reaction forces in x-direction on the upper Dirichlet boundary
\begin{align}
    & r_x = \int_{\Gamma_D^1}  \vb{t}_x \, \dd X   \, , && \Gamma_D^1 = \Gamma_D \cap \{\vb{x} \in \partial\Omega \; | \; z = 1\} \, ,
\end{align}
where $\vb{t}_x$ represents the equivalent traction in x-direction (see \cref{ap:B}).

Considering \cref{fig:boundedstiff}, it is clear that the linear finite element formulation largely overestimates the stiffness of the system by about a factor of $28\%$ in the lower limit for the finer discretization. Further, the effects of mesh refinement are considerable. From the depiction of the displacement field we recognize a transition from a nearly linear displacement field for large $\Lc$ values to a higher order displacement function for low $\Lc$ values. Lastly, we observe that the intensity of the microdistortion field (first row of $\Pm$) shifts from the centre to the sides of the domain for a decreasing characteristic length $\Lc$.

\begin{remark}
    The lower limit $\mathbb{C}_\mathrm{macro}$ was estimated using $6000$ finite elements of the quadratic sequence in the \textbf{primal} formulation while setting the characteristic length to $\Lc = 0$. 
    In order to approximate the upper limit $\Cm$, we make use of the quadratic sequence in the \textbf{mixed} formulation with $\Lc = 10^9$ and $6000$ elements, since the mixed formulation is stable for very large values of the characteristic length.
\end{remark}

\begin{figure}
    \centering
    \begin{subfigure}{0.48\linewidth}
	\centering
		\begin{tikzpicture}
		\definecolor{npurple}{rgb}{0.4980392156862745,0.,1.}
			\begin{semilogxaxis}[
				/pgf/number format/1000 sep={},
				axis lines = left,
				xlabel={$\Lc$ (Log)},
				ylabel={$r_x$},
				xmin=1e-4, xmax=1e+4,
				ymin=100, ymax=1000,
				x dir=reverse,
				xtick={1e-4, 1e-2, 1, 1e+2, 1e+4},
				ytick={100, 200,300, 400,500, 600},
				legend pos= north east,
				ymajorgrids=true,
				grid style=dotted,
				]
				\addplot[
				color=npurple,
				mark=triangle,
				]
				coordinates {
					(1000, 524.2246632493815) (100, 524.1954713321225) (10, 521.3208250090676) (3.1622776601683795, 498.6456938715164) (1, 403.3567628527211) (0.31622776601683794, 320.64492689528964) (0.1, 302.40300170156945) (0.01, 299.90129779994214) (0.001, 299.87534467312844)
				};
				\addlegendentry{$[\Po^1]^3 \times [\Ned^0_{I}]^3$ 1296 elements}
				\addplot[
				color=teal,
				mark=diamond,
				]
				coordinates {
					(1000, 520.7731466469695) (100, 520.7423268221748) (10, 517.7046632849662) (3.1622776601683795, 493.5697126029215) (1, 389.27350339497895) (0.31622776601683794, 295.82126739389986) (0.1, 274.74582996196386) (0.01, 271.75433882552886) (0.001, 271.7228173988496)
				};
				\addlegendentry{$[\Po^1]^3 \times [\Ned^0_{I}]^3$ 3072 elements}
				\addplot[
				color=blue,
				mark=square,
				]
				coordinates {
					(1000, 514.6406897962419) (100, 514.6081364383472) (10, 511.39376200460856) (3.1622776601683795, 485.4431177606226) (1, 365.5967508281445) (0.31622776601683794, 248.81729004622335) (0.1, 219.9216672315428) (0.01, 215.36114895167944) (0.001, 215.31129270609406)
				};
				\addlegendentry{$[\Po^2]^3 \times [\Ned^1_{II}]^3$ 1296 elements}
				\addplot[
				color=violet,
				mark=pentagon,
				]
				coordinates {
					(1000, 514.2648057577406) (100, 514.2317626733782) (10, 510.9694336213919) (3.1622776601683795, 484.6673274164019) (1, 363.96566314459443) (0.31622776601683794, 247.34046638459458) (0.1, 217.8793005193988) (0.01, 212.87278280585573) (0.001, 212.81546001093588)
				};
				\addlegendentry{$[\Po^2]^3 \times [\Ned^1_{II}]^3$ 3072 elements}
				\addplot[dashed,color=black, mark=none]
				coordinates {
					(1e+4, 514.0757380591552)
					(1e-4, 514.0757380591552)
				};
				\addplot[dashed,color=black, mark=none]
				coordinates {
					(1e+4, 211.4239248393674)
					(1e-4, 211.4239248393674)
				};
			\end{semilogxaxis}
			\draw (5.,2.6) node[anchor=north west]{$\Cm$};
			\draw (1.,1.25) node[anchor=north west]{$\mathbb{C}_\mathrm{macro}$};
		\end{tikzpicture}
		\caption{}
\end{subfigure}
\begin{subfigure}{0.48\linewidth}
   \centering
   \begin{subfigure}{0.48\linewidth}
   \centering
   \includegraphics[width=0.7\linewidth]{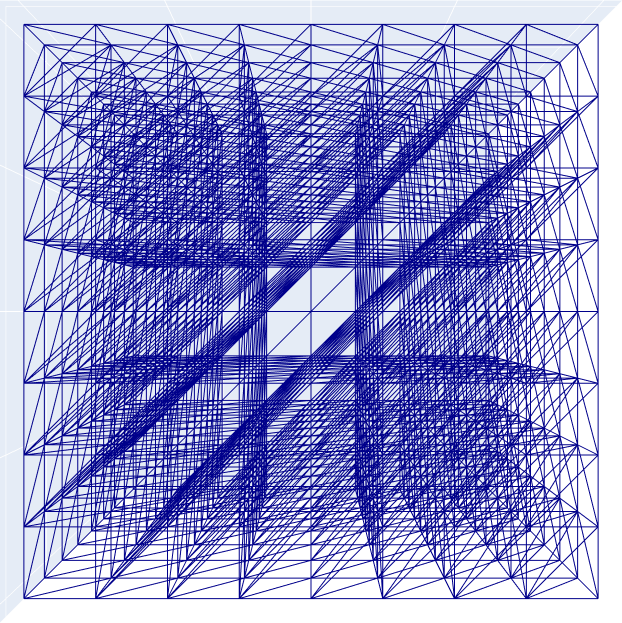}
   \caption{}
   \end{subfigure}
   \begin{subfigure}{0.48\linewidth}
   \centering
   \includegraphics[width=1.0\linewidth]{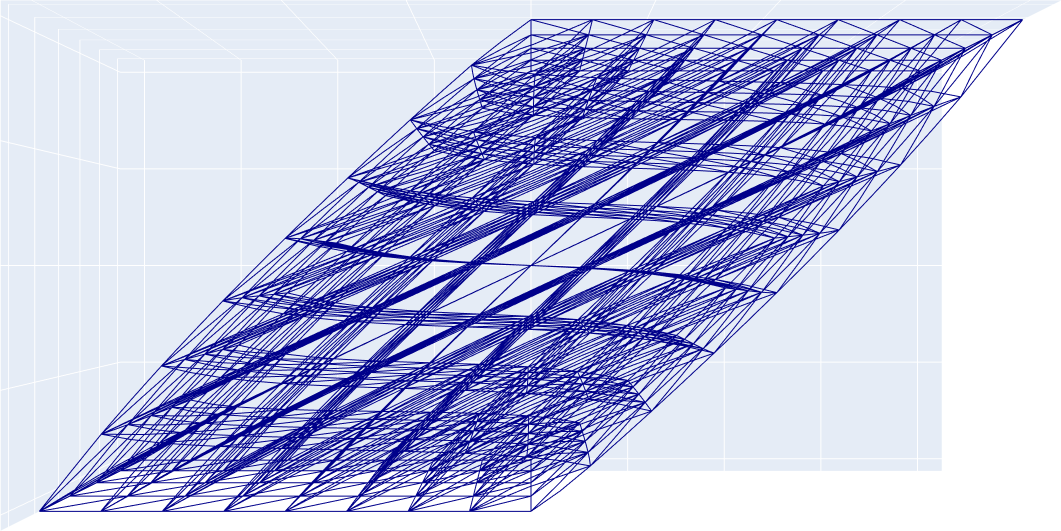}
   \caption{}
   \end{subfigure}
   \begin{subfigure}{0.48\linewidth}
   \centering
   \includegraphics[width=1.0\linewidth]{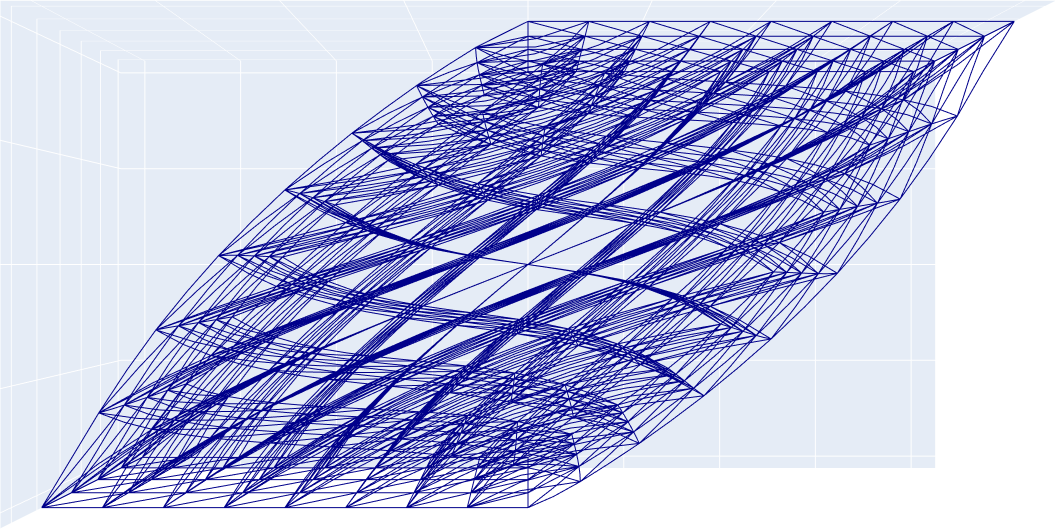}
   \caption{}
   \end{subfigure}
   \begin{subfigure}{0.48\linewidth}
   \centering
   \includegraphics[width=1.0\linewidth]{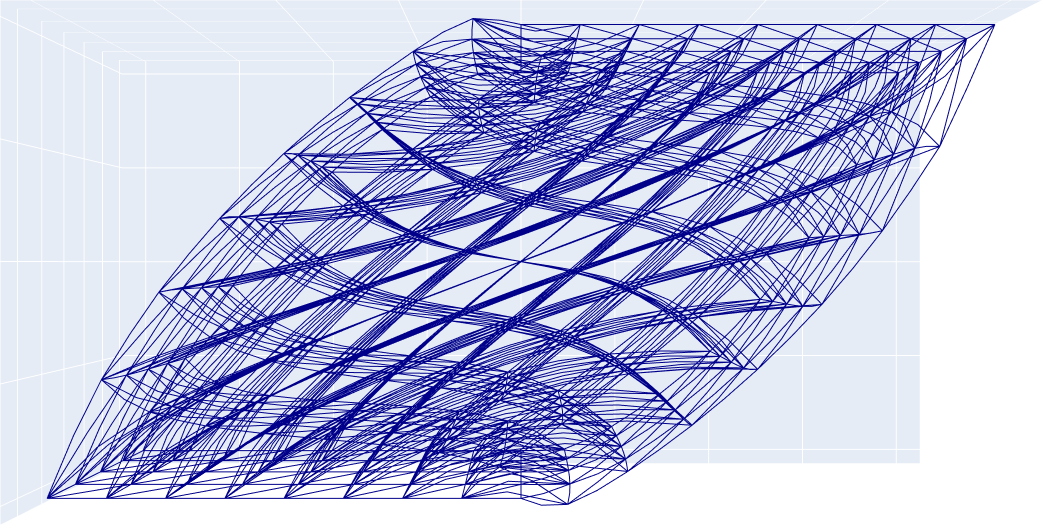}
   \caption{}
   \end{subfigure}
   \begin{subfigure}{0.48\linewidth}
   \centering
   \includegraphics[width=0.8\linewidth]{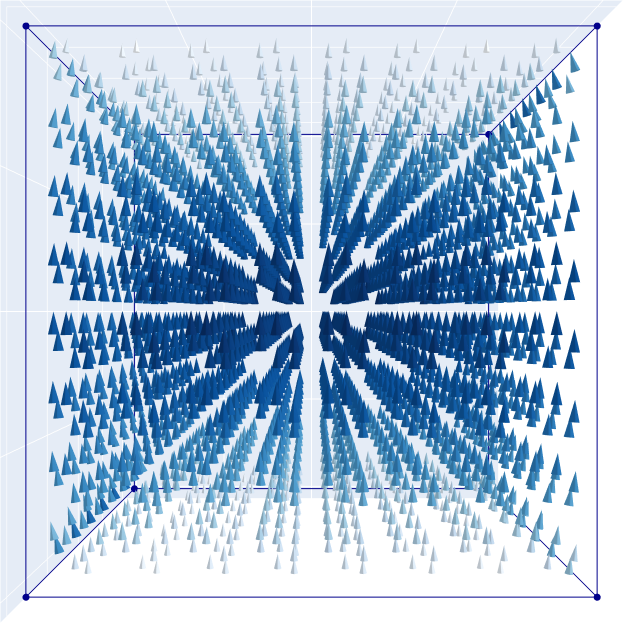}
   \caption{}
   \end{subfigure}
   \begin{subfigure}{0.48\linewidth}
   \centering
   \includegraphics[width=0.8\linewidth]{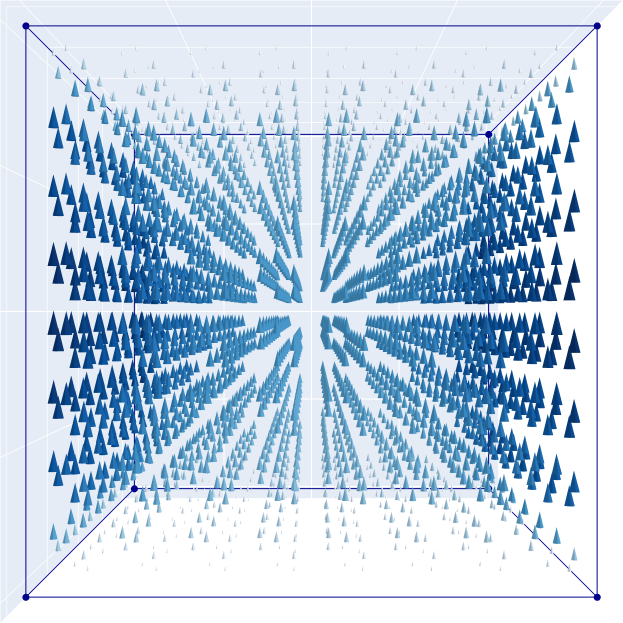}
   \caption{}
   \end{subfigure}
\end{subfigure}
\caption{(a) Reaction forces with varying characteristic lengths $\Lc$, (b) initial geometry with 3072 elements, (c) cutout view of the displacement field of the quadratic element with $\Lc = 10^3$, (d) cutout view of the displacement field of the quadratic element with $\Lc = 1$, (d) cutout view of the displacement field of the quadratic element with $\Lc = 10^{-3}$, (f) microdistortion field of the quadratic element with $\Lc = 10^3$, (g) microdistortion field of the quadratic element with $\Lc = 10^{-3}$.
}
\label{fig:boundedstiff}
\end{figure}

\section{Conclusions and outlook}\label{ch:8}
Existence and uniqueness in the primal formulation of the relaxed micromorphic model requires the employment of N\'ed\'elec elements, since $\Ned_I^p \, , \Ned_{II}^p \subset \Hc{}$. The construction of the lowest order N\'ed\'elec elements was demonstrated along with a solution to the arising orientation problem, such that no correction functions are necessary and a single reference element suffices.

While not being a mixed formulation, the use of the exact de Rham sequence is still recommended in order to exactly satisfy the discrete consistent coupling condition, as shown in \cref{ch:consist}. This is otherwise not possible for the general case if the exact sequence is not respected. 

The example in \cref{ssec:cauchy} depicted the behaviour of the relaxed micromorphic theory with $\Lc \to 0$ as given by the derivation in \cref{eq:tocauchy}.
Clearly, the characteristic length $\Lc$ is a crucial component of the theory as it governs the relation to the classical Cauchy continuum. Further, it is demonstrated that the latter comparison is strongly dependent on the approximation capacity of the finite subspace. Due to the relatively large error with respect to the expected result using the linear finite element sequence, $\| \vb{u}_\mathrm{Cauchy} - \vb{u}_\mathrm{relaxed}\|_{\Le} / \| \vb{u}_\mathrm{Cauchy}\|_{\Le} \geq 9.5 \% $, it is recommended to use at least quadratic or higher order finite elements. This conclusion is further supported by the example in \cref{ssec:bounded}, where the linear formulation largely overestimated the expected stiffness of the system for $\Lc \to \infty$.

While the primal formulation is relatively robust in $\Lc$, it may become unstable as $\Lc \to \infty$. As demonstrated in the example in \cref{ch:robust}, the computation can be re-stabilised by using a mixed formulation. As per the de Rham complex, this requires the usage of $\Hd{}$ elements, such as Raviart-Thomas $\RT$ or Brezzi-Douglas-Marini $\BDM$, and completely discontinuous finite elements. Further, for efficient simulations with higher order elements it is advantageous to split the $\Hd{}$ elements into source and solenoidal parts as the hyperstress field $\bm{D}$ is completely solenoidal, meaning the source part of the interpolant should be dropped or must be otherwise compensated. The construction of the lowest order Raviart-Thomas elements along with the lowest order discontinuous elements was demonstrated in \cref{ch:5} and \cref{ch:dis}, respectively. The orientation problem for the latter elements is circumvented by the same methodology as for the N\'ed\'elec elements. 

For further development we note the computation time increases rapidly with more elements and higher order polynomials. The development of an efficient preconditioner is to be considered for future works.


\section*{Acknowledgements}
Patrizio Neff acknowledges support in the framework of the DFG-Priority Programme
2256 “Variational Methods for Predicting Complex Phenomena in Engineering Structures and Materials”, Neff 902/10-1, Project-No.
440935806. Michael Neunteufel and Joachim Sch\"oberl acknowledge support by the Austrian Science Fund (FWF) project F65.
\label{sec:acknowledgements}

\bibliographystyle{spmpsci}   

\bibliography{Ref}   

\appendix
\section{Derivation of the strong form and analytical solutions} \label{ap:A}
The variation of the free energy functional with respect to the displacement field $\vb{u}$ reads
\begin{align}
    \delta_u I = \int_\Omega \langle \Ce \sym \D \delta  \vb{u} , \, \sym(\D \vb{u} - \Pm) \rangle + \langle \Cc \skw \D \delta  \vb{u} , \, \skw(\D \vb{u} - \Pm) \rangle - \langle \delta \vb{u} ,\, \vb{f} \rangle \, \dd X  \qquad \forall \, \delta \vb{u} \in \C^1_{\Gamma_D}(\Omega,\mathbb{R}^3) \, ,
\end{align}
where $\C^1_{\Gamma_D}(\Omega,\mathbb{R}^3)$ denotes the set of differentiable functions which are zero at the Dirichlet boundary.

By using the Green-type identity 
\begin{align}
    \int_\Omega \langle \D \vb{v} , \, \bm{T} \rangle \, \dd X &= \int_{\partial \Omega}  \langle \vb{v}, \, \bm{T} \bm{\nu}  \rangle \,  \dd A  - \int_{\Omega} \langle \vb{v} ,\, \Di \bm{T} \rangle \, \dd X \, , && \vb{v} \in \C^1(\Omega,\mathbb{R}^3) \, , \bm{T} \in \C^1(\Omega,\mathbb{R}^{3 \times 3}) \, ,
\end{align}
and splitting the boundary $\partial \Omega = \Gamma_D \cup \Gamma_N$, such that $\Gamma_D \cap \Gamma_N = \emptyset$, one finds
\begin{align}
    \delta_u I =& \int_{\Gamma_N^u} \langle \delta \vb{u} ,\, [\Ce \sym (\D \vb{u}- \bm{P}) + \Cc \skw (\D \vb{u} - \bm{P})] \, \bm{\nu} \rangle \, \dd A \notag \\ &-\int_\Omega \langle \delta \vb{u} , \, \Di[\Ce \sym (\D \vb{u} - \bm{P}) + \Cc \skw (\D \vb{u} - \bm{P})] - \vb{f} \rangle  \, \dd X  \, , \qquad \forall \, \delta \vb{u} \in \C^1_{\Gamma_D}(\Omega,\mathbb{R}^3) \, ,
    \label{eq:trac}
\end{align}
where $\vb{u}$ on $\Gamma_D^u$ is directly embedded in the space.

The variation of the energy with respect to the microdistortion field $\Pm$ reads
\begin{align}
    \delta_P I = \int_\Omega & - \langle \Ce \sym \delta \bm{P} , \, \sym(\D \vb{u} - \Pm) \rangle - \langle \Cc \skw \delta \bm{P} , \, \skw(\D \vb{u} - \Pm) \rangle + \langle \Cm \sym \delta \bm{P} , \, \sym\Pm \rangle \notag \\ 
    &+ \muma \Lc^2 \, \langle \Curl \delta \bm{P} , \, \Curl \Pm \rangle -\langle \delta \bm{P}, \, \bm{M} \rangle \, \dd X \, , \qquad \forall \, \delta \bm{P} \in \C^1_{\Gamma_D}(\Omega,\mathbb{R}^{3 \times 3}) \, .
\end{align}
Applying the Green-type identity for the Curl-operator
\begin{align}
    \int_\Omega \langle \Curl \bm{Q} , \, \bm{T} \rangle \, \dd X &= \int_{\partial \Omega} \langle \bm{Q} \, , \bm{T} \times \bm{\nu} \rangle \, \dd A + \int_\Omega \langle \bm{Q} , \, \Curl \bm{T} \rangle \, \dd X \, , && \bm{Q}, \, \bm{T} \in  \C^1(\Omega,\mathbb{R}^{3\times 3}) \, ,
\end{align}
along with the split of the boundary yields
\begin{align}
    \delta_P I =& -\int_\Omega \langle \delta \bm{P} \, , \Ce  \sym (\D \vb{u} - \Pm) + \Cc  \skw(\D \vb{u} - \Pm) - \Cm \sym \Pm - \muma \, \Lc ^ 2  \Curl\Curl\Pm + \bm{M} \rangle \, \dd X  \notag \\ &+ \muma \Lc^2 \int_{\Gamma_N^P} \langle \delta \bm{P} , \,  \Curl \Pm \times \,  \bm{\nu} \rangle \, \dd A \, , \qquad \forall \, \delta \bm{P} \in \C^1_{\Gamma_D}(\Omega,\mathbb{R}^{3 \times 3}) \, ,
\end{align}
where $\Pm \times \, \bm{\nu}$ on $\Gamma_D^P$ is directly embedded into the space. 

Consequently, the strong form reads
\begin{subequations}
\begin{align}
	-\Di[\Ce \sym (\D \vb{u} - \bm{P}) + \Cc \skw (\D \vb{u} - \bm{P})] &= \vb{f} && \text{in} \quad \Omega \, , \\
	-\Ce  \sym (\D \vb{u} - \Pm) - \Cc  \skw(\D \vb{u} - \Pm) + \Cm \sym \Pm + \muma \, \Lc ^ 2  \Curl\Curl\Pm &= \bm{M} && \text{in} \quad \Omega \, , \\
	\vb{u} &= \widetilde{\vb{u}} && \text{on} \quad \Gamma_D^u \, , \\
	\Pm \times \, \bm{\nu} &= \widetilde{\Pm} \times \bm{\nu} && \text{on} \quad \Gamma_D^P \, , \\
	[\Ce \sym (\D \vb{u}- \bm{P}) + \Cc \skw (\D \vb{u} - \bm{P})] \, \bm{\nu} &= 0 && \text{on} \quad \Gamma_N^u \, ,\\
	\Curl \Pm \times \, \bm{\nu}  &= 0  && \text{on} \quad \Gamma_N^P \, .
\end{align}
\end{subequations}

By pushing predefined displacement $\widetilde{\vb{u}}$ and microdistortion $\widetilde{\bm{P}}$ fields into the strong form we can derive corresponding forces and micro-moments
\begin{align}
    \vb{f} &= -\Di[\Ce \sym (\D \widetilde{\vb{u}} - \widetilde{\bm{P}}) + \Cc \skw (\D \widetilde{\vb{u}} - \widetilde{\bm{P}})] \, , \\
    \bm{M} &= -\Ce  \sym (\D \widetilde{\vb{u}} - \widetilde{\bm{P}}) - \Cc  \skw(\D \widetilde{\vb{u}} - \widetilde{\bm{P}}) + \Cm \sym \widetilde{\bm{P}} + \muma \, \Lc ^ 2  \Curl\Curl\widetilde{\bm{P}} \, .
\end{align}
Employing the latter forces and micro-moments in the domain while constraining the entire boundary with the prescribed fields
\begin{align}
    \Gamma_D = \partial \Omega \, , && \vb{u} \at_{\Gamma_D} = \widetilde{\vb{u}} \, , && \Pm \times \, \bm{\nu} \at_{\Gamma_D} = \widetilde{\Pm} \times \, \bm{\nu} \, ,
\end{align}
ensures the prescribed fields to be the analytical solution.

\section{Equivalent traction}\label{ap:B}
The traction on the Neumann boundary is defined using \cref{eq:trac} 
\begin{align}
    \vb{t} =  [\Ce \sym (\D \vb{u}- \bm{P}) + \Cc \skw (\D \vb{u} - \bm{P})] \, \bm{\nu} \, .
\end{align}
Consequently, the primal formulation can be rewritten with an equivalent traction on the Dirichlet boundary as 
\begin{align}
    a(\{\vb{u},\bm{P}\},\{\delta\vb{u},\delta\bm{P}\}) = l(\{\delta\vb{u},\delta\bm{P}\}) + \int_{\Gamma_D} \langle \delta\vb{u}, \, \vb{t} \rangle \, \dd A \qquad  \forall\, \{\delta\vb{u},\delta\bm{P}\}\in [\Hone(\Omega)]^3\times \HC{, \Omega} \, ,
\end{align}
from which the equivalent traction can be extracted \begin{align}
    \int_{\Gamma_D} \langle \delta\vb{u}, \, \vb{t} \rangle \, \dd A = a(\{\vb{u},\bm{P}\},\{\delta\vb{u},\delta\bm{P}\}) - l(\{\delta\vb{u},\delta\bm{P}\})\, .
\end{align}
The extraction is achieved by redefining the test function to unity on the upper Dirichlet boundary in the x-direction
\begin{align}
    \delta \vb{u} = \vb{v} = \left \{ \begin{matrix}
        \vb{e}_x & \text{on} & \Gamma_D^1 \\
        \bm{\psi} & \text{otherwise} & 
    \end{matrix} \right . \, ,
\end{align}
where the vector $\bm{\psi} \in [\Hone(\Omega)]^3$ is arbitrary. 
The discrete form now reads
\begin{align}
    \int_{\Gamma_D} \langle \vb{v}^h, \, \vb{t} \rangle \, \dd A = a(\{\vb{u}^h,\bm{P}\},\{\vb{v}^h,\delta\bm{P}\}) - l(\{\vb{v}^h,\delta\bm{P}\})\, ,
\end{align}
resulting in the corresponding scalar product
\begin{align}
    \int_{\Gamma_D} \langle \vb{v}^h, \, \vb{t} \rangle \, \dd A = \langle \bm{K} \vb{u}^d , \, \vb{v}^d \rangle - \langle \vb{f}^d , \, \vb{v}^d \rangle \, ,  
\end{align}
where $\vb{u}^d$ and $\vb{v}^d$ are the discrete vectors of the node values of the trial and test functions, respectively. The vector of the discrete values of the body forces and micro-moment is given by $\vb{f}^d$. The vector $\vb{v}^d$ is now defined, such that its nodal values reflect a unity function in the x-direction on the upper Dirichlet boundary. 
\begin{remark}
    Note that using the latter, no new computation of the stiffness matrix $\bm{K}$, the nodal values vector $\vb{u}^d$ or the force vector $\vb{f}^d$ is required.
\end{remark}

\end{document}